\documentclass[]{article}
\usepackage[shortlabels]{enumitem}
\usepackage{todonotes}
\usepackage{graphicx}
\usepackage{amsmath, amssymb, amsthm}
\usepackage{bbm}
\usepackage{fullpage}

\usetikzlibrary{calc,shapes, backgrounds}
\tikzset{every node/.style={draw, fill=white, circle, minimum size = 4pt, inner sep = 0pt}}
\newcommand{\resizetikz}[2]{
	{\tikzset{every picture/.style={scale=#1}}
		#2}}

\DeclareMathOperator{\diam}{diam}

\def\Int{\textrm{Int}}
\def\Ext{\textrm{Ext}}

\usepackage[style=numeric, backend=bibtex8]{biblatex}
\bibliography{refs}
\AtEveryBibitem{\clearfield{issn}}
\AtEveryBibitem{\clearfield{doi}}
\AtEveryBibitem{\clearfield{url}}

\allowdisplaybreaks

\newtheorem{lemma}{Lemma}[section]
\newtheorem{theorem}{Theorem}[section]
\newtheorem{corollary}{Corollary}[section]
\newtheorem{definition}{Definition}[section]

\newtheorem{conjecture}{Conjecture}[section]

\newtheorem{observation}{Observation}

\title{On the classification of regular graphs with positive Lin-Lu-Yau curvature}
\author{George Brooks \thanks{University of South Carolina, Columbia, SC, USA. ({\tt ghbrooks@email.sc.edu}). The author is partially supported by NSF DMS 2038080 grant.}
\and 
Nathanael Johnson \thanks{The Ohio State University, Columbus, OH, USA. ({\tt nathan.erikson.9701@gmail.com}). The author is partially supported by NSF DMS 2038080 grant through a summer REU program.}
\and
William Linz \thanks{University of Memphis, Memphis, TN, USA. ({\tt wlinz1@memphis.edu}). The author was partially supported by NSF DMS 2038080 grant while at University of South Carolina.}
\and 
Xiaonan Liu \thanks{Louisiana State University, LA, USA. ({\tt xliu20@lsu.edu}).}
\and
Linyuan Lu \thanks{University of South Carolina, Columbia, SC, USA. ({\tt lu@math.sc.edu}). The author is partially supported by NSF DMS 2038080 grant.}
\and
Wynton Vaughn \thanks{Clemson University, Clemson, SC, USA. ({\tt wyntonv@g.clemson.edu}). The author is partially supported by NSF DMS 2038080 grant through a summer REU program.}}
\date{\today}

\begin{document}

\maketitle

\begin{abstract}
We prove several new structural and classification results about $d$-regular graphs with positive Lin--Lu--Yau (LLY) curvature. We show that any positively curved $d$-regular graph has diameter at most $2d-2$, which improves the previously best known diameter bound obtained from the Bonnet-Myers-type theorem for positively curved graphs. We further show that every positively curved $d$-regular graph is $3$-connected. We classify all positively curved $3$-regular graphs, as well as all positively curved regular planar graphs.  
\end{abstract}

\section{Introduction}

Ricci curvature has long been an important tool in the study of Riemannian manifolds. Many different discrete analogues of the Ricci curvature for graphs and metric spaces have been proposed and studied; see \textit{e.g.} \cite{BE1985}, \cite{F2003}, \cite{O2009}. Lin, Lu and Yau~\cite{LLY2011} defined a notion of discrete Ricci curvature (now called LLY curvature) on the vertex pairs of a graph $G$ (see Section 2.3 for precise definitions). Throughout this paper, a graph $G$ is called \textit{positively curved} if it has positive LLY curvature on every vertex pair of $G$. Unless specified otherwise, we assume that the graph $G$ is simple and connected.
 
 In this paper, we prove several new results about the connectivity and diameter of $d$-regular graphs with positive LLY curvature. We also completely classify all $3$-regular graphs with positive LLY curvature, as well as all positively curved regular planar graphs.  

\subsection{Positively curved $3$-regular graphs}

There has been much interest in classifying various families of graphs satisfying some condition on LLY curvature. For example, Cushing \textit{et al.} \cite{CKLLLY2021, CKLLLY2021err} classified all $3$-regular Ricci-flat graphs with girth at least $5$ (a graph is \emph{Ricci-flat} if it has LLY curvature $0$ on every edge). We can classify all $3$-regular graphs with positive LLY curvature. 

\begin{theorem}\label{alld3}
There are six positively curved $3$-regular graphs: $K_4$,  $C_3\square P_2$,  $C_4\square P_2$, $C_5\square P_2$, the Wagner graph and $K_{3, 3}$.
\end{theorem}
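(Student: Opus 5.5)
The plan has two halves: a finite check that each of the six listed graphs is positively curved, and a structural argument that no other connected $3$-regular graph is.

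For the verification, it suffices to check edges. Positive curvature on all edges implies positive curvature on all pairs, since the LLY curvature satisfies $\kappa(x,y)\ge \min$ of the edge curvatures along a geodesic, by the triangle inequality for the Wasserstein distance. For each of the six graphs I will use vertex-transitivity, or the two edge orbits of a prism (rung edges and cycle edges), and exhibit explicit matchings or transport plans. The following quantities are useful:
\begin{itemize}
\item $K_4$ has edge curvature $4/3$.
\item $K_{3,3}$ has edge curvature $2/3$.
\item $C_4\square P_2=Q_3$ has edge curvature $2/3$.
\item For the prisms $C_3\square P_2$ and $C_5\square P_2$, and for the Wagner graph, I will compute lower bounds via Münch--Wojciechowski's formula $\kappa(x,y)=\frac{1}{d}\bigl(d+1-\text{(min-cost matching term)}\bigr)$.
\end{itemize}
An explicit transport plan of total cost below $1$ in the lazy-walk limit gives positivity in each case.

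For the converse, let $G$ be a positively curved $3$-regular graph. I will use local structure around an edge $xy$. With $d=3$, the curvature formula shows that $\kappa(x,y)>0$ forces each edge to lie in a triangle or in a $4$-cycle (or a $5$-cycle) that allows a transport plan of cost less than the trivial one. More precisely, if $N(x)\setminus\{y\}=\{x_1,x_2\}$ and $N(y)\setminus\{x\}=\{y_1,y_2\}$, positivity requires that the minimum cost matching between $\{x_1,x_2\}$ and $\{y_1,y_2\}$ has cost at most $2$. Since each pair is at distance at most $3$, this means at least one pair is equal (a triangle) or adjacent (a $4$-cycle), or the total cost is at most $2$ through such configurations.

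I will then split into cases:
\begin{enumerate}
\item If $G$ contains a triangle, analyze its neighborhood. Either $G=K_4$, or the triangle's three outgoing edges must each lie in short cycles. Propagating this forces the structure $C_3\square P_2$.
\item If $G$ is triangle-free, every edge lies in a $4$-cycle or satisfies the cost-$2$ condition via $5$-cycles. I will show that $G$ is then a prism $C_n\square P_2$ or a Möbius ladder, or is $K_{3,3}$ (the Möbius ladder on $6$ vertices) or $Q_3$.
\end{enumerate}
To bound $n$, I will use the diameter bound $2d-2=4$ proved in this paper, together with a direct curvature computation. Rung edges of $C_n\square P_2$ for $n\ge 6$ have curvature $0$, and the Möbius ladder on $2n$ vertices with $n\ge 5$ likewise has nonpositive curvature on some edge. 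This leaves $C_5\square P_2$ and the Wagner graph (the Möbius ladder on $8$ vertices).

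The main obstacle is the triangle-free case in which some edges lie in no $4$-cycle and positivity comes only from $5$-cycles, as in the rung edges of $C_5\square P_2$ and the Wagner graph. There I would first show that every vertex lies in a $4$-cycle or in a $5$-cycle structure forcing a ladder. I would then grow the ladder edge by edge using the local condition, and close it up using finiteness and the diameter bound $\le 4$, which limits the number of vertices to at most $1+3+6+12+24$, though in practice the ladder closes much sooner. A computer-free case check of the closing configurations (a straight closure versus a twisted closure) then yields exactly the listed graphs.
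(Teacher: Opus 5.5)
Your verification half is fine: the reduction to edges and the values $4/3$, $2/3$, $2/3$ are correct. The converse, however, has a genuine gap, and it starts with the local criterion meant to drive it.

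\textbf{The threshold is wrong.} For $d=3$ the Li--Lu formula gives $\kappa(x,y)=(4-M)/3$, where $M$ is the optimal matching cost between $N(x)\setminus N[y]$ and $N(y)\setminus N[x]$. So positivity means $M\le 3$, not $M\le 2$; your own displayed formula already says this. The difference matters in three places.
\begin{itemize}
\item It would discard two of the six graphs. For a rail edge $a_1a_2$ of $C_5\square P_2$ you match $\{a_5,b_1\}$ to $\{a_3,b_2\}$, and the optimal cost is $d(b_1,b_2)+d(a_5,a_3)=1+2=3$. The cycle edges of the Wagner graph also have cost exactly $3$.
\item Your ``main obstacle'' does not exist. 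With the correct threshold, if no vertex of $N(x)\setminus\{y\}$ equals or is adjacent to a vertex of $N(y)\setminus\{x\}$, every matching costs at least $4$. Hence every edge lies in a $C_3$ or $C_4$ (Lemma~\ref{lem:dregplanc3c4}), so there are no edges outside all $4$-cycles that are rescued by $5$-cycles. The rung edges of $C_5\square P_2$ and of the Wagner graph lie in $4$-cycles and have cost $2$. A $5$-cycle only ever supplies the distance-$2$ pair in a cost $1+2$ matching.
\item Your elimination step names the wrong edges. In $C_n\square P_2$ with $n\ge 6$ the rung edges have curvature $2/3$. It is the rail edges, with cost $d(b_1,b_2)+d(a_n,a_3)=1+3=4$, that are flat.
\end{itemize}

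\textbf{The ladder step is only asserted.} More seriously, the heart of the triangle-free case, that such a graph is a prism or a M\"obius ladder, is never argued. ``Grow the ladder edge by edge'' is exactly what has to be proved. You would need to show that the $4$-cycles through each vertex fit together as in a ladder, and rule out configurations such as two vertices with three common neighbors that do not close up into $K_{3,3}$. The Moore bound of $46$ vertices gives no feasible hand check without that structure.

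The paper never establishes a ladder structure. Instead it splits on whether $G$ contains a $C_5$.
\begin{itemize}
\item With no $C_3$ and no $C_5$, no vertex of $N(x)\setminus\{y\}$ is at distance $0$ or $2$ from a vertex of $N(y)\setminus\{x\}$. So $M\le 3$ forces two adjacent pairs and $\kappa\equiv 2/3$ on every edge. The Gamlath--Liu--Lu--Yuan bound $n\le 2^{2/\kappa}=8$ then reduces everything to a finite check, giving $K_{3,3}$ and $Q_3$.
\item With a $5$-cycle $C$, one has $|N(C)|\in\{3,4,5\}$. Applying the $4$-cycle condition and $M\le 3$ to the edges of $C$ yields, respectively, the Wagner graph, a degree contradiction, and $C_5\square P_2$.
\end{itemize}
Your triangle case matches the paper's Cases 1--2 in outline, but it too must be rerun with the threshold $3$.
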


\begin{figure}[h]
\begin{center}
    \resizetikz{0.6}{\begin{tikzpicture}[scale=1]
  
  \foreach \x/\num in {a/0,b/1,c/2} {
    \node (\num) at ({120*\num+210}:1.5) {};  }
    \node (3) at (0:0) {};
    \foreach \i in {0,1,2,3}{
        \foreach \j in {0,1,2,3}{
            \ifnum \i < \j
            \draw (\i) -- (\j);
            \fi
        }
    }
\end{tikzpicture}}
    \resizetikz{0.6}{\begin{tikzpicture}[scale=1]

\node (0) at (210:0.5) {};
\node (1) at (330:0.5) {};
\node (2) at (90:0.5) {};
\node (3) at (210:1.5) {};
\node (4) at (330:1.5) {};
\node (5) at (90:1.5) {};
\foreach \i in {0,1,2}{
\foreach \j in {0,1,2}{
\ifnum \i < \j
\draw (\i) -- (\j);
\fi
}
}
\foreach \i in {3,4,5}{
\foreach \j in {3,4,5}{
\ifnum \i < \j
\draw (\i) -- (\j);
\fi
}
}
\draw (0) -- (3);
\draw (1) -- (4);
\draw (2) -- (5);
\end{tikzpicture}}
    \resizetikz{0.75}{\begin{tikzpicture}[scale=1]
\foreach \k in {0,...,7}{
    \ifnum \k<4
        \node (\k) at ({45+90*\k}:0.5) {};
    \else
        \node (\k) at ({45+90*\k}:1.25) {};
    \fi
}
\draw (0) -- (1);
\draw (1) -- (2);
\draw (2) -- (3);
\draw (3) -- (0);
\draw (4) -- (5);
\draw (5) -- (6);
\draw (6) -- (7);
\draw (7) -- (4);
\draw (0) -- (4);
\draw (1) -- (5);
\draw (2) -- (6);
\draw (3) -- (7);
\end{tikzpicture}}
    \resizetikz{0.75}{\begin{tikzpicture}[scale=1]
\foreach \k in {0,...,9}{
    \ifnum \k<5
        \node (\k) at ({90+72*\k}:0.4) {};
    \else
        \node (\k) at ({90+72*\k}:1) {};
    \fi
}
\draw (0) -- (1);
\draw (1) -- (2);
\draw (2) -- (3);
\draw (3) -- (4);
\draw (4) -- (0);
\draw (5) -- (6);
\draw (6) -- (7);
\draw (7) -- (8);
\draw (8) -- (9);
\draw (9) -- (5);
\draw (0) -- (5);
\draw (1) -- (6);
\draw (2) -- (7);
\draw (3) -- (8);
\draw (4) -- (9);
\end{tikzpicture}}
    \resizetikz{0.75}{\begin{tikzpicture}[scale = 1]
\foreach \k in {0,...,7}{
    \ifnum \k<4
        \node (\k) at ({45+90*\k}:0.5) {};
    \else
        \node (\k) at ({45+90*\k}:1.25) {};
    \fi
}
\draw (0) -- (1);
\draw (1) -- (2);
\draw (2) -- (3);
\draw (3) -- (0);
\draw (4) -- (5);
\draw (5) -- (6);
\draw (6) -- (7);
\draw (7) -- (4);
\draw (0) -- (5);
\draw (1) -- (4);
\draw (2) -- (6);
\draw (3) -- (7);
\end{tikzpicture}}
    \resizetikz{0.65}{\begin{tikzpicture}[scale=1]
\foreach \k in {0,1,2}{
    \node (v\k) at (0, \k) {};
}
\foreach \k in {3,4,5}{
    \node (v\k) at (2, {\k - 3}) {};
}
\foreach \k in {0,1,2}{
    \foreach \l in {3,4,5}{
    \draw (v\k) -- (v\l);
    }
}
\end{tikzpicture}}
    
\end{center}
\caption{The six positively curved $3$-regular graphs.}
\label{fig:positive3regular}
\end{figure}

The $3$-regular graphs with nonnegative Ollivier curvature were classified by Cushing \textit{et al.}~\cite{CKLLS22} in the paper where they introduced the graph curvature calculator. They showed that the $3$-regular graphs with nonnegative Ollivier curvature are the prism graphs and the M\"obius ladders. The statement of Theorem~\ref{alld3} is that six small examples of these graphs are the only $3$-regular graphs with positive LLY curvature. After writing our proof of Theorem~\ref{alld3}, we learned that Hehl~\cite[Theorem 6.16]{H2024} independently showed that these six graphs are the only $3$-regular graphs with positive LLY curvature. Hehl's proof makes use of Cushing \textit{et al.}'s classification of $3$-regular graphs with nonnegative Ollivier curvature, whereas our proof of Theorem~\ref{alld3} only uses properties of LLY curvature. 

Interestingly, there seem to be many $3$-regular graphs with nonnegative LLY curvature which are not prisms or M\"obius ladders. 

\subsection{Improved diameter bound for positively curved $d$-regular graphs}
 Lin, Lu and Yau~\cite{LLY2011} proved that the diameter of any positively curved $d$-regular graph is at most $2d$ as a consequence of a discrete version of the Bonnet-Myers theorem for graphs with positive LLY curvature. This was the best known general upper bound on the diameter of a positively curved $d$-regular graph prior to this present work. Huang, Liu and Xia~\cite{HLX24} have proved sharper diameter upper bounds for amply regular graphs. 
 
 We can improve the general upper bound on the diameter of a positively curved $d$-regular graph by considering the curvatures of individual edges along a diameter path.  

\begin{theorem}\label{thm:improvdiambd}
Let $G$ be a positively curved $d$-regular graph. Then, 
\[\diam(G) \le 2d-2.\]
\end{theorem}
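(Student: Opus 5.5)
The plan is to sharpen the Lin--Lu--Yau Bonnet--Myers argument by exploiting integrality of curvature in the regular case. I will work with the lazy measures $m_v^\alpha$ at the value $\alpha = 1/(d+1)$, where $m_v^\alpha$ is uniform on the closed neighborhood $N[v]$. At this value, for a $d$-regular graph, an optimal transport plan between $m_x^\alpha$ and $m_y^\alpha$ can be taken to be a bijection between the $d+1$ points of $N[x]$ and of $N[y]$. Then $W$ is a multiple of $1/(d+1)$, and the edge curvature $\kappa(x,y)=\frac{d+1}{d}\bigl(1-W(m_x^{\alpha},m_y^{\alpha})\bigr)$ is a multiple of $1/d$. (I take from the later preliminaries that LLY curvature is already linear in $1-\alpha$ on this range.) So positivity of the curvature forces $\kappa(e)\ge 1/d$ for every edge $e$.

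Fix a diametral pair $x,y$ with $L=d(x,y)=\diam(G)$ and a geodesic $x=x_0,x_1,\dots,x_L=y$. By the triangle inequality for $W$,
\[ W(m_x^\alpha,m_y^\alpha)\le \sum_{i=1}^{L} W(m_{x_{i-1}}^\alpha,m_{x_i}^\alpha)= \sum_{i=1}^L\Bigl(1-\tfrac{d}{d+1}\kappa(x_{i-1}x_i)\Bigr). \]
Every point of $N[x]$ is within distance $1$ of $x$, and every point of $N[y]$ within distance $1$ of $y$, so $W(m_x^\alpha,m_y^\alpha)\ge L-\frac{2d}{d+1}$. This is the crude bound $d(u,v)\ge L-2$, except that the pair $(x,y)$ itself, and its matched partners, contribute less. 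Combining the two bounds gives $\sum_i \kappa(x_{i-1}x_i)\le 2$, hence $L/d\le 2$, which is the old bound $2d$. To gain $2$, I will show that the total slack at the two ends is at least $2/d$. That is, either the lower bound on $W(m_x^\alpha,m_y^\alpha)$ improves, or the end edges $x_0x_1$ and $x_{L-1}x_L$ have curvature at least $2/d$.

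The key step, and the main obstacle, is this endpoint analysis, which I will do at the end $x$ (the end $y$ is symmetric). The lower bound $L-\frac{2d}{d+1}$ is tight only if, in the optimal bijection, every neighbor of $x$ other than its partner lies at distance exactly $L-1$ from $y$, and is matched to a neighbor of $y$ at distance $L-1$ from $x$. Here I use that $x$ and $y$ realize the diameter, so no vertex is farther than $L$ from either of them. Suppose every neighbor $u\neq x_1$ of $x$ satisfies $d(u,y)=L-1$. Then every such $u$ is a ``parallel'' start of a geodesic to $y$. I expect this to force each such $u$ to have a neighbor in $N(x_1)\cup\{x_1\}$, or else to let us reroute through a different geodesic. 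In the transport from $N[x]$ to $N[x_1]$ this should make the bijection cheaper by at least one unit, giving $\kappa(x_0x_1)\ge 2/d$.

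If instead some neighbor of $x$ is at distance $L$ from $y$ (it cannot be farther), it cannot be moved toward $y$ cheaply. I would then count it as an extra unit of cost in the lower bound, $W(m_x^\alpha,m_y^\alpha)\ge L-\frac{2d-1}{d+1}$, which yields the same $1/d$ gain. The first thing I will try is to formalize exactly this dichotomy; if the first case fails in general, I will instead choose the geodesic, among all diametral geodesics, to maximize the end-edge curvatures. Adding the gains from both ends gives $\sum_i\kappa(x_{i-1}x_i)\le 2-\frac{2}{d}$ (in either form), so $\frac{L}{d}\le 2-\frac2d$ and $L\le 2d-2$.
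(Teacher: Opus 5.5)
Your set-up is sound: integrality $\kappa\in\frac{1}{d}\mathbb{Z}$ at idleness $\frac{1}{d+1}$, the triangle inequality for $W$, and the Kantorovich lower bound together reproduce $L\le 2d$. The step meant to gain $2$, however, is false, not merely unproven, because the slack need not sit at the ends.

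Take the cuboctahedron ($d=4$, $L=3$). Realize its vertices as the permutations of $(\pm1,\pm1,0)$, adjacent iff their inner product is $1$. Let $x=(1,1,0)$ and $y=(-1,-1,0)$. All four neighbors of $x$ are at distance $2=L-1$ from $y$, so both ends fall in your case (a).

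Now take $x_0=(1,1,0)$ and $x_1=(1,0,1)$. Then $N(x_0)\setminus N[x_1]=\{(1,0,-1),(0,1,-1)\}$ and $N(x_1)\setminus N[x_0]=\{(1,-1,0),(0,-1,1)\}$. Both bijections cost $4$ (as $1+3$ or $2+2$), so $\kappa(x_0x_1)=1+\frac14-1=\frac14=\frac1d$. The graph is edge-transitive, so every edge has curvature $\frac1d$, and no choice of geodesic or rerouting helps.

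Moreover, $N(x)$ is perfectly matched to $N(y)$ by edges: $(1,0,\pm1)\sim(0,-1,\pm1)$ and $(0,1,\pm1)\sim(-1,0,\pm1)$. Hence $W(m_x,m_y)=\frac{3+4}{5}=L-\frac{2d}{d+1}$ exactly. So neither alternative of your dichotomy holds. All the slack ($\sum_i\kappa=\frac34$ against the bound $2$) lies in the triangle inequality, which your accounting never examines.

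There is a separate problem when both ends are in case (b). The two one-sided gains come from different potentials, $d(\cdot,y)$ and $-d(\cdot,x)$, and do not add in general. A far neighbor of $x$ can be matched to a far neighbor of $y$ at distance $L-1$, which gains only one unit.

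The missing idea is to \emph{discard} the two end edges rather than extract slack from them. Run your own argument between $x_1$ and $x_{L-1}$ (for $L\ge 3$; smaller $L$ is trivial) with the potential $f=d(\cdot,x_L)$. At an interior vertex $x_i$, the path neighbors $x_{i-1},x_{i+1}$ contribute $+1$ and $-1$ and cancel. Therefore $|d\,\Delta f(x_i)|\le d-2$, and Kantorovich gives $W(m_{x_1},m_{x_{L-1}})\ge L-2-\frac{2(d-2)}{d+1}$. The triangle inequality over the $L-2$ interior edges then yields $\frac{L-2}{d}\le\sum_{i=1}^{L-2}\kappa(x_ix_{i+1})\le\frac{2(d-2)}{d}$, that is, $L\le 2d-2$.

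This is the paper's proof in different language. In the paper's notation, $d\,\Delta f(x_i)=a_i-c_i$. The inequality $\kappa(x_i,x_{i+1})\le\Delta f(x_{i+1})-\Delta f(x_i)=\frac{(c_i-a_i)-(c_{i+1}-a_{i+1})}{d}$ is what yields Corollary~\ref{cor:dregedge}, which the paper derives through couplings in Lemmas~\ref{lem:dregedgenocmnnbhrs} and~\ref{lem:dregedge}. The strictly decreasing sequence $c_i-a_i\in[-(d-2),d-2]$ is your telescoping sum.
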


The proof of Theorem~\ref{thm:improvdiambd} is given in Section 4.  

\subsection{Positively curved $d$-regular graphs have high connectivity}

Chen, Liu and You~\cite{CLY2025} recently initiated the systematic study of the connectivity of graphs with positive LLY curvature. They proved that the vertex connectivity of a graph is bounded below by the product of its minimum degree and LLY curvature. We improve on this result for regular graphs by showing that $d$-regular positively curved graphs for $d\geq 3$ must be $3$-connected. 

\begin{theorem}\label{thm:3conn}
For each integer $d\ge 3$, every $d$-regular positively curved graph is $3$-connected.
\end{theorem}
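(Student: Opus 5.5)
We argue by contradiction: suppose $G$ is a positively curved $d$-regular graph, $d\ge 3$, with a vertex cut $S$ of size at most $2$. The tools we will use are two standard facts about LLY curvature. First, for an edge $xy$, $\kappa(x,y)>0$ is witnessed by a transport plan: for a $d$-regular graph, $\kappa(x,y)=\frac{d+1}{d}\,\bigl(1-W(m_x^{\alpha},m_y^{\alpha})\bigr)$ in the limit $\alpha\to 1$. Equivalently, positivity means there is a matching-type transport between $N(x)\setminus\{y\}$ and $N(y)\setminus\{x\}$ whose total cost is less than about $d$ plus the number of triangles on $xy$. Concretely, $\kappa(x,y)>0$ forces that, after using the common neighbours and the endpoints, the remaining neighbours of $x$ can be matched to the remaining neighbours of $y$ with average distance strictly below $2$. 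In other words, many of them must be at distance $1$, and none can be wasted at distance $3$. Second, for a non-adjacent pair $x,y$, we use the known reduction that if every edge has positive curvature then every pair does; we will only use edge curvature. I will first set up this ``matching'' reformulation explicitly, since the whole argument is a counting argument on it.

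For a $1$-cut $\{v\}$, take components $A,B$ of $G-v$. The key lemma I want is this. Let $xy$ be an edge with $x\in S$ and $y$ in a component $A$ of $G-S$. Then the neighbours of $x$ lying in the other components must be transported to neighbours of $y$ (in $A\cup S$) at cost $\ge 2$ each, and transport cost from $N(x)$ to $N(y)$ can only be $\le 1$ through $S$. So each component $B\ne A$ contributes neighbours of $x$ that are ``far'' from $N(y)$ unless they are adjacent to the other cut vertex. I will choose $y\in A$ to maximize the deficit and sum the curvature inequality over all edges from $x$ into $A$. The aim is to show that $\sum_{y\in N(x)\cap A}\kappa(x,y)\le 0$ once the degree count $|N(x)\cap A|+|N(x)\cap B|+|N(x)\cap S|=d$ is used. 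A cleaner alternative is a Bonnet–Myers-type argument on the pair $x$, $y$ with $y$ in $A$ and $x$ now in $B$ at maximal distance from $S$. Choosing both deep in their components, any geodesic passes through $S$. Positive curvature of the first and last edges of the geodesic would require the neighbourhoods to ``contract'' toward each other. This is impossible when all neighbours are funneled through $\le 2$ vertices and $d\ge3$, because at least $d-2\ge1$ neighbours cannot move closer. I will attempt the edge-based version first.

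Concretely, the step I would carry out: let $S=\{u,v\}$ (the $1$-cut case is the degenerate $u=v$) and let $A$ be a smallest component of $G-S$. Pick an edge $uy$ with $y\in A$. We have $N(u)=N_A(u)\cup N_B(u)\cup(\{v\}\cap N(u))$ with $N_B(u)\neq\emptyset$. Each $z\in N_B(u)$ is at distance $\ge 2$ from every vertex of $N(y)\setminus\{u,v\}$, and it has distance $1$ only to $u$ and possibly $v$. Writing the optimal transport cost and comparing with the threshold for $\kappa(u,y)>0$, I expect to get an inequality of the form $|N_B(u)|\le 1+\mathbbm{1}[v\sim y]$. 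Symmetrically for $v$, and with roles of $A,B$ swapped. Combining these gives that $u$ and $v$ each have at most about $2$ neighbours on one side. With $d\ge3$ and $A$ smallest, this forces $|A|$ tiny, e.g. $A$ a single vertex or an edge, whose degree is then $\le 3$ and contradicts regularity or curvature directly. The main obstacle is making the transport bound tight enough: common neighbours (triangles through $u$) lower the cost, so the case analysis on triangles at $uy$ and on whether $uv$ is an edge is where the care goes. I would handle it by working with the exact LLY formula via the bipartite matching characterization (for regular graphs, $\kappa(x,y)$ equals $\frac1d$ times ($2+$#triangles $-$ optimal assignment cost excess)), which turns everything into integer inequalities.
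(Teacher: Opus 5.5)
There is a genuine gap at the step you call the key lemma. Your final reformulation $\kappa(u,y)=(2+t-\text{excess})/d$, with $t=|N(u)\cap N(y)|$, is correct and is the tool the paper uses (Theorem~\ref{thm:liluthm}). However, it does not give $|N_B(u)|\le 1+\mathbbm{1}[v\sim y]$. Each $z\in N_B(u)$ contributes excess at least $1$ unless $\phi(z)=v$, so positivity yields only $|N_B(u)|\le 1+t+\mathbbm{1}[v\in N(y)\setminus N[u]]$. Your bound is the special case $t=0$, and $t$ is not controlled: it can be as large as $|N_A(u)|-1$. The bound you can extract uniformly, by keeping only the cost of $N_B(u)$, is $2|N_B(u)|-1\le d$. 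So $u$ has at most about $d/2$ neighbours outside the component of $y$. This is the paper's first step: each $u_i\in S$ has between $\lfloor \ell/2\rfloor$ and $\lceil \ell/2\rceil$ neighbours in every component, where $\ell=d-\mathbbm{1}_{u_1u_2\in E(G)}$, which also forces exactly two components unless $\ell=3$. Your next step would fail even if your bound held. Few edges between $S$ and $A$ say nothing about $|A|$, so there is no reason for $A$ to be a vertex or an edge, and no degree contradiction arises. In the $1$-cut case, the cross-component neighbours are at distance $\ge 3$, not $\ge 2$, from $N(y)\setminus N[u]$. That is what gives cost $\ge 3\lceil d/2\rceil>d$, and your ``degenerate $u=v$'' reading loses it.

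The missing idea is how to beat the threshold in the balanced configuration. Take $d$ even, $u_1u_2\notin E(G)$, and each $u_i$ with $d/2$ neighbours on each side. There the naive count $2\cdot\frac d2=d$ meets the bound exactly, so you must find one extra unit of cost, namely a neighbour at distance $3$ from all targets. The paper does this by comparing $N_{C_j}(u_1)$ with $N_{C_j}(u_2)$.
\begin{itemize}
\item If both differ, take $x_1\in N_{C_1}(u_1)$ and $v_1\in N_{C_2}(u_1)$, both non-adjacent to $u_2$. Then $u_2\notin N(x_1)\setminus N[u_1]$, so every target lies in $C_1\setminus N(u_1)$. Hence $d(v_1,\phi(v_1))\ge 3$, and the cost is $\ge d+1$.
\item If $N_{C_1}(u_1)=N_{C_1}(u_2)$, every target other than $u_2$ lies in $N_{C_1}(x_1)\setminus N_{C_1}(S)$. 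So all but one $C_2$-neighbour of $u_1$ cost $3$, giving cost $\ge \frac{3d}{2}-2\ge d+1$ for $d\ge 6$; the case $d=4$ is treated separately.
\end{itemize}
Similar arguments exclude $u_1u_2\in E(G)$, odd $d$, and three components when $\ell=3$. Your Bonnet--Myers alternative is only a heuristic and does not supply this extra unit either.
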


The proof of Theorem~\ref{thm:3conn} is given in Section 5. 

\subsection{Positively curved regular planar graphs}

In \cite{LU-WANG2020}, Lu and Wang initiated the study of the size of planar graphs with positive Lin--Lu--Yau curvature. Lu and Wang showed that there are only a finite number of positively curved planar graphs with minimum degree at least $3$; in particular, they showed that any such graph has at most $17^{544}$ vertices, although it is conjectured that this upper bound can be greatly reduced. Brooks \textit{et al.}~\cite{BOSWY2025} showed that any positively curved maximal outerplanar graph with minimum degree at least $2$ has at most $10$ vertices. Liu, Lu and Wang~\cite{LLW2024} improved this result by showing that any positively curved outerplanar graph with minimum degree at least $2$ has at most $10$ vertices. The Halin graphs with positive LLY curvature have also been classified recently~\cite{CLLY26}. 

Note that locally finite planar graphs with degree at least $3$ with positive \emph{combinatorial curvature} have been well-studied. Higuchi~\cite{H2001} proposed that all such graphs must in fact be finite. This conjecture was proved by DeVos and Mohar~\cite{DM2007}, who showed that all planar graphs with positive combinatorial curvature must have at most $3444$ vertices, apart from the prism and antiprism graphs. After many improvements, Ghidelli~\cite{G2023} recently proved the optimal bound that all planar graphs with positive combinatorial curvature other than the prism and antiprism graphs must have at most $208$ vertices. 

In a 2022 REU program, Linyuan Lu, Joshua Thompson, and Leah Mangono first considered the problem of classifying positively curved $d$-regular simple planar graphs. The connected $2$-regular planar graphs are the cycles and it has been shown~\cite{LLY2011} that the only cycle graphs which are positively curved are $C_3$, $C_4$ and $C_5$. Since all $d$-regular planar graphs have a vertex of degree at most $5$, the only remaining cases are $d=3,4,5$.

The case $d=3$ is an immediate corollary of Theorem~\ref{alld3}. 

\begin{theorem}\label{d3}
    There are only four $3$-regular positively curved planar graphs: 
    $K_4$, $C_3\square P_2$, $C_4\square P_2$, and $C_5\square P_2$.
\end{theorem}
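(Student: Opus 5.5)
The plan is to derive the statement directly from Theorem~\ref{alld3}, as the paper indicates. By that theorem, every positively curved $3$-regular graph is one of $K_4$, $C_3\square P_2$, $C_4\square P_2$, $C_5\square P_2$, the Wagner graph, and $K_{3,3}$. So the whole task reduces to deciding which of these six graphs are planar.

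\textbf{Step 1: the four planar graphs.} I will show that $K_4$ and the three prisms are planar by giving explicit embeddings, namely the drawings in Figure~\ref{fig:positive3regular}. For $C_n\square P_2$ with $n=3,4,5$, draw one copy of $C_n$ inside the other as concentric $n$-gons. Then join corresponding vertices by radial segments; no two edges cross. For $K_4$, place one vertex at the center of a triangle and join it to the three corners. Since the statement of Theorem~\ref{alld3} already asserts these graphs are positively curved, nothing further about curvature is needed.

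\textbf{Step 2: the two non-planar graphs.} $K_{3,3}$ is non-planar by Kuratowski's theorem. One can also argue directly: it has $9$ edges and $6$ vertices, and since it is bipartite every face has length at least $4$. Euler's formula then forces $|E|\le 2|V|-4=8$, a contradiction.

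For the Wagner graph I will exhibit a $K_{3,3}$-subdivision. Label it as the $8$-cycle $v_0v_1\cdots v_7$ with chords $v_iv_{i+4}$. Delete the edges $v_0v_4$ and $v_1v_2$. The remaining graph is then a subdivision of $K_{3,3}$, with parts $\{v_0,v_2,v_5\}$ and $\{v_1,v_4,v_6\}$, where $v_3$ and $v_7$ are the subdividing vertices. Equivalently, one can check that the Wagner graph has $8$ vertices and $12$ edges and contains no triangles. Euler's formula would then give $|E|\le 2|V|-4=12$, which is not a contradiction, so the subdivision argument is the one to use.

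The only step needing care is verifying the $K_{3,3}$-subdivision in the Wagner graph; I would check each of the nine branch paths explicitly. The paths are:
\begin{itemize}
\item $v_0v_1$, $v_0v_7v_6$, $v_0v_3v_4$ (via $v_7v_0$ and $v_3v_4$; $v_3$ is adjacent to $v_7$, $v_2$, $v_4$);
\item $v_2v_6$, $v_2v_3v_4$;
\item $v_5v_4$, $v_5v_6$, $v_5v_1$.
\end{itemize}
Some of these share the internal vertex $v_3$, so the list as written is not valid. I will redo it with the choice of parts $\{v_0,v_2,v_5,\dots\}$ adjusted. A standard working choice is parts $\{v_0,v_2,v_5\}$ and $\{v_1,v_4,v_6\}$, using the chords $v_2v_6$ and $v_1v_5$, the cycle edges, and subdividing through $v_3$ and $v_7$ with internally disjoint paths. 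Alternatively, I can simply cite the well-known fact that the Wagner graph $V_8$ is non-planar. Either way, this leaves exactly the four graphs listed in the statement.
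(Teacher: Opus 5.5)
Your route is the same as the paper's: Theorem~\ref{d3} is read off from Theorem~\ref{alld3} by discarding the non-planar graphs in its list. The paper also simply asserts that $K_{3,3}$ and the Wagner graph are non-planar, so your Euler-formula argument for $K_{3,3}$, your drawings of $K_4$ and the prisms, and a citation of the non-planarity of $V_8$ together form a complete proof.

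The explicit $K_{3,3}$-subdivision you wrote for the Wagner graph is wrong, though, and should be corrected or dropped.
\begin{itemize}
\item Deleting $v_0v_4$ and $v_1v_2$ leaves only $10$ edges, and only four vertices of degree $3$ ($v_3,v_5,v_6,v_7$). A $K_{3,3}$-subdivision on $8$ vertices has six branch vertices and exactly $9+2=11$ edges. So exactly one edge may be deleted.
\item The same count rules out your later description ``the cycle edges plus the chords $v_2v_6$, $v_1v_5$'', since that is also only $10$ edges.
\end{itemize}
Your parts $\{v_0,v_2,v_5\}$ and $\{v_1,v_4,v_6\}$ do work, but you must keep the chord $v_0v_4$ and delete only $v_3v_7$. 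The nine internally disjoint branch paths are then $v_0v_1$, $v_0v_4$, $v_0v_7v_6$, $v_2v_1$, $v_2v_3v_4$, $v_2v_6$, $v_5v_1$, $v_5v_4$ and $v_5v_6$, with $v_7$ and $v_3$ as the subdividing vertices.
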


Thompson and Mangono actually classified all $3$-regular positively curved simple planar graphs. It was also verified in a computer program by Lu. However, the proof was never written down rigorously.

We now list all positively curved $4$-regular planar graphs. 

\begin{theorem}\label{d4}
    There are only five $4$-regular positively curved planar graphs: 
  the octahedron, the square antiprism, the rectified triangular prism, the cuboctahedron,
 and the pentagonal antiprism. 
\end{theorem}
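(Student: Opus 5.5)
Here is the plan. First we establish two things: the five listed graphs are positively curved, and no other $4$-regular planar graph is. The five graphs are vertex-transitive, so the first part is a finite check. For each graph it suffices to compute the curvature of one edge in each edge orbit, using the limit-free formula for $\kappa(x,y)$ on edges (via an explicit transport plan and a matching $1$-Lipschitz function). Positivity on edges then gives positivity on all vertex pairs, by the standard fact that $\kappa(x,y)\ge \min_{\text{edges}}\kappa$ along a geodesic. The substance is the nonexistence part.

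For nonexistence, we first gather the local constraints. By Theorem~\ref{thm:3conn}, $G$ is $3$-connected, so by Whitney's theorem it has a unique embedding in which every face is bounded by a cycle. Since $G$ is $4$-regular, Euler's formula gives $|E|=2|V|$ and $|F|=|V|+2$, hence
\[\sum_{f}(4-|f|)=8.\]
Next we turn positive curvature on an edge $xy$ into a face-size restriction. For $4$-regular graphs, the curvature of $xy$ is controlled by the number of triangles on $xy$ and by the matching structure between $N(x)\setminus\{y\}$ and $N(y)\setminus\{x\}$ at distance at most $2$. If $xy$ lies in no triangle, positivity essentially requires a perfect matching of these $3+3$ vertices by $4$-cycles and $5$-cycles, with at least one $4$-cycle. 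In the planar setting, each neighbor of $x$ can reach $N(y)$ only through the faces incident to $xy$ or through short cycles crossing a face. We would prove a lemma of this kind: every face has length at most $5$; every edge lies on a triangle or on two faces of length at most $4$; and each vertex sees at least two triangles. The main tool is a careful case analysis of the local rotation at $x$ and $y$: we show that a face of length at least $6$ forces some edge on it to have three neighbors per side that cannot all be paired within distance $2$, so $\kappa\le 0$.

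With these restrictions, the vertex types (cyclic sequences of four face sizes from $\{3,4,5\}$) are limited, and we can run a discharging or combinatorial-curvature argument. Giving each vertex the charge $\sum_{f\ni v}\frac{4-|f|}{|f|}$ shows that every vertex type allowed by the curvature lemma has positive combinatorial curvature. The types that survive the LLY constraints also force the global structure to be very uniform. We would show that the face pattern around any vertex propagates along edges, so $G$ is determined by a single vertex type:
\begin{itemize}
\item $(3,3,3,3)$ gives the octahedron;
\item $(3,4,3,4)$ gives the cuboctahedron;
\item $(3,3,3,4)$ gives the square antiprism, and with a different arrangement the rectified triangular prism;
\item $(3,3,3,5)$ gives the pentagonal antiprism.
\end{itemize}
Mixed configurations and other types, such as $(3,5,3,5)$ or $(3,3,4,4)$, would be excluded by exhibiting an edge whose transport cost is at least $1$. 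The antiprism family $C_n$-antiprism for $n\ge 6$ is excluded by the face-length bound.

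The main obstacle is the local lemma that converts positive LLY curvature into restrictions on face sizes and their arrangement around an edge. It needs an exhaustive but careful case analysis of the transport between the two neighborhoods, using planarity to rule out long-range short paths. We would first try to bound $\kappa(x,y)$ above by $\frac{1}{4}\bigl(2+|\triangle(xy)| - (\text{number of unmatched neighbors})\bigr)$-type estimates, via a Lipschitz test function equal to distance from $x$ with adjustments. The propagation step that pins down the global graph is routine by comparison, and could also be cross-checked by computer on the finitely many graphs with at most about $12$ vertices allowed by $\sum_f(4-|f|)=8$.
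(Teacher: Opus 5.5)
Your route (turn positive curvature into face-size restrictions, then discharge) differs from the paper's connectivity-based case split. As it stands, though, the nonexistence half has a gap at its core.

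\textbf{The local lemma is unproved and carries all the content.} The lemma (faces of length at most $5$; every edge on a triangle or on two faces of length at most $4$; at least two triangles at every vertex) is only announced. Finiteness rests entirely on its third clause. The first two clauses hold for the medial graphs of arbitrarily large quadrangulations of the sphere with eight vertices of degree $3$ (e.g.\ a subdivided cube surface).

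\textbf{Your proposed method assumes the relevant short cycles are facial.} A rotation case analysis at $x$ and $y$ implicitly treats the short cycles closed by a cheap transport plan as faces. Positivity of a triangle-free edge only gives a bijection $N(x)\setminus N[y]\to N(y)\setminus N[x]$ with two pairs at distance $1$ and one at distance at most $2$. The resulting $4$- and $5$-cycles through $xy$ need not bound faces. In the octahedron, the certifying $4$-cycle for every edge is a separating equator. The rectified triangular prism has a $3$-cut. Theorem~\ref{thm:3conn} gives you nothing beyond $3$-connectivity.

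\textbf{This is exactly what the paper's global split handles.} Connectivity exactly $3$ is treated by analysing a $3$-cut directly (Lemma~\ref{lem:4_reg_3_conn}, which produces the rectified triangular prism). In the $4$-connected case, a separating $4$-cycle forces the octahedron (Lemma~\ref{lem:sep_4-cycle}). Only after both exceptions are removed does Corollary~\ref{cor:4-cycle} guarantee that every adjacency between $N(u_1)\setminus N[u_2]$ and $N(u_2)\setminus N[u_1]$ closes a facial $4$-cycle. That local-to-facial conversion is what your lemma needs, and it drives the maximum-face argument of Lemma~\ref{lem:4-reg_no_sep}. Your proposal has no substitute for it.

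\textbf{The endgame is also wrong as designed.} The rectified triangular prism is neither vertex-transitive nor determined by a single vertex type. The six vertices on the two triangles separated by its $3$-cut have type $(3,3,3,4)$, and the three cut vertices have type $(3,4,3,4)$. A $(3,3,3,4)$ vertex has charge $\sum_{f\ni v}\frac{4-|f|}{|f|}=1$, and the charges sum to $8$. So a graph in which every vertex has this type has exactly $8$ vertices (the square antiprism), whereas the rectified triangular prism has $9$. Your propagation claim, together with the plan to exclude all mixed configurations, would therefore wrongly eliminate one of the five graphs. The endgame must allow mixed types, and the positivity check for this graph must cover more than one edge class.

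\textbf{Smaller points.}
\begin{itemize}
\item With the types your lemma still permits (e.g.\ $(3,5,3,5)$, charge $\frac{4}{15}$), the charge count gives $|V|\le 30$, not about $12$. That bound does reduce the rest to a finite search once the lemma is proved. The identity $\sum_f(4-|f|)=8$ alone bounds nothing.
\item For a triangle-free edge, $\kappa(x,y)>0$ requires a bijection of cost at most $d=4$ on three pairs, hence at least two pairs at distance $1$. One $4$-cycle plus two $5$-cycles gives cost $5$ and $\kappa=0$.
\end{itemize}
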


\begin{figure}[h]
\begin{center}
    \resizetikz{0.6}{\begin{tikzpicture}[scale = 1]
\foreach \k in {0,1,2}{
\node (\k) at (30 + 120*\k:0.5) {};
}
\foreach \k in {3,4,5}{
\pgfmathtruncatemacro{\i}{\k - 3}
\node (\k) at (90 + 120*\i:2) {};
}
\draw (0) -- (1) -- (2) -- (0);
\draw (3) -- (4) -- (5) -- (3);
\draw (0) -- (3) -- (1) -- (4) -- (2) -- (5) -- (0);
\end{tikzpicture}}
    \resizetikz{0.6}{\begin{tikzpicture}[scale=1]
\foreach \k in {0,...,7}{
    \ifnum \k<4
        \node (\k) at ({45+90*\k}:0.75) {};
    \else
        \node (\k) at ({45+90*\k}:2) {};
    \fi
}
\draw (0) -- (1);
\draw (1) -- (2);
\draw (2) -- (3);
\draw (3) -- (0);
\draw (4) -- (5);
\draw (5) -- (6);
\draw (6) -- (7);
\draw (7) -- (4);
\draw (0) -- (4);
\draw (1) -- (5);
\draw (2) -- (6);
\draw (3) -- (7);
\draw (0) -- (5);
\draw (1) -- (6);
\draw (2) -- (7);
\draw (3) -- (4);
\end{tikzpicture}}
    \resizetikz{0.6}{\begin{tikzpicture}[scale = 1]
\foreach \k in {3,4,5}{
\pgfmathtruncatemacro{\i}{\k - 3}
\node (\k) at (90 + 120*\i:0.5) {};
}
\foreach \k in {6,7,8}{
\pgfmathtruncatemacro{\i}{\k - 6}
\node (\k) at (30 + 120*\i:0.65) {};
}
\foreach \k in {9,10,11}{
\pgfmathtruncatemacro{\i}{\k - 9}
\node (\k) at (90 + 120*\i:2) {};
}
\draw (5) -- (3) -- (4) -- (5);
\draw (5) -- (6) -- (3) -- (7) -- (4) -- (8) -- (5);
\draw (9) -- (10) -- (11) -- (9);
\draw (9) -- (7) -- (10) -- (8) -- (11) -- (6) -- (9);
\end{tikzpicture}}
    \resizetikz{0.6}{\begin{tikzpicture}[scale = 1]
\foreach \k in {0,1,2,3}{
\node (\k) at (45 + 90*\k:0.75) {};
}
\foreach \k in {4,5,6,7}{
\pgfmathtruncatemacro{\i}{\k - 4}
\node (\k) at (90*\i:1) {};
}
\foreach \k in {8,9,10,11}{
\pgfmathtruncatemacro{\i}{\k - 8}
\node (\k) at (45 + 90*\i:2) {};
}
\draw (0) -- (1) -- (2) -- (3) -- (0);
\draw (4) -- (0) -- (5) -- (1) -- (6) -- (2) -- (7) -- (3) -- (4);
\draw (4) -- (8) -- (5) -- (9) -- (6) -- (10) -- (7) -- (11) -- (4);
\draw (8) -- (9) -- (10) -- (11) -- (8);
\end{tikzpicture}}
    \resizetikz{1}{\begin{tikzpicture}[scale=1]
\foreach \k in {0,...,9}{
    \ifnum \k<5
        \node (\k) at ({90+72*\k}:0.4) {};
    \else
        \node (\k) at ({90+72*\k}:1) {};
    \fi
}
\draw (0) -- (1);
\draw (1) -- (2);
\draw (2) -- (3);
\draw (3) -- (4);
\draw (4) -- (0);
\draw (5) -- (6);
\draw (6) -- (7);
\draw (7) -- (8);
\draw (8) -- (9);
\draw (9) -- (5);
\draw (0) -- (5);
\draw (1) -- (6);
\draw (2) -- (7);
\draw (3) -- (8);
\draw (4) -- (9);
\draw (5) -- (4);
\draw (6) -- (0);
\draw (7) -- (1);
\draw (8) -- (2);
\draw (9) -- (3);
\end{tikzpicture}}
\end{center}
\caption{The five positively curved $4$-regular planar graphs.}
\label{fig:positive4regular}
\end{figure}
Thompson and Mangono found these $4$-regular positively curved simple planar graphs. It was also verified by an exhaustive search in a computer program written by Lu. No complete analysis was written. The proof of Theorem~\ref{d4} uses Theorem~\ref{thm:3conn} and some additional arguments and is given in Section 6.  

We complete the classification of the positively curved regular planar graphs by showing that the only $5$-regular positively curved planar graph is the icosahedral graph (see Figure~\ref{fig:icosahedron}). The proof is given in Section 7.  

\begin{figure}[ht]
\centering
\begin{tikzpicture}[scale = 1]
\foreach \k in {0,1,2}{
\node (\k) at (30 + 120*\k:0.25) {};
}
\foreach \k in {3,4,5}{
\pgfmathtruncatemacro{\i}{\k - 3}
\node (\k) at (90 + 120*\i:0.5) {};
}
\foreach \k in {6,7,8}{
\pgfmathtruncatemacro{\i}{\k - 6}
\node (\k) at (30 + 120*\i:0.75) {};
}
\foreach \k in {9,10,11}{
\pgfmathtruncatemacro{\i}{\k - 9}
\node (\k) at (90 + 120*\i:2.5) {};
}
\draw (0) -- (1) -- (2) -- (0);
\draw (5) -- (0) -- (3) -- (1) -- (4) -- (2) -- (5);
\draw (5) -- (6) -- (3) -- (7) -- (4) -- (8) -- (5);
\draw (0) -- (6);
\draw (1) -- (7);
\draw (2) -- (8);
\draw (3) -- (9);
\draw (4) -- (10);
\draw (5) -- (11);
\draw (9) -- (10) -- (11) -- (9);
\draw (9) -- (7) -- (10) -- (8) -- (11) -- (6) -- (9);
\end{tikzpicture}
\caption{The icosahedral graph}
\label{fig:icosahedron}
\end{figure}

\begin{theorem}\label{d5}
    There is only one $5$-regular positively curved planar graph, the icosahedral graph. 
\end{theorem}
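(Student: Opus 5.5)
We will show that a positively curved $5$-regular planar graph $G$ must be a triangulation in which every vertex has degree $5$; Euler's formula then forces $G$ to be the icosahedral graph. Indeed, if $G$ is $5$-regular and planar, Euler's formula gives
\[
\sum_{f}(\deg f - 3) = 2|E| - 3|F| = 5n - 3\left(2 - n + \tfrac{5n}{2}\right) = 6 - 3n + \ldots,
\]
so all faces are triangles exactly when $|F| = 2|E|/3 = 5n/3$, that is, $n - 5n/2 + 5n/3 = 2$, which gives $n = 12$. A $5$-regular planar triangulation on $12$ vertices is the icosahedron, since the icosahedron is the unique $5$-regular triangulation. By Theorem~\ref{thm:3conn} the graph is $3$-connected, so the planar embedding is unique and every face is bounded by a cycle. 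The main work is therefore to show that no face has length $\ge 4$.

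The key local step is to evaluate the LLY curvature of an edge $xy$ in terms of the triangles through it. For a regular graph, the LLY curvature of an edge is $\kappa(x,y) = \frac{1}{d}\bigl(d+1 - W\bigr)$, where $W$ is the optimal transport cost between $N[x]$ and $N[y]$, each with uniform weight. Here $d = 5$, so positivity requires $W < 6$, i.e. $W \le 5$. In a planar embedding the edge $xy$ lies on two faces. If both faces are triangles, $x$ and $y$ have at least $2$ common neighbors. If some face at $xy$ has length $\ge 4$, then $x$ and $y$ share at most one common neighbor through the other side; moreover, planarity limits how many of the remaining neighbors of $x$ can be matched at distance $1$ to neighbors of $y$.

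I would use the standard matching characterization of LLY curvature to make this precise. Positivity of $\kappa(x,y)$ requires a perfect matching between $N(x)\setminus N[y]$ and $N(y)\setminus N[x]$ in which the matched pairs are mostly at distance $1$. A transport cost of $6$ or more already forces $\kappa \le 0$. Take a face $F$ of length $k \ge 4$ containing the edge $xy$. Let $x'$ be the other neighbor of $x$ on $F$ and $y'$ the other neighbor of $y$ on $F$. Then $x' \neq y'$ and $x'y' \notin E$ when $k \ge 5$, while $x'y' \in E$ when $k = 4$. By planarity, the cyclic orders of $N(x)$ and $N(y)$ around $x$ and $y$ constrain any matching to be non-crossing. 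I will count the required distances $1,2,3$ and show that $W \ge 6$ unless $F$ is a triangle. The case analysis splits according to the other face at $xy$: whether it is a triangle, a quadrilateral, or longer.

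I expect the quadrilateral case to be the main obstacle. Here $x'y'$ is an edge, so distance-$1$ pairs exist on both sides, and the crude count may give only $W \le 5$. If a single edge does not yield a contradiction, I would instead argue around the whole quadrilateral face $xyy'x'$, using curvature of the diagonal pair $(x,y')$ or of all four boundary edges simultaneously. Alternatively, I would combine local weak bounds with a discharging argument: assign charge $\deg f - 3$ to faces and use the total from Euler's formula to force a configuration in which two adjacent faces both have length $\ge 4$, which should be easier to kill. The positive-curvature facts for $C_4$-like local structures in the planar setting, together with Theorem~\ref{thm:improvdiambd} to rule out long configurations, should close the remaining cases.
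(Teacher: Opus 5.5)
\textbf{There is a genuine gap.} Your reduction to ``every face is a triangle'' is fine. The correct identity is $\sum_f(\deg f-3)=n/2-6$, so a $5$-regular triangulation has $12$ vertices. The trouble is the local step meant to rule out longer faces: it does not hold with only the $3$-connectivity of Theorem~\ref{thm:3conn}. You claim that a non-triangular face at $xy$ leaves $x,y$ with at most one common neighbour, and that planarity (non-crossing) limits the distance-$1$ pairs between $N(x)\setminus N[y]$ and $N(y)\setminus N[x]$. Both claims need the absence of separating triangles and separating $4$-cycles through $xy$, and only $5$-connectivity excludes these.

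Two configurations show the failure:
\begin{enumerate}[(i)]
\item Let $x$ have neighbours $y,u_1,\dots,u_4$ and $y$ have neighbours $x,v_1,\dots,v_4$, with $u_iv_i\in E$ for all $i$. The $4$-cycles $xu_iv_iy$ can be drawn nested, hence non-crossing. Both faces at $xy$ are quadrilaterals, yet $W=4$.
\item Let $xy$ lie on a pentagonal face $x'xyy'w$ and a triangular face $xyz$, and give $x,y$ two further common neighbours $a,b$ (separating triangles $xya$, $xyb$). Then $N(x)\setminus N[y]=\{x'\}$, $N(y)\setminus N[x]=\{y'\}$, and $W=2$.
\end{enumerate}
In both, the edge $xy$ itself is positively curved, so no counting at that edge can give $W\ge 6$.

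The missing ingredient is $5$-connectivity, which is the bulk of the paper's proof (Theorem~\ref{thm:5reg5conn}). The paper takes a $3$-cut, and then a $4$-cut, $S$ with a vertex-minimum component $C_1$. It evaluates the cut-distance test functions $f_{S,C}$ at edges leaving $S$, which forces either a smaller cut or a non-positively curved edge. Once $G$ is $5$-connected, your count does work for faces of length at least $5$. The two difference sets then have at least three vertices each, with at most one edge between them (across a quadrilateral face), so $W\ge 6$.

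Even with $5$-connectivity, the quadrilateral case cannot be closed at a single edge, as you suspected. Write $x_i$ for the common neighbour of $u_i,u_{i+1}$ and $y_i$ for the fifth neighbour of $u_i$. For a $4$-face $u_1u_2u_3u_4$ whose other face at $u_1u_2$ is the triangle $u_1u_2x_1$, the sets are $\{u_4,x_4,y_1\}$ and $\{u_3,x_2,y_2\}$. The best a priori bound is $W\ge 1+2+2=5$, which is compatible with $\kappa(u_1,u_2)=1/5>0$.

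The paper closes this case in two steps. First, it shows that each consecutive pair $u_i,u_{i+1}$ must have a common neighbour, since otherwise a simple test function gives $\kappa\le 0$. Second, it uses $5$-connectivity to find an index $i_0$ with $d(x_{i_0},\{x_{i_0+2},y_{i_0+2}\})=3$. This upgrades one pair to distance $3$ at the corresponding edge of the face and yields $W\ge 6$.

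Your fallbacks do not supply this step. Euler's formula gives total charge $n/2-6>0$ whenever $n>12$ but does not force two adjacent long faces; the snub cube, with six pairwise disjoint squares, shows this. The bound $\diam(G)\le 8$ gives no local contradiction.
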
 

We conclude in Section 8 with further research directions. 

\section{Background}

\subsection{Graph Theory Definitions}

A \emph{graph} $G = (V, E)$ is a pair consisting of a set of vertices $V$ and a set of edges $E$. Each edge $e = uv$ is a distinct $2$-element subset of $V$. For a set of vertices $S\subseteq V$, $G[S]$ denotes the \emph{induced subgraph} of $G$ on the set $S$. A graph $H$ is a \emph{minor} of a graph $G$ if $H$ can be obtained from $G$ by a sequence of edge contractions and vertex deletions. The graph $G$ is \emph{planar} if it can be drawn in the plane with no crossing edges; more formally, $G$ is planar if it is both $K_{5}$-minor-free and $K_{3, 3}$-minor-free. 

Let $u, v$ be two distinct vertices in $G$. A \emph{path} between $u$ and $v$ is a set of distinct vertices $u, u_0, \ldots u_i, v \in V$ such that $uu_0, u_0u_1, \ldots u_iv\in E$. The \emph{length} of the path $uu_0\ldots v$ is the number of edges in the path. The \emph{distance} $d(u, v)$ is the length of the shortest path between the two vertices $u$ and $v$. We assume that the graph $G$ is connected, so that there is a path between every pair of distinct vertices $u$ and $v$. The \emph{diameter} of $G$ is the longest distance between any two vertices in $G$, that is, $\text{diam}(G) = \max_{u\neq v\in V(G)}\{d(u, v)\}$. A \emph{diameter path} is a path in $G$ with length equal to the diameter of $G$ and such that there is no shorter path connecting the two endpoints of the path.  


The \emph{neighborhood} $N(v)$ of a vertex $v\in V$ is the set $N(v):= \{u\in V: uv\in E\}$. The \emph{closed neighborhood} $N[v]$ is the set $\{v\} \cup N(v)$. Let $S\subseteq V$ be a vertex subset. The \emph{first neighborhood} of $S$ is the set $N^1(S):= (\bigcup_{v\in S} N(v)) \backslash S$ of all vertices in $V(G) \setminus S$ adjacent to at least one vertex in $S$, and for each $i\ge 2$, the \emph{$i$th neighborhood} of $S$ is the set $N^{i}(S)=N^1(N^{i-1}(S))\backslash (S \cup \bigcup_{j<i} N^j(S))$. For a subgraph $H$ of $G$, let $N_H^i(S)$ denote the set $N^i(S)\cap V(H)$. Throughout, we drop the superscript from the first neighborhood of $S$ when it is clear from the context.

\subsection{Diameter Path Definitions and Lemmas}

Let $x$ and $y$ be two vertices in a graph $G$. We may divide the neighborhood of $y$ into three disjoint sets depending on the vertex's distance from $x$:
\[N_x^{+}(y) := \{v : v\in N(y),\, d(x, v) = d(x, y) + 1\} ;\]
\[N_x^{0}(y) := \{v: v\in N(y),\, d(x, v) = d(x, y)\};\]
\[N_x^{-}(y) := \{v: v\in N(y),\, d(x, v) = d(x, y) - 1\}. \]

Let $G$ be a connected $d$-regular graph with diameter $L$ and let $P = x_0x_1\dotsm x_L$ be a diameter path in $G$. We define the \textit{type} $(\star_1,\dots, \star_{d-2})$ of each vertex $x_i\in V(P)\setminus\{x_0, x_L\}$ such that for all $d-2$ neighbors $v_i^{(j)} \in  N(x_i)\setminus \{x_{i-1}, x_{i+1}\}$, $\star_j$ is the \textit{sign} of the quantity $d(x_i, x_L) - d(v_i^{(j)}, x_L) $, so that $\star_j \in \{-, 0, +\}$. We sometimes use the notation $(-^{a}, 0^b, +^{c})$ to indicate that a vertex has $a$ neighbors with sign $-$, $b$ neighbors with sign $0$ and $c$ neighbors with sign $+$. 

For example, consider the $4$-regular graph $G$ with diameter $3$ shown in Figure~\ref{fig:4Rgraph1}. In the diameter path $P = x_0x_1x_2x_3$, $x_1$ is of type $(0,0)$ and $x_2$ is of type $(0,-)$.

\begin{figure}[ht]
\centering
\begin{tikzpicture}[scale = 1.5]
\foreach \k in {0,1,2,3}{
\coordinate (\k) at (45 + 90*\k:0.75) {};
}
\foreach \k in {4,5,6,7}{
\pgfmathtruncatemacro{\i}{\k - 4}
\coordinate (\k) at (90*\i:1) {};
}
\foreach \k in {8,9,10,11}{
\pgfmathtruncatemacro{\i}{\k - 8}
\coordinate (\k) at (45 + 90*\i:2) {};
}
\draw (1) -- (2) -- (3) -- (0);
\draw (4) -- (0) -- (5) -- (1);
\draw (6) -- (2) -- (7) -- (3) -- (4);
\draw (4) -- (8) -- (5) -- (9) -- (6);
\draw (10) -- (7) -- (11) -- (4);
\draw (8) -- (9) -- (10) -- (11) -- (8);
\draw[thick] (10) -- (6) -- (1) -- (0);

\foreach \name in {2,3,4,5,7,8,9,11}{
\node at (\name) {};
}

\node[label=below:$x_0$] at (225:2) {};
\node[label=180:$x_1$] at (180:1) {};
\node[label=135:$x_2$] at (135:0.75) {};
\node[label=45:$x_3$] at (45:0.75) {};
\end{tikzpicture}
\caption{The cuboctahedron with a diameter path shown}
\label{fig:4Rgraph1}
\end{figure}

We record some basic properties of signs of vertices that will be used repeatedly in Section 4.  

\begin{lemma}\label{lem:signsnbrs}
Let $x_0 - ... - x_L$ be a diameter path in a $d$-regular graph and let $1\le i\le L-1$. Let $v_i \in N(x_i)$. Then, the following claims hold. 
\begin{enumerate}
    \item[(1)] If $v_i$ has sign $-$ with respect to $x_i$, then $v_i \notin N(x_{i+1})$. 
    \item[(2)] If $v_i$ has sign $0$ with respect to $x_i$ and $v_i \in N(x_{i+1})$, then $v_i$ has sign $-$ with respect to the vertex $x_{i+1}$. 
    \item[(3)] If $v_i$ has sign $+$ with respect to $x_i$ and $v_i \in N(x_{i+1})$, then $v_i$ has sign $0$ with respect to the vertex $x_{i+1}$. 
\end{enumerate}
\end{lemma}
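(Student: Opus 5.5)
Since $P$ is a diameter path, it is a shortest path from $x_0$ to $x_L$. Hence $d(x_j, x_L) = L - j$ for every $j$. The sign of a neighbor $v$ of $x_i$ is the sign of $d(x_i,x_L) - d(v,x_L) = (L-i) - d(v,x_L)$. So sign $-$, $0$, $+$ correspond to $d(v,x_L) = L-i+1$, $L-i$, $L-i-1$ respectively; these are the only options, because $|d(v,x_L) - d(x_i,x_L)| \le 1$ when $v \sim x_i$. The plan is to rewrite every hypothesis as an exact value of $d(v_i,x_L)$, and then compare that value with $d(x_{i+1},x_L) = L-i-1$ using the triangle inequality along the edge $v_i x_{i+1}$.

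For (1), suppose $v_i$ has sign $-$, so $d(v_i,x_L) = L-i+1$. If $v_i$ were adjacent to $x_{i+1}$, then
\[ d(v_i,x_L) \le 1 + d(x_{i+1},x_L) = L-i, \]
a contradiction. Hence $v_i \notin N(x_{i+1})$.

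For (2) and (3), we use that the sign of $v_i$ with respect to $x_{i+1}$ is the sign of $(L-i-1) - d(v_i,x_L)$. If $v_i$ has sign $0$ with respect to $x_i$, then $d(v_i,x_L) = L-i$, so the difference is $-1$ and the sign is $-$; this gives (2). If $v_i$ has sign $+$ with respect to $x_i$, then $d(v_i,x_L) = L-i-1$, so the difference is $0$ and the sign is $0$; this gives (3). In both cases adjacency to $x_{i+1}$ is needed only so that $v_i$ counts as a neighbor of $x_{i+1}$ and therefore has a sign there.

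There is no real obstacle. The one point to state carefully is that $d(x_j,x_L) = L-j$, which holds because a diameter path is by definition a geodesic. One should also note that $v_i \ne x_{i+1}$, since $v_i$ ranges over the neighbors other than $x_{i-1}$ and $x_{i+1}$. When $i = L-1$, we have $x_{i+1} = x_L$, and no vertex $x_{i+1}$ with $i+1 = L$ has a sign. In that case (1) still holds by the argument above, while (2) and (3) are either vacuous or hold with the same formal computation.
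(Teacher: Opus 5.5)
Your proposal is correct and follows essentially the same route as the paper: you convert each sign into an exact value of $d(v_i,x_L)$ using $d(x_j,x_L)=L-j$ along the geodesic, then compare that value with $d(x_{i+1},x_L)$ via the triangle inequality across the edge $v_ix_{i+1}$. Your remarks on $v_i\neq x_{i+1}$ and on the boundary case $i=L-1$ are careful additions that the paper leaves implicit.
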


\begin{proof}[Proof of Lemma~\ref{lem:signsnbrs}]
If $v_i$ has sign $-$ with respect to $x_i$, then $d(v_i, x_L) = d(x_i, x_L) + 1 = d(x_{i+1}, x_L) + 2$, so $v_i \notin N(x_{i+1})$, as otherwise there would be a shorter path from $x_L$ to $v_i$. This proves (1). 

If $v_i$ has sign $0$ with respect to $x_i$ and $v_i \in N(x_{i+1})$, then $d(v_i, x_L) = d(x_i, x_L) = d(x_{i+1}, x_L) + 1$, so $v_i$ has sign $-$ with respect to $x_{i+1}$, which proves (2). The proof of (3) is similar. 
\end{proof}

\subsection{LLY Curvature Definitions and Lemmas}

Given a simple connected graph $G = (V, E)$, the Lin-Lu-Yau (LLY) curvature can be defined between any pair of distinct vertices $x, y\in V(G)$. The original definition of Lin, Lu and Yau was given as a limit of Ollivier-Ricci curvature. We give a limit-free definition of LLY curvature in terms of the graph Laplacian, which was shown to be equivalent to the original definition of LLY curvature by M\"{u}nch and Wojciechowski~\cite{MW2019}. 

 A function $f:V(G) \rightarrow \mathbb{R}$ is \emph{$1$-Lipschitz} if $|f(x) - f(y)| \le d(x, y)$ for all $x, y \in V(G)$, where $d(x, y)$ is the graph distance between $x$ and $y$. The \emph{graph Laplacian} for the graph $G$ is defined by 
\[\Delta f(x) = \frac{1}{d_x}\sum_{v\in N(x)}(f(v)-f(x)).\]

\begin{definition}[LLY curvature~\cite{MW2019}]\label{defn:llycurv}
    For a simple connected graph $G = (V, E)$, for any distinct pair of vertices $x, y\in V(G)$, we define the LLY curvature $\kappa(x, y)$ by 
    \[\kappa(x, y) = \inf_{\nabla_{yx} f = 1} \nabla_{xy}\Delta f,\]
    where $\nabla_{xy}f= \frac{f(x) - f(y)}{d(x, y)}$ is the \emph{gradient} function. Furthermore, it suffices to optimize over integer-valued $1$-Lipschitz functions $f$. 
\end{definition}

Although LLY curvature is defined over all pairs of distinct vertices $x, y\in V(G)$, the following lemma implies that when studying positively curved graphs it suffices to show $\kappa(x, y) > 0$ when $xy\in E(G)$.

\begin{lemma}[\cite{LLY2011}]\label{lem:edgessuffice}
If $\kappa(x, y) \ge \kappa_0$ for any $xy \in E(G)$, then $\kappa(x, y) \ge \kappa_0$ for any pair of vertices $(x, y)$. 
\end{lemma}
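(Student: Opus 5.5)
The plan is a triangle-inequality argument along a geodesic: prove that $\kappa$ is superadditive in the distance-weighted sense. Take any pair $x\neq y$ with $d(x,y)=n$ and a shortest path $x=z_0z_1\cdots z_n=y$. The aim is to show
\[
n\,\kappa(x,y)\;\ge\;\sum_{i=0}^{n-1}\kappa(z_i,z_{i+1}),
\]
which immediately gives $\kappa(x,y)\ge \frac1n\cdot n\kappa_0=\kappa_0$.

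To prove the displayed inequality I will use Definition~\ref{defn:llycurv} directly. Fix any $1$-Lipschitz $f$ with $\nabla_{yx}f=1$, that is, $f(y)-f(x)=n$.
\begin{itemize}
\item Since $f$ is $1$-Lipschitz, each step changes $f$ by at most $1$, and the total change over $n$ steps is exactly $n$. Hence $f(z_{i+1})-f(z_i)=1$ for every $i$, so $\nabla_{z_{i+1}z_i}f=1$ for each edge of the path.
\item Each edge $z_iz_{i+1}$ therefore admits $f$ in its own infimum, which gives $\nabla_{z_iz_{i+1}}\Delta f=\Delta f(z_i)-\Delta f(z_{i+1})\ge \kappa(z_i,z_{i+1})$.
\item Summing over $i$ telescopes to
\[
\Delta f(x)-\Delta f(y)\;\ge\;\sum_i \kappa(z_i,z_{i+1}).
\]
The left side is exactly $n\,\nabla_{xy}\Delta f$, since $\nabla_{xy}$ divides by $d(x,y)=n$.
\end{itemize}
Taking the infimum over all admissible $f$ yields the superadditivity inequality, and hence the lemma.

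The only step needing care is the sign and orientation convention in Definition~\ref{defn:llycurv}. The constraint is stated as $\nabla_{yx}f=1$ while the objective is $\nabla_{xy}\Delta f$, and I must check that the per-edge constraints come out with the same orientation as the pair constraint. Under the convention above, $\nabla_{yx}f=1$ means $f(y)-f(x)=d(x,y)$, and $\nabla_{z_{i+1}z_i}f=1$ is the matching edge constraint for the pair $(z_i,z_{i+1})$, so the orientations agree. It also suffices to note that the infimum for the pair $(x,y)$ ranges over a subset of the functions admissible for each edge, which is exactly what the argument uses.

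No regularity or other hypothesis on $G$ is required beyond connectivity, so that a geodesic between $x$ and $y$ exists.
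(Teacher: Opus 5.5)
Your proof is correct. Note that the paper itself does not prove this lemma; it only cites \cite{LLY2011}. There the statement is derived from the original limit definition: along a geodesic $x=z_0,\dots,z_n=y$ one uses the triangle inequality for the transportation distance, $W(m_x^\alpha,m_y^\alpha)\le\sum_i W(m_{z_i}^\alpha,m_{z_{i+1}}^\alpha)$. This gives $n\,\kappa_\alpha(x,y)\ge\sum_i\kappa_\alpha(z_i,z_{i+1})$ at every idleness level, and one then passes to the limit.

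You instead work in the limit-free formulation of Definition~\ref{defn:llycurv}. There everything reduces to one observation: a $1$-Lipschitz $f$ with $f(y)-f(x)=d(x,y)$ must increase by exactly $1$ on each edge of a geodesic. Hence the admissible set for $(x,y)$ lies inside the admissible set for every edge $(z_i,z_{i+1})$, and the objective telescopes. This is essentially the Lipschitz-function (dual) side of the same triangle-inequality idea. Your orientation bookkeeping, pairing $\nabla_{z_{i+1}z_i}f=1$ with the objective $\Delta f(z_i)-\Delta f(z_{i+1})$, is right.

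Your route buys a short argument with no couplings and no limits, and it yields the superadditivity $d(x,y)\,\kappa(x,y)\ge\sum_i\kappa(z_i,z_{i+1})$ directly. The cost is that it rests on the M\"unch--Wojciechowski equivalence \cite{MW2019}, a much deeper result than the lemma itself. The original argument needs only that $W$ is a metric. Since the paper adopts Definition~\ref{defn:llycurv} as its definition of LLY curvature, relying on it here is legitimate.
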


Li and Lu~\cite{LL2024} gave an alternative formulation of LLY curvature $\kappa(x, y)$ of an edge $xy$ in a graph $G$ in terms of certain well-chosen mass distributions on $x$ and $y$. Since all the graphs we consider in this paper are $d$-regular, we simplify their formulation somewhat. 

For an edge $xy$ in a $d$-regular graph, consider the following pair of mass distributions:
\[\mu_x(v) = \begin{cases} 1 & \text{if } v\in N(x)\setminus N[y]\\ 0 & \text{otherwise}\end{cases} \quad \text{ and } \quad \mu_y(v) = \begin{cases} 1 & \text{if } v\in N(y)\setminus N[x]\\ 0 & \text{otherwise}.\end{cases} \]

A \emph{coupling} $\sigma$ between $\mu_x$ and $\mu_y$ is a map $\sigma:V(G) \times V(G) \rightarrow [0, 1]$ such that
\[\sum_{v\in V(G)} \sigma(u, v) = \mu_x(u) \text{ and } \sum_{u\in V(G)}\sigma(u, v) = \mu_y(v).\]

For any coupling $\sigma$ between $\mu_x$ and $\mu_y$, the \emph{cost function} $C_{\sigma}$ is defined by 
\[C_{\sigma} = \sum_{u, v\in V(G)}\sigma(u, v)d(u, v).\]

Li and Lu proved the following result on $\kappa(x, y)$. 

\begin{theorem}[Li--Lu~\cite{LL2024}]\label{thm:liluthm}
For any edge $xy$ in a $d$-regular graph $G$, we have 
\[\kappa(x, y) = 1 + \frac{1}{d} - \frac{\min_{\sigma}C_{\sigma}}{d},\]
where the minimum is taken over all integer-valued couplings $\sigma$ between $\mu_x$ and $\mu_y$. 
\end{theorem}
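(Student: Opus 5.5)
The plan is to work directly from Definition~\ref{defn:llycurv}. First I rewrite the objective for an edge $xy$ with $d(x,y)=1$. Then I identify the resulting infimum with an optimal transport cost via Kantorovich duality.

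\textbf{Step 1: rewrite the objective.} Let $f$ be $1$-Lipschitz with $\nabla_{yx}f=f(y)-f(x)=1$. Since $G$ is $d$-regular,
\[
\nabla_{xy}\Delta f=\Delta f(x)-\Delta f(y)=\frac1d\Big(\sum_{v\in N(x)}f(v)-\sum_{v\in N(y)}f(v)\Big)+f(y)-f(x).
\]
The last term equals $1$. In the two sums, the common neighbours $N(x)\cap N(y)$ cancel. The vertex $y\in N(x)$ and the vertex $x\in N(y)$ contribute $f(y)-f(x)=1$. Write $A=N(x)\setminus N[y]$ and $B=N(y)\setminus N[x]$. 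Regularity gives $|A|=|B|$, since both equal $d-1-|N(x)\cap N(y)|$. Hence
\[
\nabla_{xy}\Delta f = 1+\frac1d-\frac1d\Big(\sum_{b\in B}f(b)-\sum_{a\in A}f(a)\Big).
\]
So it suffices to show that the supremum of $\sum_B f-\sum_A f$, over $1$-Lipschitz $f$ with $f(y)-f(x)=1$, equals $\min_\sigma C_\sigma$.

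\textbf{Step 2: the easy inequality.} Take any integer coupling $\sigma$ between $\mu_x$ and $\mu_y$. Because $|A|=|B|$, such a coupling is a bijection $\pi:A\to B$. Then
\[
\sum_B f-\sum_A f=\sum_{a\in A}\big(f(\pi(a))-f(a)\big)\le \sum_a d(a,\pi(a))=C_\sigma.
\]
This gives $\sup\le\min C_\sigma$, and therefore $\kappa(x,y)\ge 1+\frac1d-\frac{\min C_\sigma}{d}$.

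\textbf{Step 3: the reverse inequality.} By Kantorovich--Rubinstein duality, the optimal transport cost between the uniform measures on $A$ and $B$ equals the maximum of $\sum_B g-\sum_A g$ over $1$-Lipschitz $g$. By the Birkhoff--von Neumann theorem, the transport polytope between two equal-size uniform measures has integral vertices. So this cost equals $\min_\sigma C_\sigma$ over integer couplings, which also justifies restricting Theorem~\ref{thm:liluthm} to integer couplings. Fix such an optimal $g$, integer-valued (possible since all distances are integers).

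The main obstacle is the extra constraint $f(y)-f(x)=1$, which $g$ need not satisfy. My first attempt is to redefine $f$ on $x$ and $y$ only and keep $f=g$ elsewhere. I would take $f(x)=\max_{a\in A} g(a)-1$ and $f(y)=f(x)+1$, then check Lipschitz-ness against every other vertex and adjust if needed.

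If direct patching fails, the fallback is the idleness route. By the linearity of Ollivier curvature $\kappa_p$ in the idleness $p$ on $[\frac1{d+1},1]$ (the Bourne et al.\ result, equivalent to Definition~\ref{defn:llycurv} by \cite{MW2019}), we have $\kappa=\frac{d+1}{d}\kappa_{1/(d+1)}$. At $p=\frac1{d+1}$ both measures are uniform on $N[x]$ and $N[y]$. The triangle inequality lets the common mass on $N[x]\cap N[y]$ stay in place, so $W_1=\frac{\min C_\sigma}{d+1}$. This yields
\[
\kappa=\frac{d+1}{d}\Big(1-\frac{\min C_\sigma}{d+1}\Big)=1+\frac1d-\frac{\min C_\sigma}{d}.
\]
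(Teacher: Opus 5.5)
The paper does not prove this statement; it quotes it from Li--Lu. So your argument has to stand on its own.

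\textbf{Primary route.} Steps 1 and 2 are correct. So is the reduction to finding a $1$-Lipschitz $f$ with $f(y)-f(x)=1$ and $\sum_B f-\sum_A f=W:=\min_\sigma C_\sigma$. However, your proposed construction of such an $f$ fails. Take $C_6$ with $x=v_0$, $y=v_1$. Then $A=\{v_5\}$, $B=\{v_2\}$ and $W=3$, so every optimal $g$ has $g(v_2)=g(v_5)+3$. Your patch sets $f(y)=g(v_5)$ and keeps $f(v_2)=g(v_2)$, giving $f(v_2)-f(y)=3>d(y,v_2)$.

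The same failure occurs whenever an optimal pairing matches the maximiser of $g$ on $A$ with a vertex of $B$ at distance at least $2$, because complementary slackness forces $g$ to increase by that distance. ``Adjust if needed'' is exactly where the difficulty lies, since changing $f$ away from $\{x,y\}$ can lower $\sum_B f-\sum_A f$ below $W$.

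The repair is one line. For $A\neq\emptyset$, replace $g$ by its largest $1$-Lipschitz extension from $A$:
\[
f(v)=\min_{a\in A}\bigl(g(a)+d(a,v)\bigr).
\]
This $f$ is $1$-Lipschitz, agrees with $g$ on $A$, and satisfies $f\ge g$ everywhere, so $\sum_B f-\sum_A f\ge W$. Every $a\in A$ has $d(a,x)=1$ and $d(a,y)=2$, so $f(x)=\min_{A}g+1$ and $f(y)=\min_{A}g+2$. If $A=\emptyset$, take $f=d(x,\cdot)$ instead. Step 1 then gives $\kappa(x,y)\le 1+\frac1d-\frac Wd$. With this fix your primary route is a complete proof using only Definition~\ref{defn:llycurv} and LP duality.

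\textbf{Fallback route.} Your fallback is also a valid proof. It relies on Bourne et al.'s linearity of $p\mapsto\kappa_p(x,y)$ on $[\frac1{d+1},1]$ and on identifying Definition~\ref{defn:llycurv} with $\lim_{p\to1}\kappa_p(x,y)/(1-p)$. It gives both inequalities at once, so Steps 1--3 become unnecessary.

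Note, though, that it imports precisely the step you were missing. Put $m_x^p=p\delta_x+\frac{1-p}{d}\mathbf{1}_{N(x)}$. For $p\ge\frac1{d+1}$,
\[
m_y^p-m_x^p=\alpha(p)(\delta_y-\delta_x)+\beta(p)(\mathbf{1}_B-\mathbf{1}_A),
\]
with $\alpha(p)=p-\frac{1-p}{d}\ge 0$ and $\beta(p)=\frac{1-p}{d}\ge0$. Hence linearity on this interval is equivalent to the existence of a single $1$-Lipschitz $f$ with $f(y)-f(x)=1$ that is also an optimal potential for transporting $\mathbf{1}_A$ to $\mathbf{1}_B$. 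The citation buys brevity; the extension above gives a self-contained, elementary argument.
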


Theorem~\ref{thm:liluthm} has the very useful implication that for any edge $xy$ of a $d$-regular graph $G$, there must be a coupling $\sigma$ between $\mu_x$ and $\mu_y$ which has cost at most $d$. In particular, we have $\kappa(x, y) > 0$ if and only if there is a bijection $\phi: N(x)\setminus N[y] \rightarrow N(y)\setminus N[x]$ such that $\sum_{v\in N(x) \setminus N[y]}d(v, \phi(v)) \le d$. This characterization was independently proved by Hehl~\cite[Theorem 4.3]{H2024} (see also \cite[Theorem 2.3]{CLY2025}).  

We recall a useful lemma which provides a bound on LLY curvature if an edge is not contained in a $3$-cycle or a $4$-cycle.  

\begin{lemma}[Lin--Lu--Yau \cite{LLY2014}]\label{lem:c3c4}
    Suppose that an edge $xy$ in a graph $G$ is not in a $C_3$ or a $C_4$. Then,
    \[\kappa(x, y) \le \frac{1}{d_x} + \frac{2}{d_y} - 1.\]
\end{lemma}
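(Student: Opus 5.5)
We will bound $\kappa(x,y)$ from above by exhibiting a single admissible test function in Definition~\ref{defn:llycurv}. Since $\kappa(x,y)$ is an infimum of $\nabla_{xy}\Delta f$ over $1$-Lipschitz $f$ with $\nabla_{yx}f=1$, any one such $f$ gives an upper bound. For an edge $xy$ the constraint is $f(y)-f(x)=1$, and the quantity to compute is $\nabla_{xy}\Delta f=\Delta f(x)-\Delta f(y)$.

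\textbf{Step 1: define $f$ on $S=N[x]\cup N[y]$.} Set $f(x)=0$, $f(y)=1$, $f(u)=0$ for $u\in N(x)\setminus\{y\}$, and $f(w)=2$ for $w\in N(y)\setminus\{x\}$. This is well defined because $xy$ lies in no $C_3$, so $N(x)\setminus\{y\}$ and $N(y)\setminus\{x\}$ are disjoint.

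\textbf{Step 2: check that $f$ is $1$-Lipschitz on $S$.} This is where the hypotheses are used.
\begin{enumerate}
\item[(a)] Take $u\in N(x)\setminus\{y\}$ and $w\in N(y)\setminus\{x\}$. The edge $uw$ would create a $4$-cycle $xuwy$, and $u=w$ would create a triangle, so $d(u,w)\ge 2=|f(u)-f(w)|$. Note that distance exactly $2$ can occur via a $5$-cycle. This is why the neighbours of $x$ get the value $0$ rather than $-1$; the naive tree-like choice fails here.
\item[(b)] Since there is no triangle, $u\not\sim y$ and $w\not\sim x$. Hence $d(u,y)=2\ge 1$ and $d(x,w)=2=|f(x)-f(w)|$.
\item[(c)] All remaining pairs in $S$ are adjacent, equal, or at distance at least $2$ with value difference at most $1$.
\end{enumerate}

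\textbf{Step 3: extend to all of $V$.} Use the McShane extension $f(v)=\min_{s\in S}\bigl(f(s)+d(v,s)\bigr)$. It is $1$-Lipschitz, agrees with $f$ on $S$ because $f$ is already Lipschitz there, and is integer-valued. Only the values on $N[x]\cup N[y]$ enter $\Delta f(x)$ and $\Delta f(y)$, so the extension does not affect the computation below.

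\textbf{Step 4: compute.} We have
\[
\Delta f(x)=\frac{1}{d_x}\bigl[(1-0)+(d_x-1)(0-0)\bigr]=\frac{1}{d_x},
\]
\[
\Delta f(y)=\frac{1}{d_y}\bigl[(0-1)+(d_y-1)(2-1)\bigr]=\frac{d_y-2}{d_y}.
\]
Therefore
\[
\kappa(x,y)\le \Delta f(x)-\Delta f(y)=\frac{1}{d_x}+\frac{2}{d_y}-1.
\]

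The only delicate point is the Lipschitz verification in Step 2. Under the stated hypotheses, pairs $u\in N(x)$, $w\in N(y)$ can be at distance $2$, and this forces the asymmetric choice of values.
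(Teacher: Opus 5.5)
Your proof is correct. The test function ($0$ on $N[x]\setminus\{y\}$, $1$ at $y$, $2$ on $N(y)\setminus\{x\}$) is $1$-Lipschitz on $N[x]\cup N[y]$ for two reasons: the absence of a $C_3$ gives $N(x)\cap N(y)=\emptyset$ and $d(x,w)=2$, and the absence of a $C_4$ rules out edges $uw$. The McShane extension is legitimate and leaves unchanged the values that enter $\Delta f(x)$ and $\Delta f(y)$. The final computation is also right.

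The paper gives no proof of this lemma; it is quoted from \cite{LLY2014}, so there is no internal argument to compare against. Your argument is the standard one, written in limit-free form. Under the original definition via $\alpha$-lazy random walks, the same $f$ works as a Kantorovich-duality witness:
\[
W(m_x^\alpha,m_y^\alpha)\ \ge\ \sum_z f(z)\bigl(m_y^\alpha(z)-m_x^\alpha(z)\bigr)=\alpha+(1-\alpha)\Bigl(2-\frac{2}{d_y}-\frac{1}{d_x}\Bigr).
\]
Hence $\kappa_\alpha(x,y)\le(1-\alpha)\bigl(\frac{1}{d_x}+\frac{2}{d_y}-1\bigr)$, and dividing by $1-\alpha$ and letting $\alpha\to1$ gives the same bound. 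The M\"unch--Wojciechowski formula simply absorbs this limit.

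This is also the kind of argument the paper itself uses in Section 7, where test functions are specified only on $N[u_1]\cup N[u_2]$. Your explicit extension step supplies the justification that is left implicit there.
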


Lemma~\ref{lem:c3c4} implies that for $d\ge 3$ every edge in a positively curved $d$-regular graph is contained in a $3$-cycle or a $4$-cycle. 

\begin{lemma}\label{lem:dregplanc3c4}
Every edge in a positively curved $d$-regular graph with $3\le d$ is contained in either a $C_3$ or a $C_4$.
\end{lemma}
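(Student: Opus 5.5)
This is a direct consequence of Lemma~\ref{lem:c3c4}, and I would argue by contradiction. Let $G$ be a positively curved $d$-regular graph with $d \ge 3$. Suppose some edge $xy$ lies in neither a $C_3$ nor a $C_4$. Since $G$ is $d$-regular, $d_x = d_y = d$, so Lemma~\ref{lem:c3c4} gives
\[
\kappa(x,y) \le \frac{1}{d} + \frac{2}{d} - 1 = \frac{3}{d} - 1 .
\]

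For $d \ge 3$ we have $3/d \le 1$, so $\kappa(x,y) \le 0$. This contradicts the assumption that $G$ has positive LLY curvature on every vertex pair, and in particular on the edge $xy$. Hence every edge lies in a $C_3$ or a $C_4$.

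The argument has no real obstacle; the only point to check is the borderline case $d = 3$. There the bound is exactly $0$ rather than strictly negative. It still rules out positive curvature, because the hypothesis requires $\kappa(x,y) > 0$ strictly, not merely $\kappa(x,y) \ge 0$.

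If one preferred not to rely on Lemma~\ref{lem:c3c4}, the same conclusion follows from Theorem~\ref{thm:liluthm}. If $xy$ is in no $C_3$ and no $C_4$, then $x$ and $y$ have no common neighbors, so $N(x)\setminus N[y]$ and $N(y)\setminus N[x]$ each have $d-1$ elements. Moreover, no vertex of $N(x)\setminus\{y\}$ is adjacent to or equal to a vertex of $N(y)\setminus\{x\}$. Hence every pair $(u,v)$ with $u$ in the first set and $v$ in the second has $d(u,v) \ge 3$, since a distance of $1$ would create a $C_4$ and a distance of $0$ would create a $C_3$. Any coupling therefore costs at least $3(d-1)$, which exceeds $d$ once $d \ge 2$. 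By Theorem~\ref{thm:liluthm} this gives $\kappa(x,y) \le 1 + \frac{1}{d} - \frac{3(d-1)}{d} = \frac{4}{d} - 2 < 0$ for $d \ge 3$, again contradicting positive curvature. I would present the first argument and mention this one only as a consistency check.
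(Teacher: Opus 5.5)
Your main argument is correct and is exactly the paper's proof: set $d_x=d_y=d$ in Lemma~\ref{lem:c3c4} to get $\kappa(x,y)\le \frac{3}{d}-1\le 0$ for $d\ge 3$. You are also right that the borderline case $d=3$ still works, because positivity is strict.

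The alternative argument you offer as a consistency check contains a false step, however. Excluding $C_3$ and $C_4$ only gives $d(u,v)\ge 2$ for $u\in N(x)\setminus\{y\}$ and $v\in N(y)\setminus\{x\}$. If $u$ and $v$ have a common neighbor $w$, you get the $5$-cycle $xuwvy$, and nothing in the hypothesis rules that out.

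The Petersen graph is a concrete counterexample to your claimed bound. It has girth $5$ and diameter $2$, so every such pair is at distance exactly $2$ and the optimal coupling costs $4$. By Theorem~\ref{thm:liluthm} every edge then has $\kappa=1+\frac13-\frac43=0$, whereas your estimate would give $\kappa\le\frac43-2<0$.

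With the correct estimate the argument still goes through. The cost is at least $2(d-1)$, which exceeds $d$ exactly when $d\ge 3$, and Theorem~\ref{thm:liluthm} then gives $\kappa(x,y)\le 1+\frac1d-\frac{2(d-1)}{d}=\frac3d-1$. So the repaired alternative simply re-derives Lemma~\ref{lem:c3c4} in the regular case, with the same tight borderline at $d=3$.
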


\begin{proof}[Proof of Lemma~\ref{lem:dregplanc3c4}]
If an edge $xy$ in a positively curved $d$-regular graph is not contained in a $C_3$ or a $C_4$, then by Lemma~\ref{lem:c3c4}, 
\[\kappa(x, y) \le \frac{1}{d} + \frac{2}{d} -1 = \frac{3}{d} - 1 \le 0,\]
which is a contradiction. 
\end{proof}

\section{The positively curved $3$-regular graphs}

In this section, we prove Theorems~\ref{alld3} and \ref{d3} simultaneously by an exhaustive analysis of the possible local configurations in a positively curved $3$-regular graph. Suppose $G$ is a simple, $3$-regular positively curved graph.

\subsection*{Case 1: $K_4$}

Suppose there exists $ab \in E(G)$ such that $|N(a)\cap N(b)| = 2$ and let $N(a)\cap N(b) = \{c, d\}$. If $cd\in E(G)$, then $G \cong G[\{a,b,c,d\}] \cong K_4$ since $G$ is connected and 3-regular. The LLY curvature of each edge of $K_4$ is $\kappa_{LLY}(K_4) = 1 + \frac{1}{3} = \frac{4}{3} > 0$. In this case, $G \cong K_4$ is positively curved. 

Toward a contradiction, suppose that $cd \notin E(G)$, so that instead $ce\in E(G)$ for some new vertex $e$. If $de \notin E(G)$, then the edge $ce$ is not contained in any $C_3$ or $C_4$, so by Lemma~\ref{lem:dregplanc3c4} it follows that $\kappa(c, e) \le 0$. Now suppose that $de\in E(G)$. Since $G$ is $3$-regular, $e$ is adjacent to some new vertex $f$ which is not adjacent to any of $a, b, c, d$. Consider the two mass distributions \[\mu_c(v) = \begin{cases} 1 & \text{if } v\in \{a, b\}\\ 0 &\text{otherwise}\end{cases} \text{ and } \mu_e(v) = \begin{cases} 1 & \text{if } v\in \{d, f\}\\ 0 &\text{otherwise.}\end{cases}\] Any coupling between $\mu_c$ and $\mu_e$ must have cost at least $4$, so by Theorem~\ref{thm:liluthm}, we have that $\kappa(c, e) \le 1 + \frac{1}{3} - \frac{4}{3} = 0$, a contradiction of $G$ being positively curved.

Therefore, $G \cong K_4$.

\subsection*{Case 2: $C_3 \square P_2$}

Suppose $|N(x)\cap N(y)| \leq 1$ for all $xy\in E(G)$ and there exists $H\subseteq G$ such that $H \cong K_3$. Let $V(H)= \{a,b,c\}$ and $d, e, f$ be the distinct vertices such that $ad, be, cf \in E(G)$. We consider two subcases.

\textbf{Subcase 2.1:} Suppose $G[\{d, e, f\}] \not\cong K_3$. Then at least one of the vertices $d, e, f$ is adjacent to a vertex $g\in V(G)\setminus\{a,b,c,d,e,f\}$. Without loss of generality, assume $dg\in E(G)$. Note that if $de\notin E(G)$ and $df \notin E(G)$, then the edge $ad$ would not be contained in any $3$-cycle or $4$-cycle, so by Lemma \ref{lem:dregplanc3c4} the graph $G$ would not be positively curved. Hence, without loss of generality, we may suppose $de \in E(G)$. We must also have $fg \in E(G)$, as otherwise any coupling between $\mu_a$ and $\mu_d$ will cost at least $4$, implying that $\kappa(a, d) \le 0$ by Theorem~\ref{thm:liluthm}. Refer to Figure~\ref{fig:case2fig} for the remainder of the proof of Subcase 2.1. 

\begin{figure}[hbt]
\centering
\begin{tikzpicture}[scale=1]

\node[label=150:$a$] (0) at (90:0.5) {};
\node[label=270:$b$] (1) at (210:0.5) {};
\node[label=270:$c$] (2) at (330:0.5) {};
\node[label=90:$d$] (3) at (90:1.5) {};
\node[label=210:$e$] (4) at (210:1.5) {};
\node[label=330:$f$] (5) at (330:1.5) {};
\node[label=30:$g$] (6) at (30:0.75) {};
\draw (0) -- (1);
\draw (0) -- (2);
\draw (0) -- (3);
\draw (1) -- (2);
\draw (1) -- (4);
\draw (2) -- (5);
\draw (3) -- (4);
\draw (3) -- (6);
\draw (5) -- (6);

\end{tikzpicture}
\caption{Subcase 2.1.}
\label{fig:case2fig}
\end{figure}

If $ef \in E(G)$, then all vertices in $\{a,b,c,d,e,f\}$ are of degree 3. It follows that $gh \in E(G)$ for some vertex $h\in V(G)\setminus\{a,b,c,d,e,f,g\}$. Therefore, any coupling between $\mu_g$ and $\mu_h$ has cost at least 6, implying $\kappa(g,h) \leq 0$ by Theorem~\ref{thm:liluthm}. Hence, $ef \not \in E(G)$. Since $|N(c) \cap N(f)| = 0$ and $|E(N(c), N(f))| = 0$, the edge $cf$ is not contained in a $3$-cycle or $4$-cycle, so by Lemma \ref{lem:dregplanc3c4} the graph $G$ would not be positively curved. Hence, if $G[\{d,e,f\}] \not \cong K_3$, every possibility leads to a contradiction of $G$ being positively curved.

\textbf{Subcase 2.2:} Suppose $G[\{d, e, f\}]\cong K_3$. Then $G\cong C_3 \square P_2$ since $G$ is 3-regular and connected. The LLY curvature is computed for $C_3 \square P_2$ as follows: 

\[ \kappa_{LLY}(x,y) = 1 + \frac{1}{3} -\frac{2}{3}= \frac{2}{3} > 0, \] for the edges between triangles

\[ \kappa_{LLY}(x,y) = 1 + \frac{1}{3} -\frac{1}{3}= 1 > 0, \] for the edges on the triangles $abc$ and $def$. Thus, the only positively curved graph in this case is $C_3\square P_2$.

Henceforth, we may assume that the graph $G$ is triangle-free, so by  Lemma~\ref{lem:dregplanc3c4} every edge in $G$ is contained in a $4$-cycle. 

\subsection*{Case 3: $C_4\square P_2$ and $K_{3,3}$}

 By Theorem~\ref{thm:liluthm}, every edge $xy\in E(G)$ satisfies $\kappa(x, y) \ge \frac13$ in a positively curved $3$-regular graph $G$. This lower bound can be improved by a more careful analysis if it is additionally assumed that $G$ does not contain a $3$-cycle or a $5$-cycle. 

\begin{lemma}\label{lem:3regnoc3noc5}
Let $G$ be a positively curved $3$-regular graph which contains no $C_3$ or $C_5$. Then, for any edge $xy \in E(G)$, we have that $\kappa(x, y) = \frac23$. 
\end{lemma}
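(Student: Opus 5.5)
Fix an edge $xy$. Since $G$ has no $C_3$, $N(x)\cap N(y)=\emptyset$, so $N(x)\setminus N[y]=\{a_1,a_2\}$ and $N(y)\setminus N[x]=\{b_1,b_2\}$. By Theorem~\ref{thm:liluthm}, $\kappa(x,y)=1+\frac13-\frac{C}{3}$, where $C$ is the minimum cost of a bijection $\phi:\{a_1,a_2\}\to\{b_1,b_2\}$. The plan is to show $C=2$ exactly, which gives $\kappa(x,y)=\frac43-\frac23=\frac23$.

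For the lower bound $C\ge 2$: no $a_i$ equals any $b_j$, since a common vertex would lie in $N(x)\cap N(y)$. Hence every term $d(a_i,\phi(a_i))$ is at least $1$, and $C\ge 2$.

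For the upper bound, positivity gives $C\le 3$. The point is to exclude a cost-$3$ optimum, i.e.\ to show that a cost-$3$ bijection cannot be optimal. First, each $d(a_i,b_j)$ lies in $\{1,2,3\}$, since $a_i x y b_j$ is a path. Also $d(a_i,b_j)\neq 2$. Indeed, if $a_i w b_j$ were a path, then $x a_i w b_j y x$ would be a closed walk of length $5$. Distances to $x$ and $y$ rule out $w=x$ and $w=y$ (a triangle would appear), and $a_i\ne b_j$. So the walk is a genuine $5$-cycle, which is forbidden. Therefore every distance is $1$ or $3$, and every bijection has cost in $\{2,4,6\}$. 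Combined with $C\le 3$, this forces $C=2$.

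The main obstacle is checking that the walk $x a_i w b_j y$ really is a $5$-cycle, meaning its five vertices are distinct. The cases are: $w\notin\{x,y\}$, because $w=x$ would make $x b_j$ an edge and create a triangle $x y b_j$ (and symmetrically for $w=y$); $a_i\neq b_j$, shown above; and $w$ distinct from $a_i$ and $b_j$ trivially. Once this is settled, the parity-type argument above finishes the proof.
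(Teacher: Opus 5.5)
Your proposal is correct and takes the same approach as the paper. Both arguments exclude distance $0$ (no triangle) and distance $2$ (no $5$-cycle) between $\{a_1,a_2\}$ and $\{b_1,b_2\}$, so positivity (an optimal bijection of cost at most $3$) forces two distance-$1$ pairs and hence $\kappa(x,y)=\frac43-\frac23=\frac23$. Your extra check that $xa_iwb_jyx$ has five distinct vertices supplies a detail the paper leaves implicit.
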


\begin{proof}[Proof of Lemma~\ref{lem:3regnoc3noc5}]
Write $N(x)\setminus\{y\} = \{a_1, a_2\}$ and $N(y)\setminus\{x\} = \{b_1, b_2\}$. Note that for $i, j\in \{1, 2\}$, we have that $d(a_i, b_j) \ge 1$, as other wise $xa_iy$ would form a $3$-cycle, and also $d(a_i, b_j) \neq 2$, as otherwise $x$, $y$ and the vertices of a length-two $a_ib_i$-path would form a $5$-cycle. These observations imply that an integer-valued coupling between $\mu_x$ and $\mu_y$ can have cost at most $3$ only if, say, $d(a_1, b_1) = 1$ and $d(a_2, b_2) = 1$.  Theorem~\ref{thm:liluthm} then implies that 
\[\kappa(x, y) =1 + \frac13 - \frac23 = \frac23.\]
\end{proof}

We now recall the following theorem of Gamlath, Liu, Lu and Yuan. 

\begin{theorem}[\cite{GLLY2023}]\label{thm:glly}
Let $G$ be a simple connected graph on $n$ vertices containing no $C_3$ or $C_5$. If for every edge $xy\in E(G)$, we have $\kappa(x, y) \ge \kappa > 0$, then 
\[n \le 2^{2/\kappa}.\]
\end{theorem}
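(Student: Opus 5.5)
The plan is to imitate the extremal example, the hypercube $Q_m$. There every edge has LLY curvature $2/m$ and $|V|=2^m$, so the bound $n\le 2^{2/\kappa}$ is tight. The proof has two parts: bound every degree by $2/\kappa$ using one edge at a time, then bound $n$ by a layer-counting (binomial) argument from a fixed root.

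\emph{Step 1 (local structure of an edge).} Fix an edge $xy$. Since $G$ has no $C_3$, no neighbour of $x$ other than $y$ is adjacent to $y$. Since $G$ has no $C_5$, for $a\in N(x)\setminus\{y\}$ and $b\in N(y)\setminus\{x\}$ we cannot have $d(a,b)=2$, exactly as in the proof of Lemma~\ref{lem:3regnoc3noc5}. Hence $d(a,b)\in\{1,3\}$, and the edges between $N(x)\setminus\{y\}$ and $N(y)\setminus\{x\}$ form a matching. Two edges $ab$ and $ab'$ would give the $5$-cycle $xab'yb\ldots$; more precisely, $b$ and $b'$ would be at distance $2$ via $a$ and also via $y$, which one should check is excluded by the $C_3$ condition together with the transport cost. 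Let $m_{xy}$ be the size of this matching. Using the transport (coupling) formulation of LLY curvature for general degrees, i.e.\ the non-regular version of Theorem~\ref{thm:liluthm}, I will compute $\kappa(x,y)$ exactly as a function of $d_x,d_y,m_{xy}$. Matched neighbours cost $1$, everything else costs $3$, and the mass at $x,y$ can be moved at cost $\le 1$. The target is an inequality of the form
\[\kappa(x,y)\le \frac{2}{\max(d_x,d_y)},\]
which in the regular case is $\kappa=\frac{2+2m_{xy}-2(d-1)}{d}\cdot(\ldots)$-type bookkeeping. The consequences are $\Delta(G)\le 2/\kappa$ and, by positivity, $m_{xy}$ close to $\min(d_x,d_y)-1$, so every edge lies in many $4$-cycles.

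\emph{Step 2 (layer structure).} Fix a root $r$ and let $S_k$ be the sphere of radius $k$. Since $G$ has no odd cycles of length $3$ or $5$, there are no edges inside $S_1$ or $S_2$. More generally, I will aim to show that $G$ behaves bipartitely near each vertex. The key claim is that every $v\in S_k$ has at least $k$ neighbours in $S_{k-1}$, and therefore at most $\Delta-k$ neighbours in $S_{k+1}$. I would prove this by induction on $k$ using Step 1. Take a down-neighbour $u$ of $v$. Each of the $\ge k-1$ down-neighbours $w$ of $u$ must be matched, across the edge $uv$, to a neighbour of $v$. Because $m_{uv}$ is nearly full, these partners are forced into $S_{k-1}$. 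Together with $u$ itself, this gives $k$ down-neighbours.

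\emph{Step 3 (counting).} Double count the edges between $S_k$ and $S_{k+1}$:
\[(k+1)|S_{k+1}|\le(\Delta-k)|S_k|.\]
This yields $|S_k|\le\binom{\Delta}{k}$, hence $n\le 2^{\Delta}\le 2^{2/\kappa}$.

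The main obstacle is Step 2 when degrees vary. The matching in Step 1 is only guaranteed to be of size roughly $\min(d_u,d_v)-1-(\text{slack})$, where the slack depends on $\kappa$. So the induction must be run with a weighted quantity in place of the raw count $k$, for instance $\sum_{w}\kappa$ along geodesics, or a bound of the form ``the number of down-neighbours is at least $\kappa d(r,v)/2\cdot\Delta$''. The final inequality will then be the counting inequality with $\Delta$ replaced by $2/\kappa$. I would first try the clean regular case to fix the argument, and then replace degrees by $2/\kappa$ throughout.
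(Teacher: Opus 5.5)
The paper does not prove this statement: it quotes it from \cite{GLLY2023} and only applies it to $3$-regular graphs. For $d$-regular graphs your argument is essentially complete, with two corrections.

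\begin{itemize}
\item The edges between $N(x)\setminus\{y\}$ and $N(y)\setminus\{x\}$ need not form a matching. Nothing excludes $ab,ab'$: the graph $K_{2,3}$ is bipartite, and in $K_{3,3}$ these edges form a $K_{2,2}$. So $m_{xy}$ must be the maximum matching size. This is harmless.
\item ``Nearly full'' would not drive your induction, but in the regular case you get more. All cross distances are $1$ or $3$, so Theorem~\ref{thm:liluthm} gives $\kappa(x,y)=(4-2d+2m_{xy})/d$. Positivity then forces $m_{xy}=d-1$, a perfect matching, and $\kappa(x,y)=2/d$.
\end{itemize}

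With a perfect matching across $uv$, every down-neighbour of $u$ has a distinct partner in $N(v)\cap S_{k-1}$. Edges inside spheres are irrelevant, and Steps 2--3 give $n\le 2^d\le 2^{2/\kappa}$. The Step 1 inequality $\kappa(x,y)\le 2/\max(d_x,d_y)$ also holds in general: test the limit-free formula with $f=d(\cdot,x)$ and $f=-d(\cdot,y)$ in a triangle-free graph.

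The genuine gap is the non-regular case, which the theorem covers; there your Step 2 claim is false. The star $K_{1,m}$ with $m\ge 2$ has no $C_3$ or $C_5$, and every edge has curvature $2/m$. Rooted at a leaf, each of the $m-1$ vertices of $S_2$ has a single down-neighbour. Step 3 would then give $|S_2|\le (m-1)/2$, which is false.

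The mechanism is that positivity only forces a matching saturating the neighbourhood of the \emph{lower}-degree endpoint. Take nonempty $A'\subseteq N(u)\setminus\{v\}$ and set $B'=N(A')\cap N(v)\setminus\{u\}$. A $1$-Lipschitz test function ($-1$ on $A'$, $0$ on $B'$ and $u$, $1$ and $2$ elsewhere) gives $\kappa(u,v)\le \frac{2+2|B'|}{d_v}-\frac{2|A'|}{d_u}$. This is $\le 0$ only when $|B'|<|A'|$ and $d_u\le d_v$. So when $u\in S_{k-1}$ has $d_u>d_v$, the down-neighbours of $u$ need not have partners at $v$.

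Your ``weighted quantity'' is never specified, so the central step of the general proof is missing. A natural candidate is the Bonnet--Myers test function $f=d(r,\cdot)$, which gives $d^+(v)-d^-(v)\le d_v(1-k\kappa)$ on $S_k$. If there are no edges inside spheres, this yields $d^-(v)\ge k\kappa d_v/2$. That is your second suggestion with $d_v$ in place of $\Delta$; the $\Delta$ version fails for the star. Writing $W_k=\sum_{v\in S_k}d_v$ and $m=2/\kappa$, it gives $W_{k+1}\le\frac{m-k}{k+1}W_k$. From a minimum-degree root this gives $|S_k|\le\binom{m}{k}$, which sums to at most $2^{m}$. However, $C_3$- and $C_5$-freeness does not exclude edges inside $S_k$ for $k\ge 3$: the folded $7$-cube has $\kappa=2/7$ and is not bipartite. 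That route would still need a way to absorb such edges when degrees vary.
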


Suppose $G$ does not contain a $3$-cycle or a $5$-cycle. Lemma~\ref{lem:3regnoc3noc5} and Theorem~\ref{thm:glly} imply that the number of vertices in $G$ is at most $2^{2/(2/3)} = 8$. One can check directly by an enumeration of the $3$-regular planar graphs on at most $8$ vertices that the only such graphs which are positively curved and contain no $3$-cycle or $5$-cycle are the complete bipartite graph $K_{3, 3}$ and the cube $C_4\square P_2$. The cube is planar, while $K_{3, 3}$ is not.

\subsection*{Case 4: $C_5\square P_2$ and the Wagner graph}

Suppose $G$ does not contain a $3$-cycle but does contain a $5$-cycle. Let $C =\{a_1,a_2,a_3,a_4,a_5\}$ be a set of vertices that make up a $5$-cycle in $G$ where $a_ia_{i+1}\in E(G)$ for $1\le i\le 4$ and $a_5a_1\in E(G)$. As $G$ is triangle-free, none of the interior chords in the $5$-cycle can be edges in the graph, so each of the vertices of the $5$-cycle is adjacent to some vertex in $V(G) \setminus C$. For any $x \in V(G)\setminus C$, we have $|N(x) \cap C| \leq 2$ since $G$ is triangle-free. It follows that $|N(C)| \in \{3, 4, 5\}$. We consider each subcase.

{\bf Subcase 4.1: $|N(C)| =5 $.}

Let $N(C) = \{b_1,b_2,b_3,b_4,b_5\}$ such that $a_ib_i \in E(G)$ for $1\leq i\leq 5$ as shown in Figure~\ref{fig:Case4sub1}.

Recall by Lemma~\ref{lem:dregplanc3c4} that each edge must be contained in a $4$-cycle. Therefore, the edge $a_1a_2$ must be contained in a $4$-cycle and since the $b_i$s are distinct vertices and the graph is $3$-regular, this is only possible if $b_1b_2 \in E(G)$. Similarly, we must have that $b_2b_3, b_3b_4, b_4b_5, b_5b_1 \in E(G)$. The resulting graph $G$ is isomorphic to $C_5 \square P_2$, which is positively curved, $3$-regular and planar. Thus, in this subcase we must have $G \cong C_5\square P_2$. 

\begin{figure}[ht]
\centering
\begin{tikzpicture}[scale=2]
\node[label=135:$b_1$] (0) at (90:0.4) {};
\node[label=207:$b_2$] (1) at (162:0.4) {};
\node[label=279:$b_3$] (2) at (234:0.4) {};
\node[label=0:$b_4$] (3) at (306:0.4) {};
\node[label=90:$b_5$] (4) at (18:0.4) {};
\node[label=90:$a_1$] (5) at (90:1) {};
\node[label=162:$a_2$] (6) at (162:1) {};
\node[label=234:$a_3$] (7) at (234:1) {};
\node[label=306:$a_4$] (8) at (306:1) {};
\node[label=18:$a_5$] (9) at (18:1) {};
\draw (5) -- (6);
\draw (6) -- (7);
\draw (7) -- (8);
\draw (8) -- (9);
\draw (9) -- (5);
\draw (0) -- (5);
\draw (1) -- (6);
\draw (2) -- (7);
\draw (3) -- (8);
\draw (4) -- (9);
\end{tikzpicture}
\caption{Subcase 4.1}
\label{fig:Case4sub1}
\end{figure}

{\bf Subcase 4.2: $|N(C)| =4 $.}

Let $N(C) = \{b,c,d,e\}$ such that $a_1b, a_2c, a_3d, a_4e, a_5c\in E(G)$ as shown in Figure~\ref{fig:Case4sub2}.

We claim that there are no positively curved $3$-regular graphs in this case. Consider the edge $a_2a_3$. We have $N(a_2)\setminus\{a_3\} = \{a_1, c\}$ and $N(a_3)\setminus\{a_2\}=\{a_4, d\}$. Now, $d(a_1, d) \ge 2$, $d(a_1, a_4) = 2$ and $d(a_4, c) \ge 2$. Thus, in order for there to be an integer-valued coupling between $\mu_{a_2}$ and $\mu_{a_3}$ with cost at most $3$, we must have that $d(c, d) = 1$, \textit{i.e.}, that $cd\in E(G)$. Similarly, in order for the edge $a_4a_5$ to be positively curved, we must have that $ce\in E(G)$. But now the degree of $c$ is at least $4$, a contradiction to $G$ being $3$-regular. 

\begin{figure}[ht]
\centering
\begin{tikzpicture}[scale=2]
\node[label=135:$b$] (0) at (90:0.4) {};
\node[label=90:$c$] (1) at (0:0) {};
\node[label=180:$d$] (2) at (234:0.4) {};
\node[label=0:$e$] (3) at (306:0.4) {};
\node[label=90:$a_1$] (5) at (90:1) {};
\node[label=162:$a_2$] (6) at (162:1) {};
\node[label=234:$a_3$] (7) at (234:1) {};
\node[label=306:$a_4$] (8) at (306:1) {};
\node[label=18:$a_5$] (9) at (18:1) {};
\draw (5) -- (6);
\draw (6) -- (7);
\draw (7) -- (8);
\draw (8) -- (9);
\draw (9) -- (5);
\draw (0) -- (5);
\draw (1) -- (6);
\draw (1) -- (9);
\draw (2) -- (7);
\draw (3) -- (8);
\end{tikzpicture}
\caption{Subcase 4.2}
\label{fig:Case4sub2}
\end{figure}
{\bf Subcase 4.3: $|N(C)| =3 $.}

Let $N(C) = \{b,c,d\}$ such that $a_1b, a_2c, a_3d, a_4c, a_5d \in E(G)$ as shown in Figure~\ref{fig:Case4sub3}.

Consider the edge $a_1a_2$ in this graph. We have $N(a_1)\setminus\{a_2\} = \{a_5, b\}$ and $N(a_2)\setminus\{a_1\} = \{a_3, c\}$. We compute $d(a_3, a_5) =2$, $d(a_3, b)\ge 2$ and $d(a_5,c)\ge 2$. Therefore, in order for there to be an integer-valued coupling between $\mu_{a_1}$ and $\mu_{a_2}$ with cost at most $3$, we must have that $bc \in E(G)$. Similarly, in order for the edge $a_1a_5$ to be positively curved, we must have that $bd\in E(G)$. The resulting graph $G$ is isomorphic to the Wagner graph, which is $3$-regular, positively curved and nonplanar.
\begin{figure}[ht]
\centering
\begin{tikzpicture}[scale=2]
\node[label=135:$b$] (0) at (90:0.4) {};
\node[label=270:$c$] (1) at (162:0.4) {};
\node[label=270:$d$] (4) at (18:0.4) {};
\node[label=90:$a_1$] (5) at (90:1) {};
\node[label=162:$a_2$] (6) at (162:1) {};
\node[label=234:$a_3$] (7) at (234:1) {};
\node[label=306:$a_4$] (8) at (306:1) {};
\node[label=18:$a_5$] (9) at (18:1) {};
\draw (5) -- (6);
\draw (6) -- (7);
\draw (7) -- (8);
\draw (8) -- (9);
\draw (9) -- (5);
\draw (0) -- (5);
\draw (1) -- (6);
\draw (4) -- (9);
\draw (4) -- (7);
\draw (1) -- (8);
\end{tikzpicture}
\caption{Subcase 4.3}
\label{fig:Case4sub3}
\end{figure}

This completes the casework analysis and completely determines the positively curved $3$-regular graphs. 

\section{Diameter bounds for $d$-regular graphs}
Recall that for a positively curved $d$-regular graph, the Bonnet-Myers theorem for LLY curvature implies $\diam(G) \le 2d$. In this section, we improve this general bound to $\diam(G) \le 2d-2$. 

Suppose $G$ is a $d$-regular graph with diameter $L$ and let $P = x_0x_1\dotsm x_L$ be a diameter path in $G$. We first consider the case that two consecutive internal vertices along the path do not have any common neighbors. Note that in the following two lemmas we do not assume that $G$ is positively curved. 

\begin{lemma}\label{lem:dregedgenocmnnbhrs}
Let $1\le i\le L-2$. Suppose that the vertex $x_i$ is of type $(-^{a_1}, 0^{b_1}, +^{c_1})$ and that the vertex $x_{i+1}$ is of type $(-^{a_2}, 0^{b_2}, +^{c_2})$. Let $s\ge 0$ be a nonnegative integer and suppose that $c_1 - a_1 \le c_2 - a_2 - s$ and that $x_i$ and $x_{i+1}$ do not have any common neighbors. Then, for any bijection $\phi:N(x_i)\setminus\{x_{i+1}\} \rightarrow N(x_{i+1})\setminus\{x_i\}$, we have that $\sum_{v\in N(x_i)\setminus\{x_{i+1}\}}d(v, \phi(v)) \ge d+1+s. $
\end{lemma}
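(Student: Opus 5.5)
The plan is a potential-function (Lipschitz) argument. I will use $f(v) = d(v, x_L)$, which is $1$-Lipschitz, as a lower bound on transport cost. For any bijection $\phi$ we have
\[
\sum_{v} d(v,\phi(v)) \ge \sum_{v} \bigl(f(v) - f(\phi(v))\bigr) = \sum_{v\in N(x_i)\setminus\{x_{i+1}\}} f(v) - \sum_{w\in N(x_{i+1})\setminus\{x_i\}} f(w).
\]
This right-hand side does not depend on $\phi$. So it suffices to compute the two sums of $f$ from the types of $x_i$ and $x_{i+1}$ and show that their difference is at least $d+1+s$.

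First I set $D = d(x_i, x_L)$. Since $P$ is a diameter path, $d(x_j, x_L) = L - j$. Hence $f(x_{i-1}) = D+1$, $f(x_{i+1}) = D-1$ and $f(x_{i+2}) = D-2$. The hypothesis that $x_i$ and $x_{i+1}$ have no common neighbors means $N(x_i)\setminus\{x_{i+1}\}$ is exactly the domain of $\phi$, and $N(x_{i+1})\setminus\{x_i\}$ is exactly its codomain. The type of a vertex then determines the $f$-values on these sets, by the definition of sign.

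For the neighbors of $x_i$ other than $x_{i+1}$, the values are as follows. The vertex $x_{i-1}$ and the $a_1$ neighbors of sign $-$ have $f = D+1$. The $b_1$ neighbors of sign $0$ have $f = D$. The $c_1$ neighbors of sign $+$ have $f = D-1$. Using $a_1+b_1+c_1 = d-2$, the first sum equals $(d-1)D + 1 + a_1 - c_1$.

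For the neighbors of $x_{i+1}$ other than $x_i$, the values are: $x_{i+2}$ has $f = D-2$; the $a_2$ sign-$-$ neighbors have $f = D$; the $b_2$ sign-$0$ neighbors have $f = D-1$; the $c_2$ sign-$+$ neighbors have $f = D-2$. Using $a_2+b_2+c_2 = d-2$, the second sum equals $(d-1)D - 2 - 2c_2 - b_2$.

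Subtracting the two sums and substituting $b_2 = d-2-a_2-c_2$ gives the difference
\[
3 + a_1 - c_1 + 2c_2 + b_2 = d+1 + (c_2 - a_2) - (c_1 - a_1) \ge d+1+s,
\]
where the last inequality is the hypothesis $c_1 - a_1 \le c_2 - a_2 - s$.

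The only delicate point is bookkeeping: checking that every vertex in the two neighborhoods is accounted for exactly once, with the correct $f$-value. This includes the path vertices $x_{i-1}$ and $x_{i+2}$, and uses the hypothesis $1\le i\le L-2$ so that both exist. I expect no genuine obstacle beyond this.
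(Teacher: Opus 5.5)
Your proof is correct, and it takes a genuinely different route from the paper's.

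\textbf{The paper's route.} The paper argues combinatorially on the bijection itself:
\begin{enumerate}
\item The no-common-neighbor hypothesis gives a baseline cost of $d-1$.
\item Let $k$ be the number of pairs that match a vertex at distance $D+1$ from $x_L$ (namely $x_{i-1}$ or a sign-$-$ neighbor of $x_i$) to a vertex at distance $D-2$ ($x_{i+2}$ or a sign-$+$ neighbor of $x_{i+1}$). Each such pair costs at least $3$.
\item A counting argument on the remaining pairs involving these two classes yields at least $s-2k+2$ further pairs of cost at least $2$.
\end{enumerate}

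\textbf{Your route.} You use the easy direction of Kantorovich duality with the $1$-Lipschitz potential $f=d(\cdot,x_L)$. After routine bookkeeping this gives the $\phi$-independent lower bound $d+1+(c_2-a_2)-(c_1-a_1)$. I checked your two sums, $(d-1)D+1+a_1-c_1$ and $(d-1)D-2-2c_2-b_2$, and they are right.

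\textbf{What your route buys.} It proves more than the statement asks.
\begin{itemize}
\item You never actually use the no-common-neighbor hypothesis. It does not determine the domain of $\phi$; the statement already fixes the domain.
\item A common neighbor $w$ contributes the same value $f(w)$ to both sums. So removing common neighbors from both sides leaves the difference unchanged.
\item Hence your computation proves Lemma~\ref{lem:dregedge} directly. You do not need the reduction step that the paper introduces the parameter $s$ to absorb.
\item Equivalently, inserting $-d(\cdot,x_L)$ into Definition~\ref{defn:llycurv} gives $\kappa(x_i,x_{i+1})\le \bigl((c_1-a_1)-(c_2-a_2)\bigr)/d$. This is Corollary~\ref{cor:dregedge} in one line.
\end{itemize}

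\textbf{What the paper's route buys.} It stays entirely within the matching language of Theorem~\ref{thm:liluthm} and shows explicitly which pairs of a cheap matching must be expensive. The price is more casework and a separate common-neighbor reduction.
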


\begin{proof}[Proof of Lemma~\ref{lem:dregedgenocmnnbhrs}]
Denote the set of neighbors of $x_i$ with sign $-$ by $N_i$, the set of neighbors of $x_i$ with sign $0$ by $Z_i$ and the set of neighbors of $x_i$ with sign $+$ by $P_i$, so that $|N_i| = a_1$, $|Z_i|=b_1$ and $|P_i|=c_1$. The sets $N_{i+1}, Z_{i+1}, P_{i+1}$ are defined analogously for the vertex $x_{i+1}$. Since $x_i$ and $x_{i+1}$ have no common neighbors, we trivially have $\sum_{v\in N(x_i)\setminus \{x_{i+1}\}}d(v, \phi(v)) \ge d-1$ for any bijection $\phi:N(x_i)\setminus\{x_{i+1}\} \rightarrow N(x_{i+1})\setminus\{x_i\}$. We show that the assumption $c_1 - a_1 \le c_2 - a_2 - s$ leads to an improvement of this easy lower bound. 

Suppose that in the bijection $\phi$ there are exactly $k$ pairs $(u_1, v_1), \ldots (u_k, v_k)$ with $u_m\in N_i \cup \{x_{i-1}\}$ and $v_m\in P_{i+1} \cup \{x_{i+2}\}$ for $1\le m\le k$. Each of these pairs has $d(u_m, v_m)\ge 3$, so we already obtain the improved bound $\sum_v d(v, \phi(v)) \ge d-1+2k$. If $2k\ge s+2$, then it immediately follows that \[\sum_{v\in N(x_i)\setminus\{x_{i+1}\}} d(v, \phi(v)) \ge d-1+2k \ge d-1+s+2 = d+1+s.\] Now assume that $s > 2k-2$. Consider the sets of pairs $A = \{(u, \phi(u)): u\in (N_i \cup \{x_{i-1}\})\setminus \{u_1, \ldots, u_k\}\}$ and $B = \{(\phi^{-1}(v), v): v\in (P_{i+1} \cup \{x_{i+2}\}) \setminus \{v_1, \ldots v_k\}\}$. By construction, $A$ and $B$ are disjoint sets, $|A| = a_1 + 1 - k$ and $|B| = c_2 + 1 - k$. Also, for $(u, \phi(u))\in A$, we have $d(u, \phi(u)) \ge 2$ unless $\phi(u)\in N_{i+1}$, and similarly $d((\phi^{-1}(v), v)) \ge 2$ for $(\phi^{-1}(v), v) \in B$ unless $\phi^{-1}(v)\in P_i$. We may rearrange the inequality $c_1 - a_1 \le c_2 - a_2 - s$ to get 
\[((a_1 + 1 -k)-a_2) + ((c_2 + 1 -k) - c_1) \ge s-2k + 2\]

\[ \iff (|A| - |N_{i+1}|) + (|B|-|P_i|) \ge s-2k + 2.\]
In particular, this inequality implies that if $s> 2k - 2$, then there are at least $s-2k + 2$ pairs from $A\cup B$ which have distance at least $2$. It follows that 
\[\sum_{v\in N(x_i)\setminus\{x_{i+1}\}} d(v, \phi(v))\ge d-1+2k+(s-2k+2) = d+1+s,\]
completing the proof. 
\end{proof}

Using Lemma~\ref{lem:dregedgenocmnnbhrs}, we can handle the general case where $x_i$ and $x_{i+1}$ may have common neighbors. 

\begin{lemma}\label{lem:dregedge}
Let $1\le i\le L-2$. Suppose that the vertex $x_i$ is of type $(-^{a_1}, 0^{b_1}, +^{c_1})$ and that the vertex $x_{i+1}$ is of type $(-^{a_2}, 0^{b_2}, +^{c_2})$. Suppose that $c_1 - a_1 \le c_2 - a_2$. Then, for any bijection $\phi:N(x_i)\setminus N[x_{i+1}] \rightarrow N(x_{i+1}) \setminus N[x_i]$, we have $\sum_{v\in N(x_i)\setminus N[x_{i+1}]}d(v, \phi(v)) \ge d+1$. 
\end{lemma}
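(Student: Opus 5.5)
The plan is to reduce to Lemma~\ref{lem:dregedgenocmnnbhrs} by deleting the common neighbors of $x_i$ and $x_{i+1}$. A common neighbor changes the sign bookkeeping in a controlled way. Each deleted vertex also shrinks the domain of $\phi$ by one, and this loss will be exactly offset by the gain in the parameter $s$.

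\textbf{Step 1: classify the common neighbors.} Let $W = N(x_i)\cap N(x_{i+1})$ and $t=|W|$. By Lemma~\ref{lem:signsnbrs}, no $w\in W$ has sign $-$ with respect to $x_i$. If $w$ has sign $0$ with respect to $x_i$, then it has sign $-$ with respect to $x_{i+1}$. If $w$ has sign $+$ with respect to $x_i$, then it has sign $0$ with respect to $x_{i+1}$. Write $W = W_0\cup W_+$ according to the sign with respect to $x_i$, with $t_0=|W_0|$ and $t_+=|W_+|$. Note that $x_{i-1},x_{i+2}\notin W$, since $P$ is a shortest path.

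\textbf{Step 2: compute the reduced types.} After removing $W$, the neighbors of $x_i$ outside $N[x_{i+1}]$ consist of $x_{i-1}$ together with $a_1'=a_1$ vertices of sign $-$, $b_1'=b_1-t_0$ of sign $0$, and $c_1'=c_1-t_+$ of sign $+$. Symmetrically, for $x_{i+1}$ we get $x_{i+2}$ together with $a_2'=a_2-t_0$, $b_2'=b_2-t_+$ and $c_2'=c_2$. Then
\[c_1'-a_1' = c_1-a_1-t_+ \le c_2-a_2-t_+ = c_2'-a_2'-(t_0+t_+) = c_2'-a_2'-t.\]
So the hypothesis of Lemma~\ref{lem:dregedgenocmnnbhrs} holds with $s=t$ for the reduced configuration.

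\textbf{Step 3: rerun the argument of Lemma~\ref{lem:dregedgenocmnnbhrs} on the reduced sets.} That lemma is stated for a $d$-regular graph without common neighbors, so it cannot be quoted verbatim. Its proof, however, uses only three facts:
\begin{itemize}
\item the two sets being matched are disjoint, so every pair has distance $\ge 1$;
\item the distance lower bounds forced by $d(\cdot,x_L)$, namely distance $\ge 3$ between $N_i\cup\{x_{i-1}\}$ and $P_{i+1}\cup\{x_{i+2}\}$, and distance $\ge 2$ in the other cases used there;
\item the counting identity.
\end{itemize}
All three hold for the sets $N(x_i)\setminus N[x_{i+1}]$ and $N(x_{i+1})\setminus N[x_i]$, each of size $d-1-t$, with the reduced counts $a_j',c_j'$.

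Rerunning the argument, suppose there are $k$ pairs of cost $\ge 3$. If $2k\ge t+2$, we are done. Otherwise, define $A$ and $B$ as in that proof. We have $(|A|-|N'_{i+1}|)+(|B|-|P'_i|)\ge t-2k+2$, and this yields that many extra pairs of cost $\ge 2$. The total cost is therefore at least
\[(d-1-t)+2k+(t-2k+2)=d+1.\]

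The main point to verify carefully, and the only real obstacle, is Step 3. One must check that the proof of Lemma~\ref{lem:dregedgenocmnnbhrs} nowhere uses that the domain has size exactly $d-1$, beyond the baseline term. One must also check that the sets $A$ and $B$ remain disjoint with the stated sizes after removing $W$. Both depend only on sign and distance data, which the removal of $W$ does not affect.
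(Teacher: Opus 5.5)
Your proposal is correct and follows the paper's proof. You classify the common neighbors via Lemma~\ref{lem:signsnbrs}, pass to the reduced types $(-^{a_1},0^{b_1-t_0},+^{c_1-t_+})$ and $(-^{a_2-t_0},0^{b_2-t_+},+^{c_2})$, and apply Lemma~\ref{lem:dregedgenocmnnbhrs} with $s=t$, exactly as the paper does. Your Step~3, which reruns that lemma's argument on the restricted sets with all distances still measured in $G$, is the precise version of the paper's informal step of ignoring the common neighbors and treating the degree as $d-k-\ell$.
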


\begin{proof}[Proof of Lemma~\ref{lem:dregedge}]
By Lemma~\ref{lem:signsnbrs}, none of the $a_1$ neighbors of $x_i$ with sign $-$ are also neighbors of $x_{i+1}$, while the neighbors of $x_{i}$ with sign $0$ which are also neighbors of $x_{i+1}$ have sign $-$ with respect to $x_{i+1}$ and the neighbors of $x_i$ with sign $+$ which are also neighbors of $x_{i+1}$ have sign $0$ with respect to $x_{i+1}$. Suppose that $k$ of the neighbors of $x_i$ of sign $0$ are also neighbors of $x_{i+1}$ and $\ell$ of the neighbors of $x_i$ of sign $+$ are also neighbors of $x_{i+1}$. By ignoring these common neighbors, we may consider $x_i$ and $x_{i+1}$ to be vertices which have no common neighbors and each of which has degree $d - k - \ell$. Furthermore, after this vertex deletion, $x_i$ is of type $(-^{a_1}, 0^{b_1-k}, +^{c_1-\ell})$ and $x_{i+1}$ is of type $(-^{a_2-k}, 0^{b_2-\ell}, +^{c_2})$. Finally, the assumed inequality $c_1 - a_1 \le c_2-a_2$ is equivalent to $(c_1 - \ell) - a_1\le c_2 - (a_2 - k) - (k + \ell)$. Therefore, by Lemma~\ref{lem:dregedgenocmnnbhrs}, for any bijection $\phi:N(x_i)\setminus N[x_{i+1}] \rightarrow N(x_{i+1}) \setminus N[x_i]$, we have that $\sum_{v\in N(x_i)\setminus N[x_{i+1}]}d(v, \phi(v)) \ge (d-k-\ell) + 1 + (k+\ell) = d+1$.
\end{proof}

Lemma~\ref{lem:dregedge} and Theorem~\ref{thm:liluthm} immediately give the following corollary for positively curved regular graphs. 

\begin{corollary}\label{cor:dregedge}
Let $1\le i\le L-2$. Suppose that the vertex $x_i$ is of type $(-^{a_1}, 0^{b_1}, +^{c_1})$ and that the vertex $x_{i+1}$ is of type $(-^{a_2}, 0^{b_2}, +^{c_2})$. Then, if $G$ is positively curved, we have that $c_1 - a_1 > c_2 - a_2$. 
\end{corollary}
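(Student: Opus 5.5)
The corollary follows by contraposition from Lemma~\ref{lem:dregedge} and Theorem~\ref{thm:liluthm}. Suppose, for contradiction, that $G$ is positively curved but $c_1 - a_1 \le c_2 - a_2$ for some $1\le i\le L-2$. The pair $x_i x_{i+1}$ is an edge of $G$, so Theorem~\ref{thm:liluthm} applies to it.

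Since $\kappa(x_i,x_{i+1})>0$, the remark after Theorem~\ref{thm:liluthm} gives a cheap bijection. There is an integer-valued coupling of cost at most $d$ between $\mu_{x_i}$ and $\mu_{x_{i+1}}$. The two mass distributions are $0/1$ indicators of $N(x_i)\setminus N[x_{i+1}]$ and $N(x_{i+1})\setminus N[x_i]$, and these sets have equal size by $d$-regularity. Hence an integer-valued coupling is exactly a bijection $\phi: N(x_i)\setminus N[x_{i+1}]\to N(x_{i+1})\setminus N[x_i]$ with
\[\sum_{v\in N(x_i)\setminus N[x_{i+1}]} d(v,\phi(v))\le d.\]

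Under the assumed inequality $c_1-a_1\le c_2-a_2$, Lemma~\ref{lem:dregedge} says every such bijection has total cost at least $d+1$. This contradicts the previous paragraph. Equivalently, the minimal cost is at least $d+1$, so $\kappa(x_i,x_{i+1})\le 1+\frac1d-\frac{d+1}{d}=0$. Therefore $c_1-a_1>c_2-a_2$.

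The only point needing care is the identification of integer-valued couplings with bijections. It holds because the marginals are $0/1$ indicators of equal-size sets, so an integral coupling is a permutation matrix. All the substantive work is already contained in Lemma~\ref{lem:dregedge}.
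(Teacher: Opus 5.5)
Your proposal is correct and is the same argument the paper uses: by Lemma~\ref{lem:dregedge}, the assumption $c_1-a_1\le c_2-a_2$ forces every bijection to cost at least $d+1$, and then Theorem~\ref{thm:liluthm} gives $\kappa(x_i,x_{i+1})\le 0$. The paper leaves implicit why integer-valued couplings of these equal-size $0/1$ marginals are exactly bijections; your justification of that step is correct.
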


We now prove Theorem~\ref{thm:improvdiambd}. 

\begin{proof}[Proof of Theorem~\ref{thm:improvdiambd}]
Corollary~\ref{cor:dregedge} implies that along a diameter path $x_0x_1\ldots x_L$, the sequence $(c_i-a_i)_{i=1}^{L-1}$ must be strictly decreasing. Since $a_i+b_i+c_i = d-2$ and $a_i, b_i, c_i$ are nonnegative integers, it follows that $-(d-2)\le c_i - a_i \le d-2$. Therefore, there can be at most $2d-3$ internal vertices along the diameter path, implying $L\le 2d-2$. 
\end{proof}

\section{Positively curved regular graphs are $3$-connected}

In this section, we prove Theorem~\ref{thm:3conn}, which states that every positively curved $d$-regular graph is $3$-connected when $d\ge 3$. 

We first prove that every $d$-regular positively curved graph is $2$-connected.

\begin{lemma}\label{lem:2conn}
For each integer $d\ge 2$, every $d$-regular positively curved graph is $2$-connected.
\end{lemma}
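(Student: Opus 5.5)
We argue by contradiction: suppose $G$ is a $d$-regular positively curved graph with a cut vertex $v$. The case $d=2$ is immediate, since a connected $2$-regular graph is a cycle and has no cut vertex. So we may assume $d\ge 3$. Let the components of $G-v$ be $H_1,\dots,H_m$ with $m\ge 2$. Each $H_j$ contains at least one neighbor of $v$.

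The key step is to look at an edge $vu$ with $u\in H_1$. Set $A = N(v)\setminus N[u]$ and $B = N(u)\setminus N[v]$. Every common neighbor of $u$ and $v$ lies in $H_1$, and so does all of $B$. On the other hand, $A$ contains every neighbor of $v$ outside $H_1$, and there is at least one such neighbor. Any path from a vertex $w\in A\setminus V(H_1)$ to a vertex $b\in B$ must pass through $v$. Since $b\notin N[v]$, we get $d(w,b)=d(w,v)+d(v,b)\ge 1+2=3$.

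Let $t = |N(v)\setminus V(H_1)|\ge 1$. Any bijection $\phi:A\to B$ pays at least $3$ for each of these $t$ vertices and at least $1$ for each other vertex of $A$. Its total cost is therefore at least $|A|+2t$. By the characterization following Theorem~\ref{thm:liluthm}, positive curvature requires this cost to be at most $d$. So we need $|A|+2t\le d$, where $|A| = d-1-|N(u)\cap N(v)|$.

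This bound alone is too weak when $u$ and $v$ share many neighbors, so the main step is choosing the component and the vertex $u$ well. Pick $H_1$ to be a component containing the fewest neighbors of $v$; call this number $s$, so $s\le d/2$ and $t=d-s\ge d/2$. Take any $u\in N(v)\cap V(H_1)$. Then $|N(u)\cap N(v)|\le s-1$, which gives $|A|\ge d-s$. The cost is then at least $(d-s)+2(d-s)=3(d-s)\ge 3d/2>d$, a contradiction.

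One degenerate case needs care: $A$ could be empty, but this cannot happen, because $A$ contains the $t\ge 1$ neighbors of $v$ outside $H_1$. We also need $|A|=|B|$ for a bijection to exist, and this holds by regularity.

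The main point to verify is the distance claim $d(w,b)\ge 3$, which relies on the fact that vertices of $B$ are non-neighbors of $v$ lying in $H_1$. The choice of the smallest component is what makes the counting close.
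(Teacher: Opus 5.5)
Your proof is correct and follows essentially the same route as the paper. Both arguments pick a component of $G-v$ containing at most $\lfloor d/2\rfloor$ neighbors of the cut vertex and take an edge $vu$ into it. They then observe that each of the at least $\lceil d/2\rceil$ neighbors of $v$ outside that component must be matched at distance at least $3$, which forces the cost to be at least $3\lceil d/2\rceil>d$. The extra $|A|$ term and the separate treatment of $d=2$ are harmless but unnecessary, since the bound $3t>d$ already suffices for every $d\ge 2$.
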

\begin{proof}
Let $d\geq 2$ be an integer and $G$ be a $d$-regular positively curved graph. Suppose $G$ is connected, but not $2$-connected. Then $G$ has a cut vertex $u$. Let $C$ denote a component of $G-u$ such that $u$ has at most $\lfloor \frac{d}{2}\rfloor$ neighbors in $C$, and let $x$ denote a neighbor of $u$ in $C$. For any bijection $\phi:N(u)\setminus N[x] \rightarrow N(x) \setminus N[u]$, we claim that $\sum_{v\in N(u)\setminus N[x]}d(v, \phi(v)) \ge 3\left\lceil \frac{d}{2}\right\rceil$.

Observe that $N[x]\setminus \{u\}\subseteq V(C)$, and for any $v\in N(u)\setminus V(C)\subseteq N(u)\setminus N[x]$ we have $d(v, \phi(v))\geq 3$ as $\phi(v)\in N(x)\setminus N[u]\subseteq V(C)\setminus N[u]$. Since $u$ has at most $\lfloor \frac{d}{2}\rfloor$ neighbors in $C$, 
it follows that $|N(u)\setminus V(C)|\geq d-\lfloor \frac{d}{2}\rfloor=\lceil \frac{d}{2}\rceil$. Hence, $$\sum_{v\in N(u)\setminus N[x]}d(v, \phi(v)) \geq \sum_{v\in N(u)\setminus V(C)}d(v, \phi(v))\geq 3\left\lceil \frac{d}{2}\right\rceil.$$
As $d\ge 2$, we have that $3\lceil \frac{d}{2}\rceil\geq d+1$, and so $\sum_{v\in N(u)\setminus N[x]}d(v, \phi(v)) \geq d+1$, contradicting that the edge $ux$ has positive curvature. Therefore, $G$ is $2$-connected.
\end{proof}

We now prove that every positively curved $d$-regular graph is $3$-connected when $d\ge 3$.

\begin{proof}[Proof of Theorem~\ref{thm:3conn}]
Let $d$ be an integer such that $d\geq 3$, and let $G$ be a $d$-regular positively curved graph. Then it follows from Lemma~\ref{lem:2conn} that $G$ is $2$-connected. Suppose $G$ is not  $3$-connected. Then $G$ has a $2$-cut $S=\{u_1, u_2\}$. Let $A$ be the event defined by $A:=\{u_1u_2\in E(G)\}$, and let $\ell:=d -\mathbbm{1}_{A}$. We claim that for each $u\in S$ and for any component $C$ of $G-S$, we have $\lfloor\frac{\ell}{2} \rfloor\leq |N_C(u)|\leq \lceil \frac{\ell}{2}\rceil$. Note that $|E(\{u\}, G-S)|=\ell$ for each $u\in S$.
It suffices to show that if $|N_C(u)|\leq \lfloor\frac{\ell}{2} \rfloor$ for some component $C$ of $G-S$ then $u$ has exactly $\lfloor\frac{\ell}{2} \rfloor$ neighbors in $C$. 

Let $C_1$ denote a component of $G-S$, and let $x_1$ be a neighbor of $u_1$ in $C_1$. 
Assume that $u_1$ has at most $\lfloor\frac{\ell}{2} \rfloor$ neighbors in $C_1$. We need to show that $|N_{C_1}(u_1)|=\lfloor\frac{\ell}{2} \rfloor$. Assume that $|N_{C_1}(u_1)|\leq \lfloor\frac{\ell}{2} \rfloor-1$. It follows that $|N_{G-S-C_1}(u_1)|\geq \lceil\frac{\ell}{2} \rceil+1$. By positive curvature of the edge $u_1x_1$, choose a bijection $\phi: N(u_1)\setminus N[x_1]\rightarrow N(x_1) \setminus N[u_1]$ satisfying $\sum_{v\in N(u_1)\setminus N[x_1]} d(v,\phi(v)) \leq d$. Observe that $N(x_1)\subseteq V(C_1)\cup S$. For any $v\in N_{G-S-C_1}(u_1)\subseteq N(u_1)\setminus N[x_1]$, if $d(v, \phi(v))=1$ then we have $\phi(v)=u_2\in N(x_1)\setminus N[u_1]$  and so $u_1u_2\notin E(G).$ Thus, 
\begin{align*}
    \sum_{v\in N_{G-S-C_1}(u_1)} d(v, \phi(v)) &\geq 2| N_{G-S-C_1}(u_1)|-\mathbbm{1}_{u_2\in N(x_1)\setminus N[u_1]}\\
    &\geq 2| N_{G-S-C_1}(u_1)|+\mathbbm{1}_{u_1u_2\in E(G)}-1\\&
    \geq 2\left(\left\lceil\frac{\ell}{2} \right\rceil+1\right)+\mathbbm{1}_A-1\\&
    \geq \ell+\mathbbm{1}_A+1=d+1,
    \end{align*}
giving a contradiction. Therefore, for every $u\in S$, we have that $\lfloor\frac{\ell}{2} \rfloor\leq |N_C(u)|\leq \lceil \frac{\ell}{2}\rceil$ for every component $C$ of $G-S$. 

Let $C_1, \dots, C_k$ be the components of $G-S$. For each $u \in S$,  $\ell = \sum_{i = 1}^{k} |N_{C_i}(u)| \geq k \lfloor\frac{\ell}{2} \rfloor$. Since $k\geq 2$, this implies $G-S$ has exactly two components unless $\ell = 3$. Suppose $\ell = 3$ and $G-S$ has three components $C_1, C_2$ and $C_3$. It follows that $|N_{C_i}(u_1)| = |N_{C_i}(u_2)|= 1$ for each $i \in [3]$. Furthermore, $N(u_1) \cap N(u_2) = \emptyset$ since $G$ is $2$-connected. Assume $x_1 \in N_{C_1}(u_1)$. By positive curvature of the edge $u_1x_1$, choose a bijection $\phi: N(u_1)\setminus N[x_1] \rightarrow N(x_1)\setminus N[u_1]$ satisfying $\sum_{v\in N(u_1)\setminus N[x_1]} d(v,\phi(v)) \leq d$.  We know that $d(v, \phi(v)) \geq 3$ for each $v \in N_{C_2}(u_1) \cup N_{C_3}(u_1)$ since $N(u_1) \cap N(u_2) = \emptyset$. Hence
\[
d \geq \sum_{v\in N(u_1)\setminus N[x_1]} d(v, \phi(v)) \geq \sum_{v \in N_{C_2}(u_1) \cup N_{C_3}(u_1)} d(v, \phi(v)) \geq 6,
\]
a contradiction of $d\leq \ell + 1 = 4$. Therefore $G-S$ has exactly two components where each $u\in S$ has $\lfloor\frac{\ell}{2} \rfloor$ neighbors in one component and $\lceil \frac{\ell}{2}\rceil$ neighbors in the other component.

Without loss of generality, we may assume $u_1$ has exactly $\lfloor\frac{\ell}{2} \rfloor$ neighbors in $C_1$. We first show that $u_1u_2\notin E(G)$. Suppose $u_1$ is adjacent to $u_2$ in $G$. Then $\ell=d-\mathbbm{1}_A=d-\mathbbm{1}_{u_1u_2\in E(G)}=d-1$. We claim that $N_{C_1}(u_1)\subseteq N_{C_1}(u_2)$, i.e., $N_{C_1}(S)=N_{C_1}(u_2)$. Assume that $N_{C_1}(u_1)\nsubseteq N_{C_1}(u_2)$. 
Then we may assume that $x_1\in N_{C_1}(u_1)\backslash N_{C_1}(u_2)$.  Note that $u_2$ and all the vertices in $N_{C_2}(u_1)$ are contained in $N(u_1)\setminus N[x_1]$. Let $N_{C_2}(u_1)=\{v_1, v_2, \ldots, v_{\lceil \frac{\ell}{2}\rceil}\}$.
By positive curvature of the edge $u_1x_1$, choose a bijection $\phi: N(u_1)\setminus N[x_1] \rightarrow N(x_1)\setminus N[u_1]$ satisfying $\sum_{v\in N(u_1)\setminus N[x_1]} d(v,\phi(v)) \leq d$. Observe 
\[d\geq \sum_{v\in N(u_1)\setminus N[x_1]} d(v, \phi(v))\geq d(u_2, \phi(u_2))+\sum_{i=1}^{\lceil \frac{\ell}{2}\rceil}d(v_i, \phi(v_i)).\]
Since $d(v_i, \phi(v_i))\geq 2$ for any $i\in [\lceil \frac{\ell}{2}\rceil]$, the above inequality forces $d(u_2, \phi(u_2))=1$ and $d(v_i, \phi(v_i))=2$. This implies that $\phi(u_2), v_1, \ldots, v_{\lceil \frac{\ell}{2}\rceil}, \phi(v_1), \ldots, \phi(v_{\lceil \frac{\ell}{2}\rceil})$ are neighbors of $u_2$ and so $u_2$ has degree at least $1+2\lceil \frac{\ell}{2}\rceil+1>d$ as $u_1u_2\in E(G)$, giving a contradiction. Hence, $x_1$ is also adjacent to $u_2$ for any $x_1\in N_{C_1}(u_1)$, implying that $N_{C_1}(u_1)\subseteq N_{C_1}(u_2)$ and $N_{C_1}(S)=N_{C_1}(u_2)$ has at most $\lceil \frac{\ell}{2}\rceil=\lceil \frac{d-1}{2}\rceil$ vertices. Observe that when $d=3$, $N_{C_1}(S)$ contains exactly one vertex and it is a cut vertex of $G$, contradicting that $G$ is $2$-connected. We may now assume that $d\ge 4$, and then $\ell=d-1\geq 3$. Note that $N(x_1)\setminus N[u_1]= (N_{C_1}(x_1)\setminus N_{C_1}(S))\cup (N_{C_1}(x_1)\cap N_{C_1}(u_2)\setminus N_{C_1}(u_1))$. Observe that $|N_{C_1}(u_2)\setminus N_{C_1}(u_1)|\leq \lceil \frac{\ell}{2}\rceil-\lfloor\frac{\ell}{2} \rfloor=\mathbbm{1}_{\ell \text{ is odd}}$. For any $v_i\in N_{C_2}(u_1)$, we know that $d(v_i, \phi(v_i))\geq 3$ if $\phi(v_i)\in N_{C_1}(x_1)\setminus N_{C_1}(S)$, and $d(v_i, \phi(v_i))\geq 2$ if $\phi(v_i)\in N_{C_1}(x_1)\cap N_{C_1}(u_2)\setminus N_{C_1}(u_1)$. It follows that 
\[ \sum_{v\in N(u_1)\setminus N[x_1]} d(v, \phi(v)) \geq \sum_{i=1}^{\lceil \frac{\ell}{2}\rceil} d(v_i, \phi(v_i))\geq 3 \left\lceil \frac{\ell}{2}\right\rceil - \mathbbm{1}_{\ell \text{ is odd}}=\ell+\left\lceil \frac{\ell}{2}\right\rceil\geq \ell+2=d+1,
\]
giving a contradiction. Therefore, $u_1u_2\notin E(G)$ and $\ell=d$.

We consider first the case that $d$ is odd. It follows that $|N_{C_1}(u_1)|=\lfloor \frac{d}{2} \rfloor=\frac{d-1}{2}$ and $|N_{C_2}(u_1)|=\lceil \frac{d}{2} \rceil=\frac{d+1}{2}$. We show that $N_{C_1}(u_1)\subseteq N_{C_1}(u_2)$. Suppose there exists $x_1\in N_{C_1}(u_1)\setminus N_{C_1}(u_2)$. By positive curvature of the edge $u_1x_1$, choose a bijection $\phi: N(u_1)\setminus N[x_1] \rightarrow N(x_1)\setminus N[u_1]$ satisfying $\sum_{v\in N(u_1)\setminus N[x_1]} d(v,\phi(v)) \leq d$. For every $v \in N_{C_2}(u_1)$, we have $d(v, \phi(v))\geq 2$. Hence,
$ \sum_{v\in N(u_1)\setminus N[x_1]} d(v, \phi(v))\geq \sum_{v\in N_{C_2}(u_1)} d(v, \phi(v))\geq 2|N_{C_2}(u_1)|=d+1,$ giving a contradiction. Therefore, $u_2\in N(x_1)\setminus N[u_1]$, and since $\sum_{v\in N(u_1)\setminus N[x_1]} d(v, \phi(v)) \leq d$, the vertex $v_1=\phi^{-1}(u_2)\in N_{C_2}(u_1)$ and $v_1u_2\in E(G)$. For any $v\in N_{C_2}(u_1)\setminus \{v_1\}$, we have $d(v, \phi(v))\geq 2$. We claim that $N_{C_2}(u_1)\subseteq N_{C_2}(u_2)$. Suppose there exists $v\in N_{C_2}(u_1)$ such that $v$ is not adjacent to $u_2$. Then $v\neq v_1$ and $d(v, \phi(v))\geq 3$. This implies that $ \sum_{v\in N(u_1)\setminus N[x_1]} d(v, \phi(v))\geq d+1$. Therefore, $N_{C_2}(u_1)\subseteq N_{C_2}(u_2)$. Since $N_{C_1}(u_1)\subseteq N_{C_1}(u_2)$ and $G$ is $d$-regular, we know that $N_{C_1}(u_1)=N_{C_1}(u_2)=N_{C_1}(S)$. It follows that for any $x_1\in N_{C_1}(u_1)$, we have $N(x_1)\setminus N[u_1]\subset \{u_2\} \cup (N_{C_1}(x_1)\setminus N_{C_1}(S)).$ This implies that for $v\in N_{C_2}(u_1)$, we have $d(v, \phi(v))\geq 3$  if $\phi(v)\neq u_2$, and so
\[\sum_{v\in N(u_1)\setminus N[x_1]} d(v, \phi(v))\geq \sum_{v\in N_{C_2}(u_1)} d(v, \phi(v))\geq 3|N_{C_2}(u_1)|-2=3\frac{d+1}{2}-2=\frac{3d-1}{2}\geq d+1\]
for $d\geq 3$. This gives a contradiction.

We may now assume that $d$ is even. Then $d\geq 4$, $|N_{C_1}(u_1)|=|N_{C_1}(u_2)|=\frac{d}{2}$. We claim that $N_{C_1}(u_1)\neq N_{C_1}(u_2)$. Suppose $N_{C_1}(u_1)=N_{C_1}(u_2)$. When $d=4$, let $N_{C_1}(u_1)=N_{C_1}(u_2)=\{x_1,x_2\}=N_{C_1}(S)$, and let $N_{C_2}(u_1)=\{v_1, v_2\}$. Observe that $\{x_1, x_2\}$ is a $2$-cut of $G$ since $G$ is $4$-regular. It follows that $x_1x_2\notin E(G)$, and so $N(u_1)\setminus N[x_1]=\{v_1, v_2, x_2\}$. By positive curvature of the edge $u_1x_1$, choose a bijection $\phi: N(u_1)\setminus N[x_1] \rightarrow N(x_1)\setminus N[u_1]$ satisfying $\sum_{v\in N(u_1)\setminus N[x_1]} d(v,\phi(v)) \leq d$. We know that one of $\phi(v_1)$ and $\phi(v_2)$ is contained in $N_{C_1}(x_1)\setminus N_{C_1}(S)$, and we may assume that $\phi(v_1)\in N_{C_1}(x_1)\setminus N_{C_1}(S)$. Then $d(v_1, \phi(v_1))\geq 3$ and $\sum_{v\in N(u_1)\setminus N[x_1]} d(v, \phi(v))=d(v_1, \phi(v_1))+d(v_2, \phi(v_2))+d(x_2, \phi(x_2))\geq 3+1+1\geq d+1.$ Hence, we may assume $d\geq 6$. Note that $N(x_1)\setminus N[u_1]\subseteq \{u_2\} \cup (N_{C_1}(x_1)\setminus N_{C_1}(S))$ for every $x_1\in N_{C_1}(u_1)=N_{C_1}(u_2)$, and so for $v\in N_{C_2}(u_1)$, we have $d(v, \phi(v))\geq 3$  if $\phi(v)\neq u_2$. Therefore,
\[\sum_{v\in N(u_1)\setminus N[x_1]} d(v, \phi(v))\geq \sum_{v\in N_{C_2}(u_1)} d(v, \phi(v))\geq 3|N_{C_2}(u_1)|-2=3\frac{d}{2}-2=\frac{3d-4}{2}\geq d+1\]
for $d\geq 6$, a contradiction. Hence, $N_{C_1}(u_1)\neq N_{C_1}(u_2)$. Similarly, $N_{C_2}(u_1)\neq N_{C_2}(u_2)$. This implies that there exist $x_1\in N_{C_1}(u_1)$ and $v_1\in N_{C_2}(u_1)$ such that both $x_1, v_1$ are not adjacent to $u_2$. Observe that $u_2\notin N(x_1)\setminus N[u_1]$, and so we know $d(v, \phi(v))\geq 2$ for each $v\in N_{C_2}(u_1)$, moreover, if $vu_2\notin E(G)$ then $d(v, \phi(v))\geq 3$. Hence, $d(v_1, \phi(v_1))\geq 3$ and $\sum_{v\in N(u_1)\setminus N[x_1]} d(v, \phi(v))\geq \sum_{v\in N_{C_2}(u_1)} d(v, \phi(v))\geq 2|N_{C_2}(u_1)|+1=d+1,$ a contradiction.

Therefore, $G$ is $3$-connected if $G$ is $d$-regular and positively curved for some integer $d\geq 3$.
\end{proof}

\section{The positively curved $4$-regular planar graphs}

Let $G$ be a positively curved $4$-regular planar graph. By Theorem~\ref{thm:3conn}, $G$ is $3$-connected. We first introduce some additional notation for $3$-connected plane graphs. Let $G$ be a $3$-connected plane graph, and let $S$ be a minimal vertex cut of $G$. We know that $S$ contains at least three vertices and $S$ separates $G$ into two parts. We use $\textrm{Int}_{G}(S)$ and $\Ext_G(S)$ to denote the interior and the exterior of $S$, respectively. We omit $G$ if it is clear from the context.

We first show that the only positively curved $4$-regular planar graph with connectivity exactly three is the rectified triangular prism (see Figure~\ref{fig:rectifiedtriprism}). 

 \begin{lemma}\label{lem:4_reg_3_conn}
 Let $G$ be a $4$-regular positively curved planar graph such that $G$ has connectivity three. Then $G$ is the rectified triangular prism.
 \end{lemma}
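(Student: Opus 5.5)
We fix a $3$-cut $S=\{u_1,u_2,u_3\}$ and count edges from each $u_j$ into the two sides, much as in the proof of Theorem~\ref{thm:3conn}. Since $G$ is $3$-connected and planar, $S$ is a minimal cut separating $G$ into $\Int(S)$ and $\Ext(S)$. Each $u_j$ has a neighbor on both sides, and at most $4$ neighbors in total.

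The first step is to bound how unevenly the neighbors of a cut vertex can split. Suppose $u_1$ has exactly one neighbor $x$ in $\Int(S)$. Then $N(u_1)\setminus N[x]$ contains all neighbors of $u_1$ in $\Ext(S)$, except possibly some in $S$. Every $v$ in $\Ext(S)$ has $d(v,\phi(v))\ge 2$ for $\phi(v)\in N(x)\setminus N[u_1]$. Moreover $d(v,\phi(v))=2$ forces the middle vertex to lie in $S$. With the bijection criterion following Theorem~\ref{thm:liluthm} (total cost $\le 4$), this should show that $u_1$ has exactly two neighbors on each side and no neighbors inside $S$. Alternatively, it pins down a very rigid configuration in which one side is tiny.

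The second step is to choose $S$ so that $\Int(S)$ is minimal. I expect this to force $S$ to be independent, with $\Int(S)$ a triangle $x_1x_2x_3$ where each $x_i$ is adjacent to two vertices of $S$. In other words, $S$ is the set of vertices at distance one from a triangular face, arranged as in the rectified prism, with $u_j$ adjacent to $x_j$ and $x_{j+1}$. The small options for $|\Int(S)|$ are handled by $4$-regularity, planarity, and Lemma~\ref{lem:dregplanc3c4}, which says every edge lies in a $C_3$ or $C_4$. A minimal interior with two interior neighbors per cut vertex and all interior degrees $4$ should have to be a triangle, since a larger interior would contain a smaller $3$-cut.

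The third step applies the same reasoning to the exterior side. Either $\Ext(S)$ is also a triangle, or there is another $3$-cut nested inside $\Ext(S)$. The curvature of edges $u_jx_j$, computed through explicit couplings, then determines how the exterior neighbors of the $u_j$ connect, and planarity fixes their cyclic order. Combined with the diameter bound of Theorem~\ref{thm:improvdiambd}, which gives diameter at most $6$, the expected outcome is that the exterior is a triangle $y_1y_2y_3$ attached symmetrically. That yields the $9$-vertex rectified triangular prism, and a direct check confirms it is positively curved. Other closures either give a graph that is $4$-connected (a contradiction) or create an edge whose neighbors cannot be matched with total cost at most $4$.

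\textbf{Main obstacle.} The hardest part is the first two steps: ruling out the unbalanced splits and showing the minimal interior is exactly a triangle. Edges inside $S$ and common neighbors make the distance bookkeeping case-heavy. For this I would first try an argument modeled on the $2$-cut proof, choosing $x$ to be a neighbor of $u_1$ that is not adjacent to the other cut vertices, and using planarity to guarantee that such an $x$ exists.
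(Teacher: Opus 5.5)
There is a genuine gap. Your proposal is an outline whose decisive steps are only asserted (``should show'', ``I expect'', ``expected outcome''), and the one concrete mechanism you give for the central step is false. In Step~2 you claim that a minimum component $C$ of $G-S$, in which every $u_j$ has two neighbours, must be a triangle ``since a larger interior would contain a smaller $3$-cut''. Minimality only yields three things: each $u_j$ has at least two neighbours in $C$, no vertex of $C$ is adjacent to all of $S$, and no two vertices of $S$ have two common neighbours in $C$. Nothing here forces $|N_C(S)|=3$. For instance, take a hexagon $x_1\cdots x_6$ with a triangle glued inside and $u_j$ joined to $x_{2j-1},x_{2j}$; this meets all three conditions and is $4$-regular and planar.

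What is missing is the curvature argument the paper actually carries out. Let $N_C(u_1)=\{x_1,x_2\}$ and $N_{G-S-C}(u_1)=\{v_1,v_2\}$, and suppose $x_1x_2\notin E(G)$. A bijection $\{v_1,v_2,x_2\}\to N(x_1)\setminus N[u_1]$ of cost at most $4$ forces each of $x_1,x_2,v_1,v_2$ to have a neighbour among the other cut vertices, and then exhibits the $2$-cut $\{x_1,x_2\}$. A further analysis of the edges $u_1v_1$, $u_1x_1$ and $y_1u_2$ shows that each $x_i$ is adjacent to a second vertex of $S$. Only then do $|N_C(S)|=3$ and $C\cong K_3$ follow. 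This argument is symmetric in the two components and does not use minimality. So once every cut vertex splits $2+2$, it also makes the other side a triangle, and your Step~3 (nested cuts, the diameter bound $6$) is unnecessary and, as written, proves nothing.

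Step~1 also contains an error. It is not true that every $v\in\Ext(S)$ satisfies $d(v,\phi(v))\ge 2$. The vertex $\phi(v)$ may be a cut vertex $u_2\in N(x)$ with $v\sim u_2$, at cost $1$. So if the other three neighbours of $u_1$ lie in $\Ext(S)$, a total cost of $1+1+2=4$ is not excluded by your count, which is why your step ends in a hedge.

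The paper first fixes $S$ so that $C$ is a minimum component, which gives at least two neighbours in $C$ for free. It then handles a cut vertex $u_1$ with a unique neighbour $v_1$ in $G-S-C$ in two cases:
\begin{itemize}
\item If $u_1$ has no neighbour in $S$, it takes $x_1\in N_C(u_1)$ adjacent to neither $u_2$ nor $u_3$, and $y_1\in N(v_1)\setminus S$. Then $d(x_1,y_1)\ge 3$, and every bijection costs at least $5$.
\item If $u_1u_2\in E(G)$, then $v_1\not\sim u_2$, since otherwise $\{v_1,u_3\}$ is a $2$-cut. Choosing $x_1\in N_C(u_1)$ with $x_1u_3\notin E(G)$, the cost bound forces $x_1u_3\in E(G)$, a contradiction.
\end{itemize}
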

 \begin{figure}[h]
\begin{center}
    \resizetikz{0.6}{\begin{tikzpicture}[scale = 1]
\foreach \k in {3,4,5}{
\pgfmathtruncatemacro{\i}{\k - 3}
\node (\k) at (90 + 120*\i:0.5) {};
}
\foreach \k in {6,7,8}{
\pgfmathtruncatemacro{\i}{\k - 6}
\node (\k) at (30 + 120*\i:0.65) {};
}
\foreach \k in {9,10,11}{
\pgfmathtruncatemacro{\i}{\k - 9}
\node (\k) at (90 + 120*\i:2) {};
}
\draw (5) -- (3) -- (4) -- (5);
\draw (5) -- (6) -- (3) -- (7) -- (4) -- (8) -- (5);
\draw (9) -- (10) -- (11) -- (9);
\draw (9) -- (7) -- (10) -- (8) -- (11) -- (6) -- (9);
\end{tikzpicture}}
\end{center}
 \caption{The rectified triangular prism}
 \label{fig:rectifiedtriprism}
 \end{figure}

\begin{proof}
    Let $G$ be a $4$-regular positively curved plane graph such that its connectivity is three. Let $S=\{u_1, u_2, u_3\}$ be a $3$-cut of $G$. Since $G$ is planar and $3$-connected, we know that $S$ separates $G-S$ into two components say $C_1$ and $C_2$. Suppose every vertex of $S$ has exactly two neighbors in each of $C_1$ and $C_2$. We show that $G$ is the rectified triangular prism, \textit{i.e.}, each $C_i$ is a triangle. Since $G$ is planar and $3$-connected, we have the following observation.
    \begin{observation}\label{obs:4-reg}
    No vertex in a component of $G-S$ is adjacent to all three vertices of $S$, and no pair of vertices of $S$ has two common neighbors in a component of $G-S$.
    \end{observation}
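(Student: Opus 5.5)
The statement to prove is Observation~\ref{obs:4-reg}. The plan is to use only the facts that $G$ is $3$-connected (Theorem~\ref{thm:3conn}), $4$-regular and planar, together with the standing assumption that $S=\{u_1,u_2,u_3\}$ is a $3$-cut for which every $u_i$ has exactly two neighbours in each of $C_1$ and $C_2$. No curvature is needed. Each half of the observation is a short separation argument: if the configuration occurs, either a smaller vertex cut appears or a $K_{3,3}$-minor is forced.

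\emph{First claim.} Suppose $x\in V(C_1)$ is adjacent to $u_1,u_2,u_3$. Since $x$ has degree $4$, it has exactly one further neighbour $y$.
\begin{itemize}
\item If $C_1=\{x\}$, then $x$ has degree $3$, which is impossible.
\item Otherwise $y\in V(C_1)$, because $N(x)\subseteq V(C_1)\cup S$. In this case $\{y\}\cup S$ separates $x$ from the rest of $C_1$.
\end{itemize}
This alone does not contradict $3$-connectivity, so I would argue via planarity instead. Contract $C_2$ to a single vertex $w$; this is possible since $C_2$ is connected and every $u_i$ has neighbours in $C_2$. Then $x$ and $w$ are both adjacent to all of $u_1,u_2,u_3$. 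A third vertex adjacent to all of $S$ would give a $K_{3,3}$-minor.

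To get that third vertex, consider $C_1-x$. It is nonempty, since $y\in C_1-x$. Each $u_i$ has a second neighbour in $C_1$ besides $x$, so each $u_i$ has a neighbour in $C_1-x$. If $C_1-x$ is connected, contracting it gives the third vertex, hence a $K_{3,3}$-minor, contradicting planarity.

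If $C_1-x$ is disconnected, then $x$ together with at most two vertices of $S$ should separate $G$. More precisely, take a component $D$ of $C_1-x$ not containing $y$. Every neighbour of $D$ lies in $S\cup\{x\}$. If $D$ sees all three $u_i$, contract $D$ to obtain the $K_{3,3}$-minor as before. Otherwise $N(D)\subseteq\{x\}\cup\{\text{at most two } u_i\}$, which is a cut of size at most $3$. In that case I would iterate: either pick $D$ to minimise the situation, or note that $D$ and the component containing $y$ together must cover the second neighbours of the $u_i$. The cleanest route is probably to argue directly that some component of $C_1-x$ is adjacent to all three $u_i$, or else to exhibit a $2$-cut.

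\emph{Second claim.} Suppose $u_1,u_2$ have two common neighbours $a,b\in V(C_1)$. By the degree assumption these are exactly $N_{C_1}(u_1)=N_{C_1}(u_2)=\{a,b\}$. So $u_1$ and $u_2$ attach to $C_1$ only through $a,b$, and $u_3$ attaches through its own two neighbours. Then $\{a,b,u_3\}$ separates $\{u_1,u_2\}\cup C_2$ from $C_1\setminus\{a,b\}$. This set is nonempty: otherwise $C_1=\{a,b\}$, and $a$ would have degree at most $4$ with neighbours among $u_1,u_2,b,u_3$, which then forces $a$ to be adjacent to all of $S$, contradicting the first claim.

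That alone is not a contradiction either, so again I would use planarity. Contract $C_2$ to $w$, adjacent to $u_1,u_2,u_3$. Connect $u_3$ into $C_1$, and use $a$, $b$ and $w$ as the three vertices on one side against $u_1,u_2,$ and a contracted piece containing $u_3$ together with the path from $u_3$ reaching $a$ and $b$ inside $C_1$.

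\emph{Main obstacle.} The delicate step is the case analysis ensuring the contracted pieces are disjoint and adjacent as required for the $K_{3,3}$-minor. I would handle it by choosing the contracted branch sets carefully, using $3$-connectivity (no $2$-cut inside $C_1\cup S$) to guarantee the needed disjoint paths via Menger's theorem.
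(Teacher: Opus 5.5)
Your strategy is the right one: derive both parts from planarity via a $K_{3,3}$-minor, using $3$-connectivity and the standing assumption that each $u_i$ has exactly two neighbours in each of $C_1,C_2$. The paper gives no argument beyond ``since $G$ is planar and $3$-connected,'' so this is the natural way to make it precise. As written, though, your proposal stops short at exactly the two steps that carry the content.

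\textbf{First claim.} You leave the case ``$C_1-x$ disconnected'' open, and your fallback there (a cut ``of size at most $3$'') would give no contradiction in a $3$-connected graph. This case cannot occur. You already showed that $y$ is the only neighbour of $x$ in $C_1$, so $x$ is a leaf of the connected graph $G[C_1]$, and hence $C_1-x$ is connected. Each $u_i$ has its second $C_1$-neighbour in $C_1-x$. So the branch sets $\{x\}$, $V(C_1)\setminus\{x\}$, $V(C_2)$ against $\{u_1\},\{u_2\},\{u_3\}$ give the $K_{3,3}$-minor at once.

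\textbf{Second claim.} The missing piece is a connected subgraph of $C_1-\{a,b\}$ that is adjacent to $a$, $b$ and $u_3$. Your ``path from $u_3$ reaching $a$ and $b$'' must avoid $a$ and $b$ themselves, or the branch sets overlap. You do not need Menger's theorem. Let $D$ be any component of $C_1-\{a,b\}$; it exists because you showed this set is nonempty.

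\begin{itemize}
\item Since $N_{C_1}(u_1)=N_{C_1}(u_2)=\{a,b\}$, we have $N(D)\subseteq\{a,b,u_3\}$.
\item $N(D)$ separates $D$ from $u_1$, so $3$-connectivity forces $N(D)=\{a,b,u_3\}$.
\item Then $\{a\},\{b\},V(C_2)$ against $\{u_1\},\{u_2\},V(D)\cup\{u_3\}$ are disjoint connected branch sets of a $K_{3,3}$-minor. Here $C_2$ reaches $V(D)\cup\{u_3\}$ through $u_3$.
\end{itemize}

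With these two short additions your proof is complete, and more explicit than the paper's one-line justification.
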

    We may assume that $N_{C_1}(u_1)=\{x_1, x_2\}$ and $N_{C_2}(u_1)=\{v_1, v_2\}$. First we show that $x_1x_2\in E(G)$. Suppose $x_1$ is not adjacent to $x_2$. It follows that $u_1$ and $x_i$ for each $i\in [2]$ have no common neighbor. We consider the edge $u_1x_1$. Note that $N(u_1)\setminus N[x_1]=\{v_1, v_2, x_2\}$. Since $u_1x_1$ has positive curvature, there exists a bijection $\phi: N(u_1)\setminus N[x_1] \rightarrow N(x_1)\setminus N[u_1]$ such that $d(v_1, \phi(v_1))+d(v_2, \phi(v_2))+d(x_2, \phi(x_2))\leq 4$. This implies that $d(v, \phi(v))\leq 2$ for each $v\in \{v_1, v_2, x_2\}$,  and there is at most one such $v$ with $d(v, \phi(v))=2$. We claim that $x_1$ has a neighbor in $\{u_2, u_3\}$, otherwise $d(\{v_1, v_2\}, N(x_1)\setminus N[u_1])\geq 2$ gives a contradiction. By symmetry, $x_2$ also has a neighbor in $\{u_2, u_3\}$. By Observation~\ref{obs:4-reg}, we may assume that $x_1u_2, x_2u_3\in E(G)$, and so $x_1u_3, x_2u_2\notin E(G)$.
    We now show $v_1$ has a neighbor in $\{u_2, u_3\}$. Suppose not. Since $d(v_1, u_2)\geq 2$ and $d(v_1, N(x_1)\setminus \{u_1, u_2\})\geq 3$, we have $\phi(v_1)=u_2$ and $d(v_1, \phi(v_1))=2$. Then the distance between the vertex $v_2$ and $N(x_1)\setminus \{u_1, u_2\}$ is at least two, giving a contradiction. Hence, we have $N(v_1)\cap \{u_2, u_3\}\neq \emptyset$ and $N(v_2)\cap \{u_2, u_3\}\neq \emptyset$. We may assume that $v_1u_2, v_2u_3\in E(G)$ and $v_1u_3, v_2u_2\notin E(G)$. We claim that $x_1$ and $u_3$ have a common neighbor. Since $d(v_2, N(x_1)\setminus \{u_1\})\geq 2$, we have $d(v_2, \phi(v_2))=2$. This implies that $d(v_1, \phi(v_1))=d(x_2, \phi(x_2))=1$. It follows that $\phi(v_1)=u_2$, $\phi(v_2)\in V(C_1)$ is a common neighbor of $x_1$ and $u_3$, and $\phi(x_2)\in V(C_1)$ is a common neighbor of $x_1$ and $x_2$. By symmetry, we have that $x_2$ and $u_2$ have a common neighbor in $C_1$, and observe that $\phi(v_2)$ should be this common neighbor of $x_2$ and $u_2$ by the planarity of $G$. Note that $N(\phi(v_2))=\{x_1, x_2, u_2, u_3\}$. It follows that $\{x_1, x_2\}$ is a $2$-cut of $G$, a contradiction. Therefore, $x_1x_2\in E(G)$, and so every two neighbors of $u\in S$ in $C_i$ are adjacent by symmetry.

    Next we show that $x_1$ has a neighbor in $\{u_2, u_3\}$. Suppose $N(x_1)\cap \{u_2, u_3\}=\emptyset$. We claim that $x_2$ has a neighbor in $\{u_2, u_3\}$.  If $x_2u_2, x_2u_3\notin E(G)$, we have that $d(N(u_1)\setminus N[v_1], N(v_1)\setminus N[u_1])=d(\{x_1, x_2\},N(v_1)\setminus N[u_1])\geq 2$. Moreover, we know that $v_1$ has a neighbor $w\in N(v_1)\setminus N[u_1]$ that is not contained in $S$, and so $d(\{x_1, x_2\}, w)\geq 3$. This implies that $d(x_1, \phi(x_1))+d(x_2, \phi(x_2))\geq 3+2=5$ for any bijection $\phi: \{x_1, x_2\} \rightarrow N(v_1)\setminus N[u_1]$, a contradiction. Hence, $N(x_2)\cap \{u_2, u_3\}\neq \emptyset$, and we may assume that $x_2u_3\in E(G)$ and $x_2u_2\notin E(G)$ by Observation~\ref{obs:4-reg}. We consider the edge $u_1x_1$. We know that $N(u_1)\setminus N[x_1]=\{v_1, v_2\}$ and let $N(x_1)\setminus N[u_1]=N(x_1)\setminus \{u_1, x_2\}=\{y_1, y_2\}$. Since $N(x_1)\cap \{u_2, u_3\}=\emptyset$, all distances between $\{v_1, v_2\}$ and $\{y_1, y_2\}$ are at least two. Note there exists a bijection $\phi: \{v_1, v_2\}\rightarrow\{y_1, y_2\}$ such that $d(v_1, \phi(v_1))+d(v_2, \phi(v_2))\leq 4$ as $G$ is positively curved. Thus, $d(v_1, \phi(v_1))=d(v_2, \phi(v_2))=2$. By Observation~\ref{obs:4-reg}, we may assume $v_1u_2y_1, v_2u_3y_2$ are distance two paths. Since $N_{C_1}(u_3)=\{x_2, y_2\}$, we have $x_2y_2\in E(G)$. Observe that $y_1y_2\notin E(G)$, otherwise $\{u_2, y_1\}$ is a $2$-cut.  Consider the edge $y_1u_2$. Let $N(y_1)\setminus N[u_2]=\{x_1, z\}$. Note that $z\notin N(\{u_1,u_3\})$. It follows that $d(z, N(u_2)\setminus N[y_1])\geq 3$. Since $d(x_1, N(u_2)\setminus N[y_1])\geq 2$, we have that the edge $y_1u_2$ has non-positive curvature, a contradiction. Therefore, $x_1$ has a neighbor in $\{u_2, u_3\}$, and by symmetry, we have $x_2$ has a neighbor in $\{u_2, u_3\}$. We may assume that $x_1u_2, x_2u_3\in E(G)$. This implies that $C_1$ is a triangle and similarly, $C_2$ is a triangle. Hence, $G$ is the rectified triangular prism.

   We choose a $3$-cut $S=\{u_1, u_2, u_3\}$ of $G$ such that $C$ is a minimum-vertex component of $G-S$. We know that every vertex of $S$ has at least two neighbors in $C$ by the minimality of $C$, and moreover, no vertex in $C$ is adjacent to all vertices of $S$, and no pair of vertices of $S$ has two common neighbors in $C$. Note that since $G$ is $3$-connected, every vertex in $S$ has at least one neighbor in $G-S-C$. We show that each vertex of $S$ has exactly two neighbors in $C$ and exactly two neighbors in $G-S-C$. It suffices to show each $u\in S$ has exactly two neighbors in $G-S-C$. Suppose not. We may assume that $u_1\in S$ has exactly one neighbor, say $v_1$, in $G-S-C$. Suppose $u_1u_2\notin E(G)$ and $u_1u_3\notin E(G)$. It follows that $u_1$ has three neighbors, say $x_1, x_2, x_3$, in $C$, and so, $u_1$ and $v_1$ have no common neighbor. Since $|N_C(u_1, u_2)|\leq 1$ and  $|N_C(u_1, u_3)|\leq 1$, we may assume that $x_1\notin N(u_2)\cup N(u_3)$. Let $y_1$ denote a vertex in $N(v_1)\setminus S$. We have that $d(x_1, y_1)\geq 3$. Consider a bijection $\phi:N(u_1)\setminus \{v_1\} \rightarrow N(v_1)\setminus \{u_1\}$ with $\sum_{v\in N(u_1)\setminus \{v_1\}} d(v, \phi(v))=d(x_1, \phi(x_1))+d(x_2, \phi(x_2))+d(x_3, \phi(x_3))\leq 4$. This implies that $\phi(x_1)\neq y_1$. Then since $d(x_1, N(v_1)\setminus \{u_1\})\geq 2$ and $d(N(u_1)\setminus \{v_1\},y_1)\geq 2$, such bijection $\phi$ with $\sum_{v\in N(u_1)\setminus \{v_1\}} d(v, \phi(v))\leq 4$ do not exist, giving a contradiction. We may now assume that $u_1$ is adjacent to $u_2$ or $u_3$. Since $u_1$ cannot be adjacent to both $u_2$ and $u_3$, we may assume that $u_1u_2\in E(G)$ and $u_1u_3\notin E(G)$. Observe that $u_2$ has a unique neighbor in $G-S-C$ as $u_2$ is adjacent to $u_1$ and $u_2$ has at least two neighbors in $C$. It follows that $v_1$ is not adjacent to $u_2$, otherwise $\{v_1, u_3\}$ is a $2$-cut of $G$. This implies that $v_1$ and $u_1$ has no common neighbor in $G$. Let $N_C(u_1)=\{x_1, x_2\}$. Note that there exists a bijection $\phi: N(u_1)\setminus \{v_1\} \rightarrow N(v_1)\setminus \{u_1\}$ such that $\sum_{v\in N(u_1)\setminus \{v_1\}} d(v, \phi(v))=d(u_2, \phi(u_2))+d(x_1, \phi(x_1))+d(x_2, \phi(x_2))\leq 4$. Observe that at most one vertex of $x_1, x_2$ is adjacent to $u_3$. We may assume that $x_1u_3\notin E(G)$. Then, $d(x_1, N(v_1)\setminus\{u_1\})\geq 2$ and so we have that $d(x_1,\phi(x_1))=2, d(x_2, \phi(x_2))=1, d(u_2, \phi(u_2))=1$. $d(x_2, \phi(x_2))=1$ implies that $u_3=\phi(x_2)$ is adjacent to $v_1$ and $x_2$, and $d(u_2, \phi(u_2))=1$ implies that $\phi(u_2)$ is adjacent to $u_2$ and $v_1$. Note that $\phi(u_2)$ is the unique neighbor of $u_2$ in $G-S-C$. Hence, $\phi(x_1)$ is not adjacent to $u_2$. Then $d(x_1, \phi(x_1))=2$ implies that $\phi(x_1)$ and $x_1$ are adjacent to $u_3$, contradicting that $x_1u_3\notin E(G)$. Therefore, every vertex of $S$ has exactly two neighbors in each of $C$ and $G-S-C$. Hence, $G$ is the rectified triangular prism if $G$ has connectivity three.
\end{proof}
We next classify all of the positively curved $4$-regular $4$-connected planar graphs. We first show that such a graph $G$ has a separating $4$-cycle if and only $G$ is a planar triangulation (equivalently, if and only if $G$ is the octahedron). 
\begin{lemma}\label{lem:sep_4-cycle}
    For a $4$-regular $4$-connected positively curved planar graph $G$, $G$ has a separating $4$-cycle if and only if $G$ is a planar triangulation (that is, $G$ is the octahedron).
\end{lemma}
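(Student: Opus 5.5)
We prove both directions, with the ``only if'' direction as the main work: we take an innermost separating $4$-cycle and use positive curvature on the edges from the cycle into its interior to show that the interior is a single vertex, and likewise the exterior.

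\emph{The ``if'' direction.} Suppose $G$ is a $4$-regular planar triangulation. Euler's formula gives $|E(G)|=3n-6$, while $4$-regularity gives $|E(G)|=2n$. Hence $n=6$, and the only $4$-regular triangulation on six vertices is the octahedron. Its equatorial $4$-cycle separates the two poles, so it is a separating $4$-cycle.

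\emph{Setting up the ``only if'' direction.} Let $C=u_1u_2u_3u_4$ be a separating $4$-cycle of $G$. Among all separating $4$-cycles, choose $C$ with $|\Int(C)|$ minimum. First we rule out chords. A chord such as $u_1u_3$ lies on one side of $C$, say inside. If $\Int(C)\neq\emptyset$, the separating triangles $u_1u_2u_3$ or $u_1u_3u_4$ would give a $3$-cut, contradicting $4$-connectivity. If $\Int(C)=\emptyset$, then $C$ is not separating on that side, so only the exterior contains vertices. Then $\{u_1,u_3\}$ together with the vertices they reach would again give a small cut; I expect this to be routine to rule out. So $C$ is induced.

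\emph{Each cycle vertex has one neighbor on each side.} Since $C$ is induced, each $u_i$ has exactly two neighbors off $C$. By $4$-connectivity, $V(C)$ is a minimal cut, so each $u_i$ has a neighbor in both $\Int(C)$ and $\Ext(C)$. Hence $u_i$ has exactly one interior neighbor $x_i$ and exactly one exterior neighbor $y_i$.

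\emph{Key curvature step.} We show each $x_i$ is adjacent to $u_{i-1}$ or $u_{i+1}$. Suppose not. Then $N(u_i)\setminus N[x_i]=\{u_{i-1},u_{i+1},y_i\}$ and $N(x_i)\setminus N[u_i]\subseteq \Int(C)$. Any path from $y_i$ to an interior vertex passes through $C$ and must use at least one more vertex of $C$ besides $u_i$, since $u_i$'s only interior neighbor is $x_i$. So $y_i$ has distance at least $3$ from every vertex of $N(x_i)\setminus N[u_i]$. Every matched pair costs at least $1$, so any bijection has total cost at least $1+1+3=5>4$. By Theorem~\ref{thm:liluthm}, this contradicts positive curvature of $u_ix_i$.

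\emph{Collapsing the interior to one vertex.} Suppose $x_i$ is adjacent to $u_{i+1}$. Since $u_{i+1}$ has a unique interior neighbor, $x_i=x_{i+1}$. Therefore the interior neighbors $x_1,\dots,x_4$ take at most two distinct values, and we split into two cases.
\begin{itemize}
\item If all $x_i$ are equal to a single vertex $x$, then $x$ has degree $4$ into $C$. So $\Int(C)=\{x\}$; otherwise $x$ would be a cut vertex.
\item Otherwise the pattern is, up to symmetry, $x_1=x_2=:a$ and $x_3=x_4=:b$ with $a\neq b$. The pattern $x_1=x_2=x_3\neq x_4$ is impossible, because $x_4$ must equal $x_3$ or $x_1$. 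In this case $\{a,b\}$ separates $\Int(C)\setminus\{a,b\}$ from $C$. By $4$-connectivity, $\Int(C)=\{a,b\}$. But then $a$ has neighbors only among $u_1,u_2,b$, giving degree at most $3$, a contradiction.
\end{itemize}
So $\Int(C)$ is a single vertex $x$ adjacent to all of $C$.

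\emph{Conclusion.} Apply the same argument to $\Ext(C)$ using the edges $u_iy_i$. The curvature step does not use minimality of $C$, and the collapsing step only uses $4$-connectivity. Hence $\Ext(C)=\{y\}$ with $y$ adjacent to all of $C$. Then $G$ has vertex set $V(C)\cup\{x,y\}$ and is the octahedron, which is a triangulation. I expect the main obstacle to be the chord case and checking the distance-$3$ claim carefully when $x_i$ might share neighbors with $u_{i\pm1}$ through interior vertices.
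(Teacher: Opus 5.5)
\textbf{The gap is in your key curvature step: the claim that $y_i$ is at distance at least $3$ from every vertex of $N(x_i)\setminus N[u_i]$ does not follow from what you have proved, and can fail.} Your reasoning shows only that a path from $y_i$ into $\Int(C)$ does one of two things. Either it enters through $u_i$ and then $x_i$, which has length at least $3$. Or it enters through some other $u_j$, which has length exactly $2$ whenever $y_iu_j\in E(G)$ and the target is $x_j$. Nothing you have established stops $y_i$ from having a second neighbor on $C$. In fact this is forced whenever the opposite side is a single vertex.

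Your own final step hits exactly this. Once $\Int(C)=\{x\}$ with $x$ adjacent to all of $C$, rerunning the argument on $\Ext(C)$ needs $d(x,w)\ge 3$ for $w\in N(y_i)\setminus N[u_i]$. But $d(x,y_{i+2})=2$ via $u_{i+2}$. Concretely, suppose the $y_i$ are distinct and $y_1$ is adjacent to $y_2,y_3,y_4$. Then the bijection $u_2\mapsto y_2$, $u_4\mapsto y_4$, $x\mapsto y_3$ costs $1+1+2=4\le d$, so positive curvature of $u_1y_1$ gives no contradiction. The same problem arises on the interior side if $\Ext(C)$ is a single vertex, which minimizing $|\Int(C)|$ does not exclude.

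There is a related omission in your ``suppose not'' branch: you must also exclude $x_iu_{i+2}\in E(G)$. Otherwise $u_{i+2}\in N(x_i)\setminus N[u_i]$ does not lie in $\Int(C)$.

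\textbf{What is missing is a separate treatment of the case where one side is a single vertex and the other side has at least two vertices.} On a side with at least two vertices, $4$-connectivity alone (no curvature needed) shows that each attachment vertex has exactly one neighbor on $C$. Otherwise that vertex together with the two remaining cycle vertices is a $3$-cut. So the four attachment vertices $x_1,\dots,x_4$ are distinct, and this also disposes of the $x_iu_{i+2}$ case.

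If both sides have at least two vertices, the $y_i$ are distinct too. Your distance-$3$ bound is then valid and your argument works. If instead one side is a single vertex, curvature of $u_1x_1$ only forces the tight assignment $u_2\mapsto x_2$, $u_4\mapsto x_4$, and the single vertex $\mapsto x_3$. This makes $x_1$ adjacent to $x_2,x_3,x_4$, and by symmetry the $x_i$ form a $K_4$. To finish you need planarity: contracting $C$ to a point gives a $K_5$ minor. This is how the paper's proof concludes.

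Finally, the $\Int(C)=\emptyset$ subcase in your chord discussion is vacuous, because a separating cycle has vertices on both sides. More simply, a chord would leave some $u_i$ with only one neighbor off $C$, contradicting that it needs a neighbor on each side.
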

\begin{proof}
    Let $G$ be a $4$-regular $4$-connected positively curved plane graph. The backward direction is straightforward. Then it suffices to show that if $G$ has a separating $4$-cycle, say $C=u_1u_2u_3u_4u_1$, then $G$ is the octahedron.
To show $G$ is the octahedron, we need to show that both $\Int(C)$ and $\Ext(C)$ have exactly one vertex. Suppose $\Int(C)$ has more than one vertex. Observe that each $u_i$ has exactly one neighbor in $\Int(C)$ as $G$ is $4$-connected. We claim that for each $x\in N_{\Int(C)} (V(C))$, $N(x)\cap V(C)$ has only one vertex. If $xu_i, xu_j\in E(G)$ for some distinct $i,j\in [4]$, then $x$ with the other two vertices in $V(C)\setminus \{u_i, u_j\}$ forms a $3$-cut of $G$. Let $x_i$ denote the unique neighbor of $u_i$ in $\Int(C)$. It follows that $x_1, x_2, x_3, x_4$ are pairwise distinct. We consider the edge $u_1x_1$. Let $v_1$ denote the unique neighbor of $u_1$ in $\Ext(C)$. Then, $ N(u_1)\setminus N[x_1]=\{v_1, u_2, u_4\}$ and we know that there is a bijection $\phi: \{v_1, u_2, u_4\} \rightarrow N(x_1)\setminus \{u_1\}$ such that $d(v_1, \phi(v_1))+d(u_2, \phi(u_2))+d(u_4, \phi(u_4))\leq 4$. Observe that $d(v_1,N(x_1)\setminus \{u_1\})\geq 2$. It follows that $d(v_1, \phi(v_1))=2, d(u_2, \phi(u_2))=d(u_4, \phi(u_4))=1$. This implies that $\phi(u_2)=x_2, \phi(u_4)=x_4, \phi(v_1)=x_3$. Hence, $x_1x_i\in E(G)$ for each $i\in \{2, 3,4\}$. By symmetry, we have that $x_ix_j\in E(G)$ for any distinct $i,j\in [4]$, which contradicts the planarity of $G$. Therefore, $\Int(C)$ contains only one vertex, and by symmetry, $\Ext(C)$ contains only one vertex. Since $G$ is $4$-regular, $G$ is the octahedron.
\end{proof}

The following corollary is immediate from Lemma~\ref{lem:sep_4-cycle}.

\begin{corollary}\label{cor:4-cycle}
   Let $G$ be a $4$-regular $4$-connected positively curved plane graph such that $G$ is not the octahedron. Suppose $u_1u_2$ is an edge in $G$ and $d(x_1, x_2)=1$ for some $x_1\in N(u_1)\setminus N[u_2], x_2\in N(u_2)\setminus N[u_1]$. Then $u_1u_2x_2x_1u_1$ is a facial cycle of length four.
\end{corollary}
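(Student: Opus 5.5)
The plan is to observe that $C=u_1u_2x_2x_1u_1$ is a $4$-cycle in $G$, and then to rule out the possibility that it is separating using Lemma~\ref{lem:sep_4-cycle}.

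First I will check that $C$ really is a cycle. The hypotheses give the edges $u_1u_2$, $u_1x_1$, $u_2x_2$, and $d(x_1,x_2)=1$ gives the edge $x_1x_2$. The four vertices are distinct: $x_1\notin N[u_2]$ means $x_1\ne u_2$, $x_1\ne u_1$ since $x_1\in N(u_1)$, and symmetrically for $x_2$. Finally $x_1\ne x_2$, because $x_1\in N(u_1)$ while $x_2\notin N[u_1]$.

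Next, since $G$ is $4$-regular, $4$-connected, positively curved and not the octahedron, Lemma~\ref{lem:sep_4-cycle} says $G$ has no separating $4$-cycle. So $C$ does not separate $G$, which means one of the two open regions of the plane bounded by $C$ contains no vertex of $G$.

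The remaining step is to show that this empty region contains no chord of $C$. The only possible chords are $u_1x_2$ and $u_2x_1$, and both are excluded: $x_2\notin N[u_1]$ and $x_1\notin N[u_2]$. So the empty region is free of both vertices and edges, and hence it is a face. Therefore $C$ is a facial cycle of length four.

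The only points needing care are the distinctness of the four vertices and the absence of chords, and both follow directly from the hypotheses $x_1\in N(u_1)\setminus N[u_2]$ and $x_2\in N(u_2)\setminus N[u_1]$.
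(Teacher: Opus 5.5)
Your proof is correct and follows the paper's route. The paper simply declares the corollary immediate from Lemma~\ref{lem:sep_4-cycle}, and you supply exactly the omitted details: the four vertices are distinct, the $4$-cycle is non-separating by that lemma, and the chords $u_1x_2$ and $u_2x_1$ are absent, so the vertex-free side is a face.
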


We now use Corollary~\ref{cor:4-cycle} to classify all of the $4$-regular 4-connected positively curved planar graphs which are not planar triangulations.  

\begin{lemma}\label{lem:4-reg_no_sep}
    Let $G$ be a $4$-regular $4$-connected positively curved planar graph such that $G$ is not the octahedron. Then $G$ is  the square antiprism,  the cuboctahedron, or the pentagonal antiprism.
\end{lemma}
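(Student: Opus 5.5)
The plan is to describe the face structure of a plane embedding of $G$ and then use Euler's formula together with local propagation. Throughout, $G$ is $4$-regular, $4$-connected, planar, positively curved, and not the octahedron.

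\textbf{Step 1: edges adjacent to triangles and quadrilaterals.} Every triangle of $G$ is facial. A separating triangle would be a $3$-cut, which $4$-connectivity forbids. Every $4$-cycle formed by an edge $u_1u_2$ and an edge $x_1x_2$ with $x_i\in N(u_i)\setminus N[u_{3-i}]$ is a facial quadrilateral, by Corollary~\ref{cor:4-cycle}. Since $G$ is $4$-regular, each edge $xy$ has $|N(x)\cap N(y)|\le 2$. If it were $2$, both triangles through $xy$ would be faces. Following the faces around $x$ and $y$ would then give $K_4$-like or octahedral local structure, which I expect forces a triangulation, i.e.\ the octahedron. So each edge lies in at most one triangle.

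By Theorem~\ref{thm:liluthm}, positive curvature of $xy$ means there is a bijection between $N(x)\setminus N[y]$ and $N(y)\setminus N[x]$ of total cost at most $4$.
\begin{enumerate}[(a)]
\item If $xy$ lies in no triangle, there are three pairs with total cost at most $4$. At least two pairs are then at distance $1$, giving two distinct facial quadrilaterals through $xy$. So both faces at $xy$ are quadrilaterals.
\item If $xy$ lies in exactly one triangle, there are two pairs with total cost at most $4$. Either some pair is at distance $1$, so the face on the other side is a quadrilateral, or both pairs are at distance exactly $2$.
\end{enumerate}
In case (b) with both pairs at distance $2$, I will use planarity and $4$-connectivity to show that the face on the non-triangle side has length exactly $5$. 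The idea is that the distance-$2$ paths must run around that face, because a path leaving through another region would create a small separating set.

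\textbf{Step 2: local vertex types.} Consequently every face has length $3$, $4$ or $5$, and every edge lies on at least one face of length at most $4$. Moreover, no two non-triangular faces share an edge unless both are quadrilaterals. A vertex has four incident faces, and no two triangles share an edge there, since each edge lies in at most one triangle. Enumerating the cyclic face sequences at a vertex consistent with (a) and (b) should leave exactly three types:
\begin{itemize}
\item $(3,4,3,4)$, the cuboctahedron type;
\item $(3,3,3,4)$, the square antiprism type;
\item $(3,3,3,5)$, the pentagonal antiprism type.
\end{itemize}
I would also exclude all-quadrilateral vertices and sequences such as $(4,4,4,4)$ by re-checking curvature of the edges at such a vertex with the explicit distances.

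\textbf{Step 3: propagation and identification.} I would show that adjacent vertices have the same type. For instance, a vertex of type $(3,3,3,5)$ adjacent along its pentagon to a vertex of type $(3,4,3,4)$ should give an edge whose two pairs cannot both have cost at most $2$. Connectivity then makes the type global. Each type determines $G$: the face sequence and Euler's formula fix $|V|=8$, $12$ or $10$ respectively, and the combinatorics is then forced. Finally I would check that the three graphs are positively curved, which is a direct computation.

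\textbf{Main obstacle.} The hardest step is (b) in the case where both pairs are at distance $2$, namely proving that the face is a pentagon, together with the mixed-type exclusions in Step 3. My first attempt would argue with explicit $2$-paths and derive a $3$-cut or a $2$-cut from planarity whenever the paths do not bound a common face, in the same way as the proofs of Lemmas~\ref{lem:4_reg_3_conn} and~\ref{lem:sep_4-cycle}.
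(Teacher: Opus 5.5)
\textbf{The claim in Step~1 that every edge lies in at most one triangle is false.} It fails on two of the graphs you are trying to reach. In the square antiprism, take squares $a_1a_2a_3a_4$ and $b_1b_2b_3b_4$ with $a_i\sim b_i,b_{i+1}$ (indices mod $4$). The edge $a_ib_{i+1}$ then lies in the two facial triangles $a_ia_{i+1}b_{i+1}$ and $a_ib_ib_{i+1}$. Likewise every lateral edge of the pentagonal antiprism lies in two triangles, and neither graph is the octahedron. So your expectation that two triangles on an edge force a triangulation cannot be proved.

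Your own Step~2 also contradicts it. At a vertex of type $(3,3,3,4)$ or $(3,3,3,5)$, the two edges between consecutive triangles each lie in two triangles. Yet you had just used ``no two triangles share an edge there'' to restrict the enumeration. Applied consistently, that premise would exclude both antiprisms outright, so the enumeration as written has no valid basis. (Separately, $|N(x)\cap N(y)|\le 2$ follows from planarity and $4$-connectivity, not from $4$-regularity: three common neighbours would give a non-facial, hence separating, triangle.)

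\textbf{Without that premise, Step~2 has more types to exclude, and edge-curvature at one vertex does not exclude them.} You must also dispose of $(3,3,4,4)$, $(3,4,4,4)$, $(4,4,4,4)$, $(3,4,3,5)$ and $(3,5,3,5)$. ``Re-checking curvature of the edges at such a vertex'' does not do this. Take a vertex $v$ with rotation $a,b,c,d$ and faces $vab$, $vbc$, $vcpd$, $vdqa$.
\begin{itemize}
\item The edges $va$, $vb$, $vc$ automatically admit bijections of cost at most $4$, since the leftover pairs are joined by paths of length at most $3$ through $b$.
\item The edge $vd$ only needs $d(b,r)\le 2$ for the fourth neighbour $r$ of $d$, and nothing in the star of $v$ forbids this.
\end{itemize}
Excluding two quadrilateral faces on a common edge is the genuinely non-local part of the argument. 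In the paper, with facial $4$-cycles $u_1u_2u_3u_4$ and $u_1u_2x_2x_1$, positivity of $\kappa(u_1,u_2)$ gives a common neighbour $z$ of the fourth neighbours $y_1,y_2$. Analysing the face on $u_2u_3$ then produces a $3$-cut. The same objection applies to the propagation in Step~3, which rests on a single asserted example.

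\textbf{How the paper avoids this.} It never enumerates vertex types or propagates them. It fixes a facial cycle $C$ of maximum length $k$.
\begin{itemize}
\item For $k\ge 5$, Corollary~\ref{cor:4-cycle} leaves at most one distance-$1$ pair at each edge of $C$, which forces a triangle on the other side. The cost on $u_1u_2$ then forces $k=5$ and the pentagonal antiprism.
\item For $k=4$, it shows quadrilaterals cannot share an edge. It then shows the faces $D_i$ containing $x_iu_{i+1}x_{i+1}$ are either all triangles (square antiprism) or all quadrilaterals (cuboctahedron).
\end{itemize}
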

\begin{proof}
    Let $G$ be a $4$-regular $4$-connected positively curved planar graph such that $G$ is not the octahedron. Then $G$ is not a planar triangulation, and we choose a facial cycle $C=u_1u_2\ldots u_ku_1$ of maximum length. Since $G$ is $4$-connected, we know that for any $u,v\in V(C)$ with $uv\notin E(C)$, we have $uv\notin E(G)$ and $u,v$ cannot have a common neighbor in $G-C$.

    Suppose $k\geq 5$. We first show that the other face containing $u_1u_2$ is bounded by a triangle. Suppose the other face containing $u_1u_2$ is bounded by $C_1$ of length at least four. We consider the edge $u_1u_2$. Since $G$ is $4$-connected, $u_1$ and $u_2$ have no common neighbor. By Corollary~\ref{cor:4-cycle}, if $d(x_1, x_2)=1$ for some $x_1\in N(u_1)\setminus N[u_2], x_2\in N(u_2)\setminus N[u_1]$, then $C_1=u_1u_2x_2x_1u_1$. Hence, for any bijection $\phi: N(u_1)\setminus N[u_2]\rightarrow N(u_2)\setminus N[u_1]$, we have $\sum_{v\in N(u_1)\setminus N[u_2]} d(v, \phi(v))\geq 1+2+2=5$, a contradiction. Hence, the other face containing $u_1u_2$ is bounded by a triangle. By symmetry, this holds for all $u_iu_{i+1}$ for $i\in [k]$, and let $x_i$ denote the unique common neighbor of $u_i, u_{i+1}$. We know that $x_1, x_2, \ldots, x_k$ are pairwise distinct as $G$ is $4$-connected, and $N(V(C))=\{x_1, x_2, \ldots, x_k\}$. Now we show that for each $j\in [3, k-1]$, $x_1x_j\notin E(G)$ and $x_1,x_j$ cannot have a common neighbor in $G-C-N(V(C))$. Suppose $x_1x_j\in E(G)$, and let $D=x_1u_2 \ldots u_j x_jx_1$. We may assume that $x_2$ is contained in $\Int(D)$ and $x_k$ is contained in $\Ext(D)$. Let $y_1$ denote the neighbor of $x_1$ in $N(x_1)\setminus \{u_1, u_{2}, x_{j}\}$. If $y_1$ is contained in $\Int(D)$, then $\{u_1, u_j, x_j\}$ is a $3$-cut of $G$, a contradiction. Otherwise, if $y_1$ is contained in $\Ext(D)$, then $\{u_2, u_{j+1}, x_j\}$ is a $3$-cut of $G$, a contradiction. Hence, $x_1x_j\notin E(G)$. Suppose that $y$ is a common neighbor of $x_1,x_j$ in $G-C-N(V(C))$. Let $D=x_1u_2 \ldots u_j x_jyx_1$, and we may assume that $x_2$ is contained in $\Int(D)$ and $x_k$ is contained in $\Ext(D)$. Let $y_1$ be the neighbor of $x_1$ such that $y_1\neq u_1, u_2, y$, and let $y_j$ be the neighbor of $x_j$ such that $y_j\neq u_j, u_{j+1}, y$. If both $y_1, y_j$ are contained in $\Int(D)$, then $\{u_1,y, u_{j+1}\}$ is a $3$-cut of $G$. By symmetry, we may assume that $y_1\in V(\Int(D))$ and $y_j\in V(\Ext(D))$. Then $\{u_1, y, u_j\}$ is a $3$-cut. Hence, $x_1,x_j$ have no common neighbor in $G-C-N(V(C))$. Therefore, it follows from symmetry that for each $1\le i < j \leq k$ with $(i,j)\neq (1,k)$ and $j-i>1$, we have that $x_ix_j\notin E(G)$ and $x_i,x_j$ have no common neighbor in $G-C-N(V(C))$.

    Note that $N(u_1)\setminus N[u_2]=\{x_k, u_k\}$ and $N(u_2)\setminus N[u_1]=\{x_2, u_3\}$. Since $G$ is positively curved, there exists a bijection $\phi: \{x_k, u_k\} \rightarrow \{x_2, u_3\}$ such that $d(x_k, \phi(x_k))+d(u_k,\phi(u_k))\leq 4$. Observe that $d(\{x_k, u_k\}, \{x_2, u_3\})\geq 2$. Note that $x_k$ is not adjacent to $x_2, x_3$ since $k\geq 5$, $x_k$ is not adjacent to $u_2, u_4$ as $G$ is $4$-connected. This implies that $d(x_k, u_3)\geq 3$, and similarly $d(u_k, x_2)\geq 3$. It follows that $\phi(x_k)=x_2$ and $\phi(u_k)=u_3$. Then $d(x_k, x_2)=2$ and $d(u_k, u_3)=2$. $d(x_k, x_2)=2$ implies that $x_kx_1, x_1x_2\in E(G)$, and $d(u_k, u_3)=2$ implies that $k=5$. Thus, $C$ is $5$-cycle and $\{x_1, x_2, \ldots, x_5\}$ induces a $5$-cycle in $G$. Therefore, $G$ is the pentagonal antiprism if $k\geq 5$.

    We may now assume that all faces of $G$ are bounded by $4$-cycles or triangles. Let $C=u_1u_2u_3u_4u_1$ be a facial cycle of $G$. We show that the other facial cycle containing $u_1u_2$ is a triangle. Suppose not. Then the other facial cycle containing $u_1u_2$ is a $4$-cycle, say $C_1=u_1u_2x_2x_1u_1$. It follows that $u_1,u_2$ have no common neighbor as $G$ is $4$-connected. Let $y_1\in N(u_1)\setminus \{u_2, u_4, x_1\}$ and $y_2\in N(u_2)\setminus \{u_1, u_3, x_2\}$. We claim that $y_1, y_2$ have a common neighbor, say $z$, in $G-C-C_1$.
    Note that $N(u_1)\setminus N[u_2]=\{x_1, u_4, y_1\}$ and $N(u_2)\setminus N[u_1]=\{x_2, u_3, y_2\}$.  We know that there exists a bijection $\phi: \{x_1, u_4, y_1\} \rightarrow \{x_2, u_3, y_2\}$ such that $d(x_1, \phi(x_1))+d(u_4,\phi(u_4))+d(y_1,\phi(y_1))\leq 4$. By Corollary~\ref{cor:4-cycle}, we know that for $v\in N(u_1)\setminus N[u_2]=\{x_1, u_4, y_1\}, w\in N(u_2)\setminus N[u_1]=\{x_2, u_3, y_2\}$, if $d(v, w)=1$ then $(v, w)=(x_1,x_2)$ or $(u_4, u_3)$. This implies that $\phi(y_1)=y_2$ and $d(y_1, y_2)=2$. It follows that $y_1, y_2$ have a common neighbor $z\in V(G-C-C_1)$. Now we show that the facial cycle $C_2$ with $C_2\neq C$ containing the edge $u_2u_3$ is a triangle. Suppose $C_2$ is a $4$-cycle. Let $y_3\in N(u_3)\setminus (V(C)\cup V(C_2))$. Since $G$ has no separating $4$-cycle or triangle, we know that $y_3\notin \{z, y_1, y_2\}$. We consider the edge $u_2u_3$. Since $G$ is positively curved, we know the distance between $x_2\in N(u_2)\setminus (V(C)\cup V(C_2))$ and $y_3$ is two. Observe the common neighbor of $x_2$ and $y_3$ can only be contained in $\{z, y_1, y_2\}$. Note that $x_2y_1, y_3y_2\notin E(G)$ as $G$ has no separating $4$-cycle. It follows that $x_2z, y_3z\in E(G)$, and $N(z)=\{y_1, y_2, x_2, y_3\}$. Then $\{x_2, u_1, y_1\}$ is a $3$-cut of $G$, a contradiction. Then $C_2$ is a triangle and $y_2u_3\in E(G)$. By symmetry, we have $y_2x_2, y_1x_1, y_1u_4\in E(G)$. Note that $N(y_1)=\{z, x_1, u_1, u_4\}$ and $N(y_2)=\{z, x_2, u_2, u_3\}$. This implies that either $\{z, x_1, x_2\}$ or $\{z, u_4, u_3\}$ is a $3$-cut of $G$, giving a contradiction. Hence, the other facial cycle containing $u_1u_2$ is a triangle, and by symmetry, this holds for all $u_iu_{i+1}$ for $i\in [4]$, and let $x_i$ denote the unique common neighbor of $u_i, u_{i+1}$. We know that $x_1, x_2, x_3, x_4$ are pairwise distinct as $G$ is $4$-connected, and $N(V(C))=\{x_1, x_2, x_3, x_4\}$. Let $D_i$ denote the facial cycle containing $x_iu_{i+1}x_{i+1}$ for each $i\in [4]$. We know that each $D_i$ is a triangle or a $4$-cycle. We claim that either all $D_i$ are triangles or all $D_i$ are $4$-cycles. Suppose not. We may assume that $D_1=x_1u_2x_2y_1x_1$ is a $4$-cycle and $D_2=x_2u_3x_3x_2$ is a triangle. Observe that $x_1x_3, x_2x_4\notin E(G)$ as $G$ has no separating $4$-cycle. It follows that $y_1\notin V(C)\cup N(V(C))$. Consider the vertex $x_2$. Note that $N(x_2)=\{u_2, y_1, x_3, u_3\}$. Since two facial $4$-cycles cannot be adjacent, $y_1x_3\in E(G)$. This implies that $\{y_1, u_2, u_4\}$ is a $3$-cut, a contradiction. Hence, either all $D_i$ are triangles or all $D_i$ are $4$-cycles. If all $D_i$ are triangles, we know that $G$ is the square antiprism. We may now assume that all $D_i$ are $4$-cycles, and let $D_i=x_iu_{i+1}x_{i+1}y_ix_i$ for each $i\in [4]$. Note that each $y_i$ is contained in $G-C-N(V(C))$ and $y_1, y_2, y_3, y_4$ are pairwise distinct as $G$ has no separating $4$-cycles. Since $G$ cannot have two adjacent facial $4$-cycles, we have that $y_1y_2y_3y_4y_1$ is a $4$-cycle. This implies that $G$ is the cuboctahedron, which completes the proof.
\end{proof}
 Theorem~\ref{d4} follows immediately from Theorem~\ref{thm:3conn} and Lemmas~\ref{lem:4_reg_3_conn} and \ref{lem:4-reg_no_sep}.

\section{The positively curved $5$-regular planar graphs}

In this section, we prove Theorem~\ref{d5}, which states that there is only one $5$-regular positively curved planar graph, the icosahedral graph. For this section, we proceed using the Laplacian formulation of LLY curvature proven by M\"unch and Wojciechowski (cf. Definition~\ref{defn:llycurv}). 

Let $G$ be a connected graph and $S\subseteq V(G)$.
Suppose $S$ is a cut set of $G$, and let $C$ be a component of $G-S$. We define a function $f_{S,C}: V(G)\to \mathbb{Z}$ with respect to $S$ and $C$ as follows.
$$f_{S,C}(x):=\begin{cases}
               
                0 & \textrm{ if $x\in S$},\\
                -i & \textrm{ if $x\in N^i_C(S)$},\\
               i & \textrm{ if $x\in N^{i}_{G-S-C}(S)$}.
                 \end{cases}
$$
It can be easily shown that $f_{S,C}$ is a $1$-Lipschitz function for any vertex cut $S$ and any component $C$ of $G-S$.

By Theorem~\ref{thm:3conn}, every $5$-regular positively curved planar graph is $3$-connected. We first improve on this statement by showing that all $5$-regular positively curved planar graphs are $5$-connected.

\begin{theorem}\label{thm:5reg5conn}
    Every $5$-regular positively curved planar graph is $5$-connected.
\end{theorem}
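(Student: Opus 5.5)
By Theorem~\ref{thm:3conn}, $G$ is $3$-connected. We therefore argue by contradiction and suppose $G$ has a minimum vertex cut $S$ with $|S|\in\{3,4\}$. Since $G$ is planar, we can choose the cut so that $S$ separates $G$ into an interior and an exterior part. Among all such cuts of minimum size, we take $S$ and a component $C$ of $G-S$ with $|V(C)|$ minimum.

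The tool is the test function $f=f_{S,C}$, which is $1$-Lipschitz. Take an edge $ux$ with $u\in S$ and $x\in C$, so that $f(u)=0$ and $f(x)=-1$, hence $\nabla_{ux}f=1$. By Definition~\ref{defn:llycurv}, positivity of $\kappa(x,u)$ (equivalently of $\kappa(u,x)$, up to the sign convention) forces $\Delta f(u)-\Delta f(x)>0$, that is,
\[
\sum_{v\in N(u)} f(v) \;>\; \sum_{w\in N(x)}\bigl(f(w)+1\bigr).
\]
The left side counts neighbours of $u$ outside $C$ positively and neighbours inside $C$ negatively. On the right, each neighbour of $x$ in $S$ contributes $+1$, each neighbour at level $-1$ contributes $0$, and each at level $-2$ contributes $-1$. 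Writing $p$ and $q$ for the numbers of neighbours of $u$ in $G-S-C$ and in $C$ (so $p+q+|N_S(u)|=5$), the inequality reads
\[
p-q \;>\; |N_S(x)| - |N^2_C(S)\cap N(x)|.
\]
The symmetric choice, an edge from $u$ into $G-S-C$, gives the reverse-type inequality. Together these force each $u\in S$ to split its edges almost evenly between the two sides, and force the vertices of $C$ adjacent to $S$ to have few neighbours in $S$ and few at level $-2$.

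The next step is to combine these constraints with planarity and with the minimality of $S$ and $C$, as in Lemma~\ref{lem:4_reg_3_conn}. Minimality of the cut gives that no vertex of $C$ sees all of $S$ (otherwise it yields a smaller cut or $C$ is a single vertex), and planarity bounds how many common neighbours two vertices of $S$ can have in $C$. Degree counting then works as follows. Vertices of $S$ have degree $5$, and each sends roughly $2$ edges into $C$. Summing the per-edge inequalities over all edges between $S$ and $C$ should show that $N_C(S)$ has average degree into $S\cup N_C(S)$ too large. This would force $C$ to be tiny, either a single vertex or a triangle. A single vertex has degree $5>|S|$, a contradiction. A triangle (or small $C$) is then handled directly with Theorem~\ref{thm:liluthm}, by finding an edge $ux$ for which every bijection costs at least $6$, as in the $4$-regular case.

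The main obstacle is the case $|S|=4$, where $C$ may be larger: think of configurations like half an icosahedron cut along a $4$-set. There the averaged Laplacian inequality may be exactly tight. First, I would sum $\Delta f$ over all of $N_C(S)$ together with $S$, using $\sum_{v}\Delta f(v)$ over $C\cup S$ as a discrete divergence, to get a strict contradiction. If that fails, I would fall back to local bijection-cost arguments with Theorem~\ref{thm:liluthm}. In these, facial structure matters: in a $5$-regular planar graph, nontriangular faces near $S$ make some edge $ux$ cost at least $6$, while all-triangular neighbourhoods of $S$ make $S$ a separating cycle whose interior must again be tiny.
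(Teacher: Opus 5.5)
There is a genuine gap, and it starts with the one concrete inequality you write down. With $f=f_{S,C}$, $f(u)=0$ and $f(x)=-1$, Definition~\ref{defn:llycurv} gives $\kappa(x,u)\le \nabla_{xu}\Delta f=\Delta f(x)-\Delta f(u)$. So positive curvature forces $\Delta f(x)>\Delta f(u)$, that is,
\[
|N_S(x)|-|N(x)\cap N^2_C(S)| \;>\; p-q ,
\]
which is the reverse of your display. Compare the paper's $\kappa(v_1,u_1)\le \Delta f_2(v_1)-\Delta f_2(u_1)$, where $f_2(v_1)=-1$ and $f_2(u_1)=0$. As a check, in the icosahedron with $S=N(v)$ and $C=\{v\}$ your inequality reads $1>5$. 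Your qualitative reading is therefore also off: vertices of $N_C(S)$ are pushed to have \emph{many} neighbours in $S$ relative to $p-q$, not few.

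Even with the sign fixed, summing these inequalities over the $S$--$C$ edges cannot force $C$ to be a vertex or a triangle. Suppose $p_u=q_u$ for every $u\in S$, and every vertex of $N_C(S)$ (and of $N_{G-S-C}(S)$) has exactly two neighbours in $S$ and one in the next layer. Then every such inequality holds as $1>0$, however large $C$ is. This is exactly the configuration the paper has to rule out, and it does so by peeling rather than by showing $C$ is tiny.

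The paper proceeds in stages. First it shows each $u\in S$ has at least two neighbours in $G-S-C_1$. This already needs auxiliary cuts: $S'=(S\setminus\{u_1\})\cup\{x_1\}$ with component $G[C_1+u_1]$, and for $|S|=4$ also $S''=\{u_1,u_2,u_4\}\cup N_{C_1}(u_3)$. Next it shows each $u\in S$ has exactly two neighbours in $C_1$, and then that every vertex of $N_{C_1}(S)$ has exactly two neighbours in $S$. At that point $N_{C_1}(S)$ is itself a cut of size $|S|$ with a strictly smaller component behind it, contradicting the minimal choice of $(S,C_1)$. For $|S|=3$, peeling first rules out $|N_{C_1}(S)|=3$, and the argument ends with a direct Laplacian test on an edge $v_3u_2$.

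Your proposal contains neither the shifted cuts nor this use of minimality. The fallbacks (a divergence sum, unspecified bijection-cost and face arguments) are not carried out. Your guiding picture of ``half an icosahedron cut along a $4$-set'' also does not exist, since the icosahedron is $5$-connected.
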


\begin{proof}
Let $G$ be a $5$-regular positively curved planar graph. We know that $G$ is $3$-connected by Theorem~\ref{thm:3conn}. We first show that $G$ is $4$-connected. Suppose not. We choose $S\subseteq V(G)$ with $|S|=3$ such that $G-S$ has a vertex-minimum component, say $C_1$. Let $S =\{u_1,u_2,u_3\}$. Observe that each vertex of $S$ has at least two neighbors in $C_1$ as $G$ is $5$-regular and $C_1$ is vertex-minimum. By the vertex minimality of $C_1$ and planarity of $G$, we have the following observations.
\begin{observation}\label{obs:5reg1}

    All vertices of $S$ cannot have a common neighbor in $C_1$.
    \end{observation}
\begin{proof}
Suppose $u_1,u_2,u_3$ have a common neighbor in $C_1$, say $x$. Since $d(x)=5$, one of the vertex subsets $\{u_1,u_2,x\}, \{u_2,u_3,x\}, \{u_3,u_1,x\}$ must be a $3$-cut $S'$ such that one component of $G-S'$ has fewer vertices, contradicting the choice of $S$. 
\end{proof}
\begin{observation}\label{obs:5reg2}
Every two vertices of $S$ have at most one common neighbor in $C_1$.
\end{observation}
\begin{proof}
    Similar to the given proof of Observation~\ref{obs:5reg1}. 
\end{proof}
 We claim that any vertex of $S$ has at least two neighbors in $G-S-C_1$. Suppose not. Note that every vertex in $S$ has at least one neighbor that is not contained in $S\cup V(C_1)$ as $G$ is $3$-connected. Hence, without loss of generality, we may assume that $u_1$ has exactly one neighbor, say $x_1$, in $G-S-C_1$. We claim that $u_1$ is adjacent to $u_2$ and $u_3$, i.e., $u_1$ has exactly two neighbors in $C_1$. Let $S'$ denote the set $\{x_1, u_2,u_3\}$. Note that $S'$ is also a cut set of $G$ and $C_1'=G[C_1+u_1]$ is a component of $G-S'$. Consider the $1$-Lipschitz function $f_1:=f_{S',C_1'}$. Consider the edge $u_1x_1$. By the definition of $f_1$, it follows that $f_1(u_1)=-1, f_1(x_1)=0$,
\begin{align*}
\Delta f_1(u_1)&=\frac{ 1+|N(u_1)\cap \{u_2,u_3\}|-(4-|N(u_1)\cap \{u_2,u_3\}|-|N(u_1)\cap N_{C_1}(\{u_2,u_3\})|)}{5} \\&
=\frac{2|N(u_1)\cap \{u_2,u_3\}|+|N(u_1)\cap N_{C_1}(\{u_2,u_3\})|-3}{5},
\end{align*}
and 
\begin{align*}
    \Delta f_1(x_1)=\frac{-1+(4-|N(x_1)\cap \{u_2,u_3\}|)}{5}=\frac{3-|N(x_1)\cap \{u_2,u_3\}|}{5}.
\end{align*}
Hence, $$\kappa(u_1,x_1)\leq \Delta f_1(u_1)-\Delta f_1(x_1)=\frac{2|N(u_1)\cap \{u_2,u_3\}|+|N(u_1)\cap N_{C_1}(\{u_2,u_3\})|+|N(x_1)\cap \{u_2,u_3\}|-6}{5}.$$
Note that $|N(u_1)\cap N_{C_1}(\{u_2,u_3\})|\leq 2$ by Observations~\ref{obs:5reg1} and \ref{obs:5reg2}. Suppose $|N(u_1)\cap \{u_2,u_3\}|\leq 1$. Then $2|N(u_1)\cap \{u_2,u_3\}|+|N(u_1)\cap N_{C_1}(\{u_2,u_3\})|+|N(x_1)\cap \{u_2,u_3\}|\le 2\cdot 1 +2 +2\leq 6$, contradicting that $\kappa(u_1,x_1)>0$. Therefore, $|N(u_1)\cap \{u_2,u_3\}|=2$ and this implies that $u_1u_2, u_1u_3\in E(G)$. Moreover, since $\kappa(u_1,x_1)>0$, we have that $|N(x_1)\cap \{u_2, u_3\}|\ge 1$.

We define a new $1$-Lipschitz function $f_2:=f_{S,C_1}$.
We claim that for any $u\in \{u_2,u_3\}$, if $ux_1\in E(G)$ then $N(x_1)\cap N_{G-S-C_1}(u)=\emptyset$. Suppose not. Without loss of generality, we may assume $u_2x_1\in E(G)$ and $y_1\in N(x_1)\cap N_{G-S-C_1}(u_2)$ Since $u_2$ is adjacent to $u_1,x_1,y_1$, observe that $u_2$ has exactly two neighbors in $C_1$, and $N_{G-S-C_1}(S)=\{x_1, y_1\} \cup N_{G-S-C_1}(u_3)$.  Recall that each of $u_2,u_3$ has at most two neighbors in $G-S-C_1$.
Consider the edge $u_2y_1$. Note that $f_2(u_2)=0$, $f_2(y_1)=1$, and $\Delta f_2(u_2)=0$.
We show that $y_1u_3\in E(G)$. Assume that $y_1$ is not adjacent to $u_3$. Since $|N_{G-S-C_1}(u_3)|\leq 2$, we know that $N_{G-S-C_1}(S)=\{x_1, y_1\} \cup N_{G-S-C_1}(u_3)$ has at most $4$ vertices and $y_1$ has at least one neighbor contained in $N^2_{G-S-C_1}(S)$. It follows that $\Delta f_2(y_1)\ge 0$ and $\kappa(u_2, y_1)\le \Delta f_2(u_2)-\Delta f_2(y_1)\leq 0$, a contradiction. Hence, $y_1u_3\in E(G)$. Observe that $u_3x_1\notin E(G)$ and $N_{G-S-C_1}(u_3)\backslash \{y_1\}\ne \emptyset$. Otherwise, $N_{G-S-C_1}(S)=\{x_1,y_1\}$ is a $2$-cut, contradicting that $G$ is $3$-connected.
We now consider the edge $u_1x_1$. It follows that $ f_2(u_1)=0,  f_2(x_1)=1$, and $\Delta f_2(u_1)=\frac{-1}{5}$. Since $\kappa(u_1,x_1)>0$, we have that $\Delta f_2(x_1)<\frac{-1}{5}$. Note that $x_1u_3\notin E(G)$ and $N_{G-S-C_1}(S)=\{x_1, y_1\} \cup N_{G-S-C_1}(u_3)$ has three vertices. We know that $x_1$ has at least one neighbor in $N^2_{G-S-C_1}(S)$, and so $\Delta f_2(x_1)\ge \frac{-2+1}{5}=\frac{-1}{5}$, giving a contradiction. Therefore, for $u=u_2,u_3$, we have that $N(x_1)\cap N_{G-S-C_1}(u)=\emptyset$ if $x_1u\in E(G)$.

Recall that $|N(x_1)\cap \{u_2, u_3\}|\ge 1$. Without loss of generality, we may assume $u_2x_1\in E(G)$. Then $N(x_1)\cap N_{G-S-C_1}(u_2)=\emptyset$.
Note that $ f_2(u_1)=0,f_2(x_1)=1, \Delta f_2(u_1)=-\frac{1}{5}$, and
 \begin{align*}
     \Delta f_2 (x_1)&=\frac{-2-\mathbbm{1}_{u_3x_1\in E(G)}+(3-\mathbbm{1}_{u_3x_1\in E(G)}-|N(x_1)\cap N_{G-S-C_1}(\{u_2,u_3\})|)}{5}\\&=
     \frac{1-2\cdot \mathbbm{1}_{u_3x_1\in E(G)}-|N(x_1)\cap N_{G-S-C_1}(u_3)|}{5}
 \end{align*}
Hence,
 \[\kappa(u_1,x_1)\le \Delta f_2(u_1)-\Delta f_2(x_1)=\frac{-2+2\cdot \mathbbm{1}_{u_3x_1\in E(G)}+|N(x_1)\cap N_{G-S-C_1}(u_3)|}{5}.\]
We know that $|N(x_1)\cap N_{G-S-C_1}(u_3)|\le |N_{G-S-C_1}(u_3)|\leq 2$, and if $u_3x_1\in E(G)$ then $N(x_1)\cap N_{G-S-C_1}(u_3)=\emptyset$. This implies that $\kappa(u_1,x_1)\leq 0$, giving a contradiction. Therefore, we have that $u_i$ has at least two neighbors in $G-S-C_1$ for each $i\in [3]$, and hence at most three neighbors in $C_1$. 

We claim that each $u_i$ has exactly two neighbors in $C_1$. Suppose not. We may assume that $u_1$ has three neighbors in $C_1$. Then $u_1$ has no neighbors in $S$ and has exactly two neighbors in $G-S-C_1$. Let $N_{G-S-C_1}(u_1)=\{x_1,x_2\}$. It follows that $T=\{u_2,u_3, x_1,x_2\}$ is a cut set of $G$ and $C_T=G[C_1+u_1]$ is a component of $G-T$. Let $g:=f_{T,C_T}$. Consider the edge $u_1x_1$. Then $g(u_1)=-1, g(x_1)=0,$  $$\Delta g (u_1)=\frac{-(3-\mathbbm{1}_{N(u_1)\cap N_{C_1}(u_2)}-\mathbbm{1}_{N(u_1)\cap N_{C_1}(u_3)})+2}{5}=\frac{-1+\mathbbm{1}_{N(u_1)\cap N_{C_1}(u_2)}+\mathbbm{1}_{N(u_1)\cap N_{C_1}(u_3)}}{5},$$
$$\Delta g (x_1)=\frac{-1+(4-|N_{T}(x_1)|)}{5}=\frac{3-|N_T(x_1)|}{5}.$$
Thus, \[\kappa(u_1,x_1)\le \Delta g(u_1)-\Delta g(x_1)=\frac{-4+\mathbbm{1}_{N(u_1)\cap N_{C_1}(u_2)}+\mathbbm{1}_{N(u_1)\cap N_{C_1}(u_3)}+|N_T(x_1)|}{5}.\]
Since $\kappa(u_1,x_1)>0$, we obtain that $|N_T(x_1)|\ge 3$. This implies that $N_T(x_1)=\{x_2,u_2,u_3\}$. By symmetry, we consider the edge $u_1x_2$ and have that $N_T(x_2)=\{x_1,u_2,u_3\}$. This contradicts the planarity of $G$. It follows that every vertex in $S$ has exactly two neighbors in $C_1$. Observe that since $|S|=3$, there exists $u_i\in S$ such that $u_i$ has no neighbor in $S$. We may assume that $u_1$ has no neighbor in $S$, and let $N_{C_1}(u_1)=\{v_1, v_2\}$. Recall that $f_2=f_{S, C_1}$. Hence $f_2(u_1)=0, f_2(v_1)=f_2(v_2)=-1$, and $\Delta f_2(u_1)=\frac{1}{5}$. Observe that 
\begin{align*}
\Delta f_2(v_1)&=\frac{1+\mathbbm{1}_{N(v_1)\cap \{u_2,u_3\}\neq \emptyset}-(4-\mathbbm{1}_{N(v_1)\cap \{u_2,u_3\}\neq \emptyset}-|N(v_1)\cap N_{C_1}(S)|)}{5}\\&=\frac{-3+2\cdot \mathbbm{1}_{N(v_1)\cap \{u_2,u_3\}\neq \emptyset}+|N(v_1)\cap N_{C_1}(S)|}{5}.
\end{align*}
Thus, \[0<\kappa(v_1,u_1)\le \Delta f_2(v_1)-\Delta f_2(u_1)=\frac{-4+2\cdot \mathbbm{1}_{N(v_1)\cap \{u_2,u_3\}\neq \emptyset}+|N(v_1)\cap N_{C_1}(S)|}{5}.\]
Since $|N(v_1)\cap N_{C_1}(S)|\le 4-\mathbbm{1}_{N(v_1)\cap \{u_2,u_3\}\neq \emptyset}$, it follows that $N(v_1)\cap \{u_2,u_3\}\neq \emptyset$. By symmetry, we know that $N(v_2)\cap \{u_2,u_3\}\neq \emptyset$. We may assume $v_1$ is adjacent to $u_2$ and $v_2$ is adjacent to $u_3$. This implies that $u_2$ and $u_3$ have no common neighbor in $C_1$, otherwise $N_{C_1}(S)$ has size three and is a $3$-cut of $G$ such that $C_1-N_{C_1}(S)$ has fewer vertices than $C_1$ contradicting our choice of $S$. Thus, we may assume that $N_{C_1}(u_2)\backslash N(u_3)=\{v_3\}$ and $N_{C_1}(u_3)\backslash N(u_2)=\{v_4\}$, i.e., $N_{C_1}(S)=\{v_1,v_2,v_3,v_4\}$. Observe that $N(v_3)\cap S=\{u_2\}$, and $v_3$ has at least one neighbor in $N^2_{C_1}(S)$. Thus $\Delta f_2(v_3)\leq \frac{-1+1}{5}=0$. But $\Delta f_2(u_2)\geq \frac{-2+2}{5}=0$ implies that $\kappa(v_3,u_2)\le \Delta f_2(v_3)-\Delta f_2(u_2)\leq 0$, giving a contradiction. Therefore, we obtain that $G$ is $4$-connected.

Suppose $G$ is not $5$-connected. Let $S=\{u_1,u_2,u_3,u_4\}$ be a $4$-cut of $G$. By planarity and $3$-connectedness of $G$, we know that the four vertices in $S$ admit an order, say $u_1u_2u_3u_4$, such that $u_i$ can be adjacent to $u_{i-1}, u_{i+1}$ but cannot be adjacent to $u_{i+2}$ for each $i\in [4]$, where the indices are the same modulo $4$. We may choose $S=\{u_1,u_2,u_3,u_4\}$ (with the order $u_1u_2u_3u_4$) such that $G-S$ has a minimum-vertex component $C_1$. By the choice of $S$ and the $4$-connectedness of $G$, we know that $G[C_1+S]$ has no cut of size at most four. Hence, we have the following observations.
\begin{observation}\label{obs:5reg3}
For each $i\in [4]$,
\begin{itemize}
\item $u_i$ has at least two neighbors in $C_1$,
    \item $u_i$ and $u_{i+1}$ have at most one common neighbor in $C_1$, and
    \item $u_i$ and $u_{i+2}$ have no common neighbor in $C_1$.
\end{itemize}    
\end{observation}
Similarly, we show that every vertex of $S$ has at least two neighbors in $G-S-C_1$. Suppose not. We may assume that $u_1$ has exactly one neighbor $x_1$ in $G-S-C_1$. Observe that $S'=\{x_1, u_2, u_3, u_4\}$  is also a $4$-cut of $G$ and $C_1'=G[C_1+u_1]$ is one component of $G-S'$. Let $g_1:=f_{S',C_1'}$. Note that $g_1(u_1)=-1$ and $g_1(x_1)=0$. Consider the vertex $u_1$ and its neighbors. Note that $u_1u_3\notin E(G), N_{C_1}(u_1)\cap N_{C_1}(u_3)=\emptyset$, and $|N_{C_1}(u_1)\cap N_{C_1} (u_j)|\le 1$ for $j=2,4$. Then it follows that
\begin{align*}
\Delta g_1(u_1)&=\frac{ 1+|N(u_1)\cap \{u_2,u_4\}|-(4-|N(u_1)\cap \{u_2,u_4\}|-|N(u_1)\cap N_{C_1}(\{u_2,u_4\})|)}{5} \\&
=\frac{2|N(u_1)\cap \{u_2,u_4\}|+|N(u_1)\cap N_{C_1}(\{u_2,u_4\})|-3}{5}.
\end{align*}
Consider the vertex $x_1$ and its neighbors. Note that $S'=\{x_1, u_2,u_3, u_4\}$ has the order $x_1u_2u_3u_4$. Thus, $x_1u_3\notin E(G)$ and we have that
\begin{align*}
    \Delta g_1(x_1)=\frac{-1+(4-|N(x_1)\cap \{u_2,u_4\}|)}{5}=\frac{3-|N(x_1)\cap \{u_2,u_4\}|}{5}.
\end{align*}
Therefore, it follows that
$$\kappa (u_1,x_1)\le \Delta g_1(u_1)-\Delta g_1(x_1)=\frac{2|N(u_1)\cap \{u_2,u_4\}|+|N(u_1)\cap N_{C_1}(\{u_2,u_4\})|+|N(x_1)\cap \{u_2,u_4\}|-6}{5}. $$
Similarly, since $G$ is positively curved, we obtain that 
$u_1$ is adjacent to $u_2$ and $u_4$, and that $N(x_1)\cap \{u_2,u_4\}\neq \emptyset$. Let $S''=\{u_1,u_2,u_4\}\cup N_{C_1}(u_3)$. Since $u_1$ has at least two neighbors in $C_1$ and $N(u_1)\cap N_{C_1}(u_3)=\emptyset$, it follows that $C_1''=C_1-N_{C_1}(u_3)$ is not empty and it is a component of $G-S''$. We let $g_2:=f_{S'', C_1''}$. Observe that $g_2^{-1}(1)=N_{G-S''-C_1''}(S'')=\{u_3, x_1\}\cup N_{G-S-C_1}(\{u_2,u_4\})$. Consider the edge $u_1x_1$. Note that $g_2(u_1)=0$ and $g_2(x_1)=1$. Since $N(u_1)\cap S''=\{u_2,u_4\}$, we have that $\Delta g_2(u_1)=-\frac{1}{5}$. We next claim that $x_1$ is adjacent to $u_2$ and $u_4$. Suppose not. Since we know that $N(x_1)\cap \{u_2,u_4\}\neq \emptyset$, we may assume that $x_1u_2\in E(G)$ and $x_1u_4\notin E(G)$. Note that $x_1u_3\notin E(G)$. This implies that
\begin{align*}
\Delta g_2(x_1)&=\frac{-1-\mathbbm{1}_{x_1u_2\in E(G)}-\mathbbm{1}_{x_1u_4\in E(G)}+(4-\mathbbm{1}_{x_1u_2\in E(G)}-\mathbbm{1}_{x_1u_4\in E(G)}-|N(x_1)\cap N_{G-S-C_1} (\{u_2,u_4\})|)}{5}
\\&=\frac{3-2\cdot\mathbbm{1}_{x_1u_2\in E(G)}-2\cdot \mathbbm{1}_{x_1u_4\in E(G)}-|N(x_1)\cap N_{G-S-C_1} (\{u_2,u_4\})| }{5}
\\&=\frac{1-|N(x_1)\cap N_{G-S-C_1} (\{u_2,u_4\})|}{5},
\end{align*}
and \[\kappa(u_1,x_1)\le \Delta g_2(u_1)-\Delta g_2(x_1)=\frac{-2+|N(x_1)\cap N_{G-S-C_1} (\{u_2,u_4\})|}{5}.\]
Thus, we have that $|N(x_1)\cap N_{G-S-C_1} (\{u_2,u_4\})|\geq 3$. Observe that for each $j=2,4$, we know that $N(x_1)\cap N_{G-S-C_1} (u_j)$ has at most one vertex, otherwise we obtain a $3$-cut of $G$. It follows that $|N(x_1)\cap N_{G-S-C_1} (\{u_2,u_4\})|\leq 2$, a contradiction. Hence, $x_1u_2,x_1u_4\in E(G)$, and so $$\Delta g_2(x_1)=\frac{-1-|N(x_1)\cap N_{G-S-C_1} (\{u_2,u_4\})|}{5}.$$
Then $$0<\kappa(u_1,x_1)\le \Delta g_2(u_1)-\Delta g_2(x_1)= \frac{|N(x_1)\cap N_{G-S-C_1} (\{u_2,u_4\})|}{5}$$ implies that $N(x_1)\cap N_{G-S-C_1} (\{u_2,u_4\})\neq \emptyset$. Let $y_1\in N(x_1)\cap N_{G-S-C_1} (\{u_2,u_4\})$. Without loss of generality, we may assume that $y_1\in  N_{G-S-C_1} (u_2)$. Since each of $u_2,u_4$ has at most two neighbors in $G-S-C_1$ and $G$ is $4$-connected, we know that $y_1u_4\notin E(G)$ and $g_2^{-1}(1)=\{u_3, x_1, y_1\}\cup N_{G-S-C_1}(u_4)$ has exactly four vertices. We may assume that $N_{G-S-C_1}(u_4)\backslash\{x_1\}=\{z_1\}$, i.e., $g_2^{-1}(1)=\{u_3, x_1, y_1, z_1\}$. We consider the edge $u_2y_1$. Observe that 
$g_2(u_2)=0, g_2(y_1)=1$, 
$$\Delta g_2(u_2)=\frac{-(2-\mathbbm{1}_{N_{C_1} (u_2) \cap N_{C_1}(u_3)\neq \emptyset})+2}{5}=\frac{\mathbbm{1}_{N_{C_1} (u_2) \cap N_{C_1}(u_3)\neq \emptyset}}{5},$$
and $$\Delta g_2(y_1)=\frac{-1+(4-|N(y_1)\cap \{u_3,x_1,z_1 \} |)}{5}=\frac{2-|N(y_1)\cap \{u_3,z_1 \}|}{5}.$$
Thus, $$0<\kappa (u_2, y_1)\leq \Delta g_2(u_2)-\Delta g_2(y_1) =\frac{\mathbbm{1}_{N_{C_1} (u_2) \cap N_{C_1}(u_3)\neq \emptyset}+|N(y_1)\cap \{u_3,z_1 \} |-2}{5}$$ 
implies that $|N(y_1)\cap \{u_3,z_1 \} |\geq 2$. It follows that $\{x_1, u_3, z_1\}\subseteq N(y_1)$. This implies that either $\{x_1, y_1, z_1\}$ or $\{u_3, y_1, z_1\}$ is a $3$-cut of $G$, a contradiction. Therefore, each vertex of $S$ has at least two neighbors in $G-S-C_1$, and so has at most three neighbors in $C_1$.

We claim that each vertex of $S$ has exactly two neighbors in $C_1$. Suppose not. We may assume $u_1$ has three neighbors in $C_1$, and hence has exactly two neighbors in $G-S-C_1$, say $x_1, x_2$. Recall that $S''=\{u_1,u_2,u_4\}\cup N_{C_1}(u_3)$, $C_1''=C_1-N_{C_1}(u_3)$, and $g_2=f_{S'', C_1''}$.  Consider the edges $u_1x_1, u_1x_2$. We know that $g_2(u_1)=0, g_2(x_1)=g_2(x_2)=1$, and 
$\Delta g_2(u_1)=\frac{-1}{5}$. Since $G$ is $4$-connected, $N(x_i)\cap \{u_2,u_4\}$ has at most one vertex for each $i\in [2]$, and $x_1,x_2$ cannot have a common neighbor in $\{u_2, u_4\}$. Since $G$ is positively curved, we obtain that $N(x_i)\cap \{u_2,u_4\}\neq \emptyset$, and $N(x_i)\backslash \{u_1, u_2, u_4\}\subseteq g_2^{-1}(1)$. We may assume that $x_1u_2\in E(G)$ and $x_2u_4\in E(G)$. Note that $g_2^{-1}(1)=\{u_3, x_1, x_2\}\cup N_{G-S-C_1}(\{u_2, u_4\})$. Observe that $x_iu_3\notin  E(G)$ for each $i\in [2]$, otherwise $u_2$ has a unique neighbor, $x_1$, in $G-S-C_1$ or $G$ has a $3$-cut. Since $x_1$ has at most one neighbor in $N_{G-S-C_1}(u_2)$ and $N(x_1)\backslash \{u_1, u_2\}\subseteq g_2^{-1}(1)=\{u_3, x_1, x_2\}\cup N_{G-S-C_1}(\{u_2, u_4\})$, we know that $x_1$ has at least one neighbor, say $y_1$, in $N_{G-S-C_1}(u_4)\backslash \{x_2\}$. Similarly, we obtain that $x_2$ has at least one neighbor in $N_{G-S-C_1}(u_2)\backslash \{x_1\}$. By planarity, it follows that $y_1$ is the common neighbor of $x_2$ and $u_2$ in $G-S-C_1-x_1$. But this implies that $N(x_1)\cap g_2^{-1}(1)\subseteq \{x_2, y_1\}$, and so $\Delta g_2(x_1)\ge \frac{-1}{5}$, contradicting that $\kappa (u_1, x_1)>0$. Thus, $u_i$ has exactly two neighbors in $C_1$ for each $i\in [4]$.

Next we show that for each vertex $v\in N_{C_1}(S)$, $v$ has exactly two neighbors in $S$. Note that $1\leq |N_S(v)|<3$ for each $v\in N_{C_1}(S)$. Suppose $N_{C_1}(u_1)=\{v_1, v_2\}$ and assume that $N_S(v_1)$ contains only one vertex $u_1$. Consider the $1$-Lipschitz function $g=f_{S,C_1}$ and the edge $v_1u_1$. Note that $g(v_1)=-1, g(u_1)=0$ and $u_1$ has at least two neighbors in $G-S-C_1$. Hence $\Delta g(u_1)\ge 0$. Since $N(v_1)\cap S=\{u_1\}$, we have that 
\[\Delta g(v_1)=\frac{1-(4-|N(v_1)\cap N_{C_1}(S)|)}{5}=\frac{-3+|N(v_1)\cap N_{C_1}(S)|}{5}.\]
Thus, \[0 < \kappa(v_1, u_1)\le \Delta g(v_1)-\Delta g(u_1)\le \frac{-3+|N(v_1)\cap N_{C_1}(S)|}{5}\]
implies that $|N(v_1)\cap N_{C_1}(S)|=4$. By our choice of $S$, we have that $|N(v_1)\cap N_{C_1}(u_3)|\leq 2$ and $|N(v_1)\cap N_{C_1}(u_j)|\leq 1$ for $j\in \{2,4\}$.  Note that either $v_2u_2\notin E(G)$ or $v_2u_4\notin E(G)$. Without loss of generality, we may assume that $v_2u_2\notin E(G)$. Suppose $v_2u_4\in E(G)$. Then we know that $|N(v_1)\cap N_{C_1}(u_3)|= 2$, $|N(v_1)\cap N_{C_1}(u_2)|=1$ and $N(v_1)\cap N_{C_1}(u_4)=\{v_2\}$. This implies that $u_2, u_3, v_1$ and one vertex in $N(v_1)\cap N_{C_1}(u_3)$ is a $4$-cut of $G$, a contradiction. We may now assume that $v_2u_4\notin E(G)$. Since $G$ is $4$-connected, we have that $|N(v_1)\cap N_{C_1}(u_2)|+|N(v_1)\cap N_{C_1}(u_4)|\leq 1$. Hence we obtain that $v_1v_2\in E(G), |N(v_1)\cap N_{C_1}(u_2)|+|N(v_1)\cap N_{C_1}(u_4)|= 1$, and $|N(v_1)\cap N_{C_1}(u_3)|= 2$. We may assume that $|N(v_1)\cap N_{C_1}(u_2)|=1$. Similarly, we obtain a $4$-cut of $G$ formed by  $u_2, u_3, v_1$ and one vertex in $N(v_1)\cap N_{C_1}(u_3)$, giving a contradiction. Hence, each vertex in $ N_{C_1}(S)$ has exactly two neighbors in $S$, and this implies that $N_{C_1}(S)$ contains exactly four vertices, contradicting our choice of $S$. Therefore, $G$ is $5$-connected and the proof is complete.
\end{proof}
\begin{lemma}\label{lem:5regplanartriangulation}
    Every $5$-connected positively curved $5$-regular planar graph is a planar triangulation.
\end{lemma}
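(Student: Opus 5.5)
Suppose for contradiction that $G$ is a $5$-connected positively curved $5$-regular plane graph that is not a triangulation. Fix a face $F$ bounded by a cycle $C=u_1u_2\ldots u_ku_1$ with $k\ge 4$; since $G$ is $3$-connected, faces are bounded by cycles. The plan is to examine an edge $u_1u_2$ of $C$ through the coupling criterion of Theorem~\ref{thm:liluthm}: $\kappa(u_1,u_2)>0$ holds if and only if there is a bijection $\phi: N(u_1)\setminus N[u_2]\to N(u_2)\setminus N[u_1]$ with total cost at most $5$.

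The main input from $5$-connectivity is that $G$ has no separating cycles of length at most $4$. First I will show that $u_1$ and $u_2$ have at most one common neighbor. Two common neighbors $w,w'$ would give triangles $u_1u_2w$ and $u_1u_2w'$. One side of $u_1u_2$ is the face $F$, which is not a triangle, so at least one of these triangles is not facial. A non-facial triangle is separating, and this contradicts $5$-connectivity. So $|N(u_1)\cap N(u_2)|\le 1$, and each side of the coupling has at least $3$ vertices.

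Next I will bound which pairs in the coupling can have distance $1$. Suppose $a\in N(u_1)\setminus N[u_2]$ and $b\in N(u_2)\setminus N[u_1]$ are adjacent. Then $u_1u_2ba$ is a $4$-cycle. It cannot be separating, so it must be facial, exactly as in Corollary~\ref{cor:4-cycle}. Only two faces contain $u_1u_2$, namely $F$ and the face $F'$ on the other side. Hence at most two pairs have distance $1$: one pair $(u_k,u_3)$ if $k=4$, and one pair coming from $F'$ if $F'$ is a $4$-face. All other pairs have distance at least $2$.

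The counting then splits by the number of common neighbors. If $u_1,u_2$ have no common neighbor, each side has $4$ vertices, and the cost is at least $1+1+2+2=6>5$, a contradiction. If they have exactly one common neighbor $w$, then $F'$ is the triangle $u_1u_2w$, so at most one pair (coming from $F$ when $k=4$) has distance $1$. The cost is then at least $1+2+2=5$, with equality forcing $k=4$ and the other two pairs at distance exactly $2$. When $k\ge5$ the cost is at least $6$, a contradiction.

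So I expect the main obstacle to be the equality case $k=4$, where $C$ is a $4$-face all of whose edges border triangles and every edge has curvature exactly the minimum allowed. In this case I would write $x_i$ for the apex of the triangle on $u_iu_{i+1}$. Each $u_i$ has one further neighbor $y_i$. For the coupling of $u_1u_2$, the remaining vertices $\{x_4,y_1\}$ must be matched to $\{x_2,y_2\}$ with both pairs at distance exactly $2$, via common neighbors. Chords and separating short cycles are excluded by $5$-connectivity. I would then use planarity: the fan around $u_1$ is $u_4,x_4,y_1,x_1,u_2$ in some rotational order. Combined with the distance-$2$ requirements on all four edges of $C$, this should force a separating $4$- or $5$-cycle. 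A likely candidate is one through $x_1$, a common neighbor, and $x_3$, or a cycle through $y_i,y_{i+1}$ and their common neighbor that encloses a small region, contradicting $5$-connectivity. If that local argument gets messy, my fallback is to use the function $f_{S,C}$ from this section with $S$ equal to the neighborhood ring of $C$. This should give $\kappa\le 0$ directly on an edge $u_ix_i$ by a Laplacian computation like the ones in Theorem~\ref{thm:5reg5conn}.
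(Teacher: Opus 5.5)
\textbf{There is a genuine gap: the tight $4$-face case is not proved.} Your reduction up to that point is correct. It is the coupling-side dual of the paper's test functions $f$ and $f_1$. An adjacent pair across $u_1u_2$ closes a $4$-cycle that must be facial, so each face at $u_1u_2$ contributes at most one distance-$1$ pair. This disposes of $k\ge 5$ and of edges of a $4$-face without a common neighbor.

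The gap is the remaining case: $C=u_1u_2u_3u_4$ with facial triangles $u_iu_{i+1}x_i$, and every coupling of cost exactly $5$. This case carries the real content of the lemma. There you only list candidate cycles, and both concrete suggestions fail as stated:
\begin{itemize}
\item A separating $5$-cycle is no contradiction, since a $5$-connected graph may have $5$-cuts.
\item The fallback $f_{S,C}$ with $S=N(V(C))$ points the wrong way. The component is $\{u_1,\dots,u_4\}$, so $\Delta f(u_1)=3/5$ while $\Delta f(x_1)\le 1/5$. The bound $\kappa(u_1,x_1)\le \Delta f(u_1)-\Delta f(x_1)$ is therefore positive and gives nothing.
\end{itemize}

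\textbf{Missing idea.} Play the tight conditions on two consecutive edges against each other, using a $5$-cycle only as a Jordan curve. Your observation about $u_1u_2$ is right: $u_4\mapsto u_3$, and $\{x_4,y_1\}$ is matched to $\{x_2,y_2\}$ at distance exactly $2$. Apply the same observation to $u_2u_3$.
\begin{itemize}
\item The edge $u_2u_3$ gives a vertex $z$ adjacent to $x_1$ and to some $t\in\{x_3,y_3\}$. Small-separator arguments show $z\notin V(C)\cup N(V(C))$; this is the paper's remark $z\in N^2(V(C))$.
\item The edge $u_1u_2$ gives a path $x_4z's$ with $s\in\{x_2,y_2\}$.
\item The rotation at $u_2$ is $u_3,u_1,x_1,y_2,x_2$. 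So the cycle $u_2x_1ztu_3$ has $u_1$, hence $x_4$, strictly on one side, and $x_2,y_2$ strictly on the other.
\item Therefore $z'$ lies on this cycle. Separating triangles and $4$-cycles exclude every choice except $z'=z$.
\item But then $u_1x_4zx_1$ is a $4$-cycle separating $y_1$ from $u_2$, contradicting $5$-connectivity.
\end{itemize}
This is exactly the paper's step that some $x_{i_0}$ is at distance $3$ from both $x_{i_0+2}$ and $y_{i_0+2}$. After that, the edge $u_{i_0+1}u_{i_0+2}$ has minimum coupling cost at least $6$, which the paper certifies with the test function $g$.
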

\begin{proof}
Let $G$ be a positively curved $5$-regular planar graph such that $G$ is $5$-connected. Suppose $G$ is not a planar triangulation. Then $G$ has a face, whose boundary $C$ is a cycle of length at least four. We first show that $C$ is a $4$-cycle. Suppose $|V(C)|\ge 5$. Let $u_1u_2$ be an edge in $C$. Let $D$ denote the other facial cycle containing $u_1u_2$ in $G$, and let $x_i\neq u_{3-i}$ denote the other neighbor of $u_i$ in $D$. Since $G$ is $5$-connected, we know that $u_1, u_2$ can have at most one common neighbor, and if $N(u_1)\cap N(u_2)\neq \emptyset$ then $x_1=x_2$ and $D=u_1u_2x_1u_1$ is a triangle. Hence, $N(u_1)\backslash\{u_2,x_1\}$ and $N[u_2]\backslash\{u_1, x_1\}$ are disjoint and there is no edge between these two sets. We consider the edge $u_1u_2$ and define a $1$-Lipschitz function $f: N[u_1]\cup N[u_2]\to \mathbb{Z}$ such that
$$f(x):=\begin{cases}
               
                -1 & \textrm{ if $x\in N(u_1)\backslash\{u_2,x_1\}$},\\
                0 & \textrm{ if $x\in \{u_1,x_1\}$},\\
               1 & \textrm{ if $x\in N[u_2]\backslash\{u_1, x_1\}$}.
                 \end{cases}
$$
It follows that $\Delta f(u_1)=\frac{-3+1}{5}=\frac{-2}{5}$ and $\Delta f(u_2)\geq \frac{-2}{5}$. Hence, $\kappa(u_1, u_2)\leq \Delta f(u_1)-\Delta f(u_2)\le 0$, giving a contradiction. Thus, we may assume $C$ has length four. 

Let $C=u_1u_2u_3u_4u_1$. For convenience, all the indices are the same modulo $4$. Since $G$ is $5$-connected, we know $|N(u_i)\cap N(u_{i+1})|\leq 1$ and $N(u_i)\cap N(u_{i+2})=\{u_{i-1}, u_{i+1}\}.$ We claim that $u_i$ and $u_{i+1}$ have exactly one common neighbor for each $i\in [4]$. Assume that $N(u_1)\cap N(u_2)=\emptyset$. Let $D$ ($D\neq C$) be the other facial cycle containing the edge $u_1u_2$, and let $N_{D}(u_i)=\{u_{3-i}, x_i\}$ for each $i\in[2]$. Consider the following $1$-Lipschitz function $f_1: N[u_1]\cup N[u_2]\to \mathbb{Z}$ such that
$$f_1(x):=\begin{cases}
               
                -1 & \textrm{ if $x\in N(u_1)\backslash\{u_2,x_1, u_4\}$},\\
                0 & \textrm{ if $x\in \{u_1,x_1, u_4\}$},\\
               1 & \textrm{ if $x\in N[u_2]\backslash\{u_1\}$}.
                 \end{cases}$$
Hence, we have $f_1(u_1)=0, f_1(u_2)=1, \Delta f_1(u_1)=\frac{-1}{5}$ and $\Delta f_1(u_2)=\frac{-1}{5}$, contradicting that $\kappa(u_1, u_2)>0$. Therefore, $u_i$ and $u_{i+1}$ have exactly one common neighbor, say $x_i$, in $G$ for each $i\in [4]$. Let $y_i$ denote the unique vertex in $N(u_i)\backslash (V(C)\cup \{x_i, x_{i-1}\})$. 

Next, we show that there exists some $i_0\in [4]$ such that the distance between $x_{i_0}$ and $\{x_{i_0+2}, y_{i_0+2}\}$ is three.
Suppose $d(x_1,\{y_3, x_3\})<3$. Assume that $d(x_1,y_3)<3$. Since $G$ is $5$-connected, we know $x_1y_3\notin E(G)$ and so $d(x_1,y_3)\geq 2$. Hence, we know that the distance of $x_1$ and $y_3$ is two, and $N(x_1)\cap N(y_3)\neq \emptyset$. We may assume that $z\in N(x_1)\cap N(y_3)$. Observe that $z\in N^2(V(C))$. Then we obtain that $x_4$ is not adjacent to $z$, otherwise $\{x_4, u_1, x_1, z\}$ is a $4$-cut of $G$. This implies that the distance between $x_4$ and $\{x_2, y_2\}$ is three. Similarly, $d(x_4,\{y_2, x_2\})=3$ if $d(x_1, x_3)<3$. Thus, we obtain that $d(x_{i_0}, \{x_{i_0+2}, y_{i_0+2}\})=3$ for some $i_0\in [4]$. Without loss of generality, we may assume that $d(x_4,\{y_2, x_2\})=3$. Now we consider the edge $u_1u_2$ and the following $1$-Lipschitz function $g: N[u_1]\cup N[u_2]\to \mathbb{Z}$ with
$$g(x):=\begin{cases}
               
                -1 & \textrm{ if $x=x_4$},\\
                0 & \textrm{ if $x\in \{u_1, y_1, u_4\}$},\\
               1 & \textrm{ if $x\in \{u_2, x_1, u_3\}$},\\
               2 & \textrm{ if $x\in \{x_2,y_2\}$}.
                 \end{cases}$$
Note that $g(u_1)=0, g(u_2)=1, \Delta g(u_1)=\frac{1}{5}$, and $\Delta g (u_2)=\frac{1}{5}$. It follows that $\kappa(u_1, u_2)\leq \Delta g(u_1)-\Delta g(u_2)=0$, giving a contradiction. Thus, $G$ is a planar triangulation.
\end{proof}

As the icosahedral graph is the only $5$-regular planar graph which is a planar triangulation, Theorem~\ref{thm:5reg5conn} and Lemma~\ref{lem:5regplanartriangulation} together prove Theorem~\ref{d5}. 

\section{Future work}
In this paper, we have proven that every positively curved $d$-regular graph has diameter at most $2d-2$, improving on the upper bound $2d$ obtained by Lin, Lu and Yau. We believe that a stronger bound should hold.

\begin{conjecture}\label{con:dregdiamconj}
Let $G$ be a positively curved $d$-regular graph. Then
\[\diam(G) \le d + C,\]
where $C$ is some absolute constant independent of $d$.
\end{conjecture}
In fact, we do not know any example of a positively curved $d$-regular graph $G$ with $\diam(G) > d$, so it is possible that the statement of Conjecture~\ref{con:dregdiamconj} is true with $C=0$. The hypercubes $Q_d$ show that a positively curved $d$-regular graph can have diameter $d$.

\section*{Acknowledgment}

We thank Teegan Bailey for helpful discussions in an early stage of the paper. During the preparation of this manuscript, the authors used ChatGPT solely to help identify typographical and grammatical errors. AI-assisted tools were not used to generate the mathematical ideas, results, proofs, or arguments presented in this paper.

\printbibliography
\end{document}